\documentclass[12pt, reqno]{amsart}
\usepackage[english]{babel}
\usepackage[utf8]{inputenc}
\usepackage{lipsum}
\usepackage{dsfont}
\usepackage{mathrsfs}
\usepackage{stix}
\usepackage{fullpage}
\usepackage{mathtools}
\usepackage{amsmath}
\usepackage{bbm}
\usepackage{amsfonts}
\usepackage{tikz}
\usepackage{tikz-cd}
\usepackage{graphicx}
\usetikzlibrary{decorations.pathreplacing,angles,quotes}
\usetikzlibrary{arrows,chains,positioning,scopes,quotes}
\newtheorem{defi}{Definition}[section]

\newtheorem{lema}[defi]{Lemma}
\newtheorem{teo}[defi]{Theorem}
\newtheorem{rem}[defi]{Remark}

\newtheorem{coro}[defi]{Corollary}
\newtheorem{pro}[defi]{Proposition}
\newtheorem*{rem*}{Remark}

\newcommand{\D}{\mathbb{D}}
\newcommand{\C}{\mathbb{C}}
\newcommand{\Q}{\mathbb{Q}}

\newcommand{\R}{\mathbb{R}}
\newcommand{\N}{\mathbb{N}}
\newcommand{\Z}{\mathbb{Z}}

\newcommand{\1}{\mathds{1}}
\newcommand{\G}{\mathbb{G}}

\newcommand{\smallstar}{\scalebox{0.8}{$\, \star \, $}}
\newcommand{\g}{\mathfrak{g}}

\newcommand{\interior}[1]{%
  {\kern0pt#1}^{\mathrm{o}}%
}

\usepackage[colorlinks=false]{hyperref}
\renewcommand\eqref[1]{(\ref{#1})} 
\begin{document}

\title[$L^r$-Multipliers on compact $p$-adic Lie groups. ]
 {$L^r$-Multipliers on compact $p$-adic Lie groups }

\author{
  J.P. Velasquez-Rodriguez
}

\newcommand{\Addresses}{{
  \bigskip
  \footnotesize

 J.P. Velasquez-Rodriguez, \textsc{Departamento de ciencias naturales y matematicas, Pontificia Universidad Javeriana, Cali-Colombia}\par\nopagebreak
  \textit{E-mail address:} \texttt{juanpablovr2025@gmail.com / juanpablo.velasquez@javerianacali.edu.co}}

}


\subjclass[2020]{Primary; 22E35, 58J4 ; Secondary: 20G05, 35R03, 42A16. }

\keywords{Pseudo-differential operators, p-adic Lie groups, representation theory, compact groups, Vladimirov-Taibleson operator}

\date{\today}
\begin{abstract}
Let $p$ be a prime number, and let $\mathbb{G}$ be a compact $p$-adic Lie group. This work provides multiplier theorems for invariant operators on $\mathbb{G}$ acting on $L^r_\alpha(\mathbb{G})$, $1<r<\infty$, $\alpha>0$, in terms of the Ruzhansky-Turunen difference operators and Saloff-Coste's condition. As an application, a Littlewood-Paley decomposition is proven, together with the $L^r$-boundedness of bounded functions of the Vladimirov-Taibleson operator on compact Vilenkin groups.  
\end{abstract}
\maketitle
\tableofcontents
\section{Introduction}
The problem of $L^r$-multipliers is a fascinating and challenging open question that has captivated functional analysts for decades. Initially explored within the framework of Euclidean spaces, this problem has been the focus of extensive research aimed at establishing sufficient conditions (necessary and sufficient would be cooler!) for the $L^r$-boundedness of operators. More precisely, the problem considers a special class of linear operators defined through Fourier analysis, often called \emph{Fourier multipliers}. In $\mathbb{R}^d$, these operators can be expressed as 
\[
T_\sigma f(x) = \int_{\mathbb{R}^d} \sigma(\xi) \hat{f}(\xi) e^{2 \pi i x \cdot \xi} \, d\xi, \quad f \in L^1(\mathbb{R}^d) \cap L^2(\mathbb{R}^d),
\] where $\widehat{f}$ is the Fourier transform of $f$, and the function $\sigma: \mathbb{R}^d \to \mathbb{C}$ is often referred to as the \emph{symbol} of the operator. While the continuity of these operators is straightforward to characterize in terms of the symbol for the Hilbert space $L^2(\mathbb{R}^d)$, determining their boundedness on the Banach space $L^r(\mathbb{R}^d)$, $r \neq 2$, poses a significant challenge. So, as it is natural for any interesting problem, numerous authors have investigated this issue, providing partial answers by imposing various conditions on the behaviour of the symbol. Over time, with advancements in the theory of smooth manifolds and topological groups, these results have been extended to a variety of contexts where some notion of multipliers exists, particularly in spaces that support a version of the Fourier transform. 

In this paper, we focus on the framework of totally disconnected topological spaces, with particular emphasis on the compact case, exploring in detail compact $p$-adic Lie groups, where $p$ is some prime number. Our goal is to establish several multiplier theorems in this setting, and discuss along the way some related problems. This line of research forms part of a broader effort to extend the theory of Vladimirov-type operators \cite{Bendikov2014, Dragovich2017} beyond its classical origins on the $p$-adic numbers $\Q_p$, toward $p$-adic domains \cite{zuniga1, zuniga2, Zuniga3} and $p$-adic manifolds \cite{bradley1, bradley2, bradley3}. Despite the substantial advances achieved in recent years, even the fundamental question of what should constitute an appropriate notion of “Laplacian” on a general $p$-adic manifold or domain remains unsettled. The situation is even more subtle in the case of directional Vladimirov operators, where the lack of a canonical geometric structure makes the formulation of a satisfactory definition particularly elusive.

Nevertheless, when the underlying space is endowed with additional algebraic structure, like that of a locally compact group, one gains significant analytical advantages over the general manifold setting, like how it becomes possible to study the hypoellypticity of differential operators on certain Lie groups through their representation theory \cite{Rockland1978}. In this spirit, we follow a strategy similar to that developed in \cite{Kitada1987, Onneweer1985, Onneweer1989}, where the authors exploit the structure of certain classes of locally compact abelian groups to investigate the boundedness of Fourier multipliers. In particular, several Vladimirov-type operators can be realized as Fourier multipliers with respect to the standard group Fourier transform, just like in \cite{Rockland1978, Ruzhansky2010}, a fact that enables us to derive the multiplier theorems presented in this work. This is a first step towards a better study of the properties of operators like the Vladimirov sub-Laplacian \cite{VelasquezRodriguez2025}, and more general translation-invariant operators on (non-commutative) $p$-adic Lie groups.

The $p$-adic side of the theory of pseudo-differential operators can be regarded as a natural counterpart to the more classical theory developed for real groups. Structural results for compact groups (see \cite{Hofmann2020}) show that every compact group is either isomorphic to, or isomorphic to something close to, the product of a profinite group and a locally connected group, the latter often being a Lie group. In this sense, the study of analysis on compact groups naturally splits into two complementary directions: one governed by the geometry of Lie groups and real manifolds, and another driven by totally disconnected, profinite structures. This dichotomy resonates with Ostrowski's theorem, which classifies all absolute values on $\mathbb{Q}$ into the Archimedean one and the non-Archimedean $p$-adic ones. While the analytical theory associated with real Lie groups forms the mainstream setting \cite{Rockland1978, Ruzhansky2010}, the $p$-adic world provides a parallel framework with markedly different geometric and analytic features. One of the most striking differences, and the one most relevant for our purposes, concerns the structure of the unitary dual \cite{Boyarchenko2008, Howe1977}. On the $p$-adic side, the duals of certain groups admit parametrizations by special trees, which naturally carry the structure of an ultrametric space. Consequently, once our multiplier theorems are formulated in terms of difference operators, they can be interpreted as estimates for differences of mappings (symbols) defined on ultrametric trees. These trees come as sub-trees of ($d$-dimensional products of) the familiar tree depiction of the unitary dual $\widehat{\mathbb{Z}}_p$ of $\mathbb{Z}_p$, often realized as the Pr{\"u}fer group.
\begin{figure}[h]
\caption{The dual group $\widehat{\Z}_p$, $p=3$, as an infinite tree. Here we can see the first 3 ultrameric balls, which produce finite trees.       }
\centering
\includegraphics[width=0.8\textwidth]{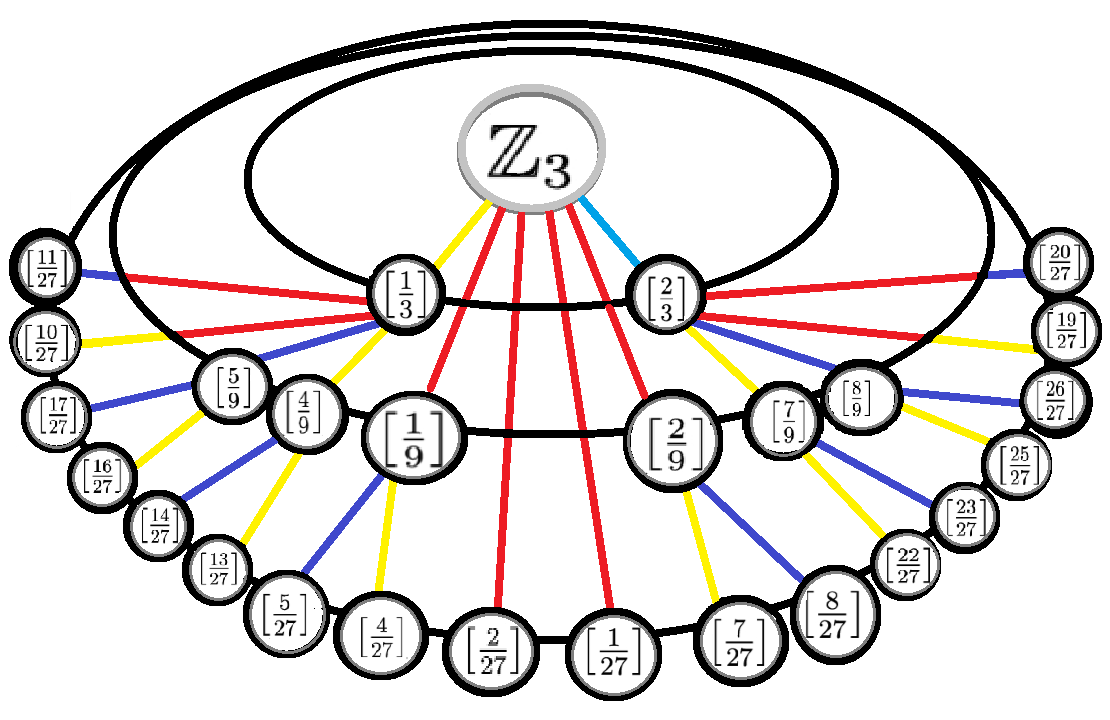}
\end{figure}

\begin{rem}
    The line of reasoning we will be following here \cite{Kitada1987, Onneweer1985, Onneweer1989} is kind of old, but we will mix it up with some Ruzhansky-Turunen theory \cite{Ruzhansky2010, Ruzhansky2015}, and some of Fischer's ideas  \cite{Fischer2015, Fischer2020} to give it and update. The result is meant to be a collection of tools to study Vladimirov-type operators on (non-commutative) $p$-adic Lie groups, appearing here only in their compact form. 
\end{rem}

\begin{rem}
    For the rest of this work $p$ will denote a (usually big enough!) prime number. 
\end{rem}
\subsection{Multipliers on Abelian Vilenkin Groups}
This work is motivated by two main sources. The first is the theory of pseudo-differential operators on Lie groups, as developed for example in \cite{Rockland1978, Ruzhansky2010}. The second originates in the study of Fourier analysis on abelian Vilenkin groups, following the ideas of \cite{SaloffCoste1986} and the references therein. Throughout this paper we will continuously combine techniques from both contexts, and the multiplier theorems we establish here come as a result of mixing them together, while keeping as our primary inspiration the works of Onnewer, Taibleson, and Saloff-Coste on abelian Vilenkin groups \cite{Onneweer1985, SaloffCoste1986}. 

\begin{defi}\normalfont
We say that a l.c.a. group $G$ is an \emph{abelian Vilenkin group} if $(G, \smallstar)$ is endowed with a strictly decreasing sequence of compact open subgroups $\mathcal{G} := \{G_n\}_{n \in \mathbb{Z}}$ such that
\begin{enumerate}
    \item It holds:
    \[
    2 \leq m_n := |G_n / G_{n+1}| < \infty,
    \]
    for every $n \in \mathbb{Z}$.
    \item 
    \[
    G = \bigcup_{n \in \mathbb{Z}} G_n, \quad \text{and} \quad \bigcap_{n \in \mathbb{Z}} G_n = \{e\}.
    \]
    \item The sequence $\{G_n\}_{n \in \mathbb{Z}}$ forms a basis of neighborhoods at $e \in G$.
\end{enumerate}
\end{defi}

Vilenkin groups are metrizable $0$-dimensional topological groups whose dual $\widehat{G}$, in the sense of Pontryagin, is a metrizable topological group too, and also a Vilenkin group with the sequence of sub-groups $$ G_n^\bot:= \{ \chi \in \widehat{G} \, : \, \chi(x)=1, \, \, \text{for all} \, x \in G_n  \}.$$ 
The Fourier transform in this setting maps functions on \(G\) into functions on its dual group \(\widehat{G}\), and the Plancherel theorem gives a connection between $L^2$-norms in both sides. As shown by Taibleson and Saloff-Coste~\cite{SaloffCoste1986, Taibleson1970}, this viewpoint allows one to exploit equivalent norms defined on the Fourier side, and we want to show how the same works on non-abelian groups, by extending to the noncommutative setting two key arguments arising in the commutative theory:

\begin{itemize}
    \item[(i)] The weighted $L^2$-space $L^2 (G, |\cdot|_{\mathcal{G}}^\alpha)$, where $|\cdot|_{\mathcal{G}}$ is the ultrametric of the group associated to $\mathcal{G}$, possess and equivalent norm in terms of difference operators applied to the Fourier transform. Using that and some sort of “product rule” we can get multiplier theorems on $L^2 (G, |\cdot|_{\mathcal{G}}^\alpha)$.
    \item[(ii)] Assume we are given a suitable Calderón--Zygmund decomposition, and a method to establish the weak-type \((1,1)\) boundedness of a certain operator. Then, by applying interpolation arguments, we can extend this boundedness to all the intermediate weighted \(L^{r}\)-spaces.
\end{itemize}

So, for the first part, we have the following equivalence proved in \cite{SaloffCoste1986}:

$$\|f\|_{L^2_\alpha(G)} := \Big( \int_{G} |f(x)|^2 |x|^\alpha_{\mathcal{G}} dx \Big)^{1/2} \asymp \Big( \int_{\widehat{G}} \int_{\widehat{G}} \frac{|\triangle_\eta \widehat{f}(\xi)|^2}{|\eta|^{1+\alpha}_{\widehat{G}}}  d\eta d \xi  \Big)^{1/2},$$where the Pontryagin dual $\widehat{G}$ of the abelian Vilenkin group $G$ is a metrizable group, with ultrametric $|\cdot|_{\widehat{G}}$, and the difference operator is the usual difference
\[
\triangle_\eta \sigma(\xi) = \sigma(\eta + \xi) - \sigma(\xi).
\]This is simply an equivalence of $L^2_\alpha$-norms, which turns out to be quite convenient for understanding multipliers,
and naturally leads to the following theorem: 
 
\begin{teo}[Saloff-Coste, \cite{SaloffCoste1986}]\label{teosaloff}Let $G$ be a locally compact abelian Vilenkin group with an
order bounded sequence of compact open subgroups $\{G_n\}_{n \in \mathbb{Z}}$. Let $\widehat{G}$ be its unitary dual endowed
with the sequence of compact open subgroups $\{\widehat{G}_n\}_{n \in \mathbb{Z}}$ associated to $\{G_n\}_{n \in \mathbb{Z}}$. Let $\sigma \in L^\infty(\widehat{G})$.
Suppose there exists $B, \varepsilon > 0$ such that for all $n \in \mathbb{Z}$
\[
\int_{|\xi|_{\widehat{G}} < |G_n^\bot|} \int_{G_n^\bot \setminus G_{n-1}^\bot}
\frac{|\sigma(\eta + \xi) - \sigma(\eta)|^2}{|\xi|^{2+\varepsilon}_{\widehat{G}}} \, d\xi \, d\eta \leq B^2 |G_n^\bot|^{-\varepsilon}.
\]
Then $T_\sigma$ extends to a bounded operator on $L^r(G)$ for $1 < r < \infty$.
\end{teo}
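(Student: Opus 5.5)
The plan is a Calderón--Zygmund argument: prove that $T_\sigma$ is of weak type $(1,1)$, then use Marcinkiewicz interpolation and duality. On $L^2(G)$ there is nothing to do, since Plancherel gives $\|T_\sigma\|_{L^2\to L^2}=\|\sigma\|_{L^\infty}$. Interpolating the weak $(1,1)$ bound with this $L^2$ bound covers $1<r\le 2$. The adjoint $T_\sigma^*=T_{\overline{\sigma}}$ satisfies the same hypothesis, because the integrand only involves $|\sigma(\eta+\xi)-\sigma(\eta)|$ (after the change of variables $\xi\mapsto-\xi$, which preserves $|\cdot|_{\widehat{G}}$). Duality then covers $2\le r<\infty$.

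For weak type $(1,1)$ we use the standard Calderón--Zygmund decomposition on $G$. Its balls are cosets of the $G_n$. These are nested: two of them are either disjoint or one contains the other. The order-bounded hypothesis on $\sup m_n$ gives the doubling property. For $f\in L^1$ and $\lambda>0$ we write $f=g+\sum_j b_j$, where $\|g\|_\infty\lesssim\lambda$, each $b_j$ is supported in a coset $x_j+G_{n_j}$ and has mean zero, and $\sum_j|x_j+G_{n_j}|\le\|f\|_1/\lambda$. The good part is handled by the $L^2$ bound. For the bad part we discard the union of the cosets themselves; ultrametricity means no dilate is needed. It then suffices to prove the Hörmander-type condition
\[
\sup_{n}\ \sup_{y\in G_n}\int_{G\setminus G_n}|K(x-y)-K(x)|\,dx\le C,
\]
where $K$ is the distributional kernel of $T_\sigma$. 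Since the problem is not regular, we apply this to the regularized kernels $K_N=K*\mathbb{1}_{G_N}/|G_N|$, with constants uniform in $N$, and pass to the limit.

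The main obstacle is deriving this kernel condition from the integral hypothesis on $\sigma$. First decompose $\sigma=\sum_k\sigma_k$, with $\sigma_k=\sigma\cdot\mathbb{1}_{G_k^\perp\setminus G_{k-1}^\perp}$. In the kernel this corresponds to Littlewood--Paley pieces $K_k=K*(\Delta_k)$, where $\Delta_k=|G_k|^{-1}\mathbb{1}_{G_k}-|G_{k-1}|^{-1}\mathbb{1}_{G_{k-1}}$, using that $\widehat{\mathbb{1}_{G_k}/|G_k|}=\mathbb{1}_{G_k^\perp}$.

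Next we split the sum according to the scale $n$ of $y$:
\begin{itemize}
\item For low frequencies, where $G_k^\perp\subset G_n^\perp$, the characters appearing in $\sigma_k$ are trivial on $G_n$. Hence $K_k(x-y)=K_k(x)$, and these terms vanish exactly. This is where the Vilenkin structure makes things better than in $\mathbb{R}^d$.
\item For high frequencies, we use Cauchy--Schwarz with a weight:
\[
\int_{G\setminus G_n}|K_k(x)|\,dx\le\Big(\int|K_k(x)|^2|x|^{1+\varepsilon}\,dx\Big)^{1/2}\Big(\int_{|x|\ge|G_n|}|x|^{-1-\varepsilon}\,dx\Big)^{1/2}.
\]
The second factor is $\lesssim |G_n|^{-\varepsilon/2}$. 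For the first factor we use the Saloff-Coste/Taibleson equivalence of the weighted $L^2_\alpha$ norm with the difference-operator norm on the Fourier side, stated above, with $\alpha=1+\varepsilon$. This bounds the first factor by the double integral of $|\triangle_\xi\sigma_k(\eta)|^2/|\xi|^{2+\varepsilon}$. After controlling the cutoff error (the indicator introduces a jump of size $\lesssim\|\sigma\|_\infty$ only on a set of comparable measure), the hypothesis bounds this by $B^2|G_k^\perp|^{-\varepsilon}$.
\end{itemize}

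The care lies in matching the normalizations $|G_k^\perp|\asymp|G_k|^{-1}$ and checking that summing over $k$ gives a geometric series. The product $|G_n|^{-\varepsilon/2}\cdot|G_k^\perp|^{-\varepsilon/2}$ behaves like $(|G_k|/|G_n|)^{\varepsilon/2}$, which decays geometrically in $k-n$ because $m_k\ge2$. Summing over the relevant range of $k$ therefore gives a bound $\lesssim B+\|\sigma\|_\infty$, uniform in $n$ and $N$. This closes the Hörmander condition and hence the weak $(1,1)$ estimate.
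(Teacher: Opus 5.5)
Your proposal is correct. The paper only quotes this theorem from Saloff-Coste, but your argument is the abelian form of the scheme it uses for its own analogues (Lemma \ref{lemaweak11}, Corollary \ref{coroalternativeestimate} with $t=1$, and Part 1 of the proof of Theorem \ref{TeoMultiplierLr}): a Hörmander-type condition on $G\setminus G_n$ from weighted Cauchy--Schwarz and the Taibleson/Saloff-Coste difference-norm equivalence, followed by Calder\'on--Zygmund, Marcinkiewicz interpolation and duality. The one difference is that, because the hypothesis is localized to the shells $G_k^\perp\setminus G_{k-1}^\perp$, you split $\sigma$ into shell pieces and kill the low ones by exact $G_n$-invariance, whereas the paper works with the whole difference $\mathcal{F}^{-1}[\sigma_k](x\smallstar y^{-1})-\mathcal{F}^{-1}[\sigma_k](x)$ and the truncated equivalence \eqref{eqequimultiplier}. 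In your version the small-increment cutoff error is in fact zero, since each shell is a union of $G_{k-1}^\perp$-cosets; only the large increments contribute, giving the $\|\sigma\|_\infty^2|G_k^\perp|^{-\varepsilon}$ term.
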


\begin{rem}
The condition of Theorem 1.13 can be written as
\[
\int_{|\xi|_{\widehat{G}} < |G_n^\bot|} \int_{G_n^\bot \setminus G_{n-1}^\bot}
|\triangle^{1+\varepsilon/2}_\eta \sigma(\xi)|^2 \, d\eta \, d\xi \leq B^2 |G_n^\bot|^{-\varepsilon},
\]
where $\triangle^\beta_\eta$ is the Saloff-Coste difference operator
\[
\triangle^\beta_\eta \sigma(\xi) := \frac{\sigma(\eta + \xi) - \sigma(\xi)}{|\eta|^\beta_{\widehat{G}}}.
\]
\end{rem}

For the second part, we employ an argument typically used for the spaces \(L^{r}\) with \(r \neq 2\): an interpolation method. We do it because in the weighted spaces \(L^{r}_{\alpha}(G)\) becomes considerably more difficult to identify a useful equivalent norm that expresses the boundedness of invariant operators in terms of their symbols. There is probably none. So, we are forced to adopt a less direct approach using the convolution kernel of the operator instead of the symbol: a reasoning  similar to that explored in the commutative setting by Onneweer and Kitada (see \cite{Kitada1987, Onneweer1985, Onneweer1989}). Our aim here is to obtain a noncommutative analogue of the following theorem of Onneweer, which relies in a certain condition called by the authors \textbf{Condition $H(t)$}.

\begin{teo}[Onneweer]\label{teomiltionnewer}
Let $G$ be a locally compact abelian Vilenkin group with an order-bounded sequence $\{G_n\}_{n \in \mathbb{Z}}$ of compact open subgroups. Let $|\cdot|_{\mathcal{G}}$ be the ultrametric associated to the sequence, and for $1 \le r < \infty$ let $L^r_\alpha(G) := L^r(G, |\cdot|^\alpha_{\mathcal{G}})$.
\begin{enumerate}
    \item If $\sigma \in L^\infty(\hat{G})$ and condition $H(t)$ holds for some $1 < t < \infty$, then if $T_\sigma$ is a multiplier on $L^2_{\alpha_0}(G)$ for some $-1/r' < \alpha_0 < 1/r'$, it is a multiplier on $L^r_\alpha(G)$ for all $r, \alpha$ such that $1 < r < \infty$ and $-|\alpha_0| \le \alpha \le (r-1)|\alpha_0|$.
    \item If $\sigma \in L^\infty(\hat{G})$ and condition $H(1)$ holds, then $T_\sigma$ is a multiplier on $L^r(G)$ for $1 < r < \infty$.
\end{enumerate}
\end{teo}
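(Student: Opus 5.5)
The plan is to follow the classical Calderón--Zygmund scheme adapted to weights $|x|^\alpha_{\mathcal G}$. Write $T_\sigma f = K * f$ with $K=\check\sigma$, understood as a distribution, and truncate it as $K_N = K\,\1_{G\setminus G_N}$ (equivalently, replace $\sigma$ by its averages over $G_N^\bot$). I take condition $H(t)$ to be the Hörmander-type integral condition
\[
\sup_{n}\;\sum_{k\le n} |G_k|^{-1/t'}\Big(\int_{G_{k-1}\setminus G_k}|K(x \smallstar y)-K(x)|^t\,dx\Big)^{1/t} \le C, \qquad y\in G_n,
\]
with the sum taken over the dyadic shells $G_{k-1}\setminus G_k$ lying outside $G_n$. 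For $t=1$ this reduces to the usual Hörmander condition $\int_{G\setminus G_n}|K(x\smallstar y)-K(x)|\,dx\le C$ for $y\in G_n$.

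\emph{Part (2).} Boundedness on $L^2(G)$ is immediate from Plancherel, since $\sigma\in L^\infty$. Next I would prove weak type $(1,1)$ using the Calderón--Zygmund decomposition on $G$. In the Vilenkin setting this decomposition is particularly clean. The cosets $x\smallstar G_n$ are nested "dyadic cubes", so a stopping-time argument at height $\lambda$ gives disjoint maximal cosets $I_j$ with $\lambda<|I_j|^{-1}\int_{I_j}|f|\le C\lambda$; order boundedness of $\{m_n\}$ supplies the constant $C$. Write $f=g+\sum b_j$, where each $b_j$ has mean zero on $I_j$. The good part $g$ is handled by the $L^2$ bound. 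For the bad part, ultrametricity gives $x\smallstar I_j=I_j$ for $x\in I_j$, so $I_j$ is its own "doubling". Off $\bigcup I_j$, cancellation of $b_j$ together with the $H(1)$ condition gives $\int_{G\setminus I_j}|K*b_j|\le C\|b_j\|_1$. Marcinkiewicz interpolation then yields boundedness for $1<r\le2$, and duality gives $2\le r<\infty$, because $\tilde K(x)=\overline{K(x^{-1})}$ again satisfies $H(1)$ (its symbol is $\bar\sigma$).

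\emph{Part (1).} Here the input is boundedness on $L^2_{\alpha_0}$. I would first prove a weighted weak type $(1,1)$ estimate on $L^1_{\beta}$ for suitable $\beta$, using the same decomposition but measured against $|x|^{\beta}_{\mathcal G}dx$. The goal is to control $\int_{G\setminus I_j}|K*b_j(x)|\,|x|_{\mathcal G}^{\beta}dx$. Apply Hölder on each shell $G_{k-1}\setminus G_k$, putting the $L^t$ norm on the kernel difference and the $L^{t'}$ norm on the weight; this is exactly where $H(t)$ with $t>1$ is needed instead of $H(1)$. Because the weight is constant on shells not containing $e$, it can be compared with its value on $I_j$. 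The cases $e\in I_j$ and $e\notin I_j$ must be treated separately, since the weight is radial about $e$. The range $-1/r'<\alpha_0<1/r'$ is precisely the condition that $|x|^{\alpha_0}$ be an $A_r$-type weight on $G$, so the power weights are integrable at $e$ with the correct exponent.

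Once the weighted weak-type endpoints and the $L^2_{\alpha_0}$ bound are in hand, I would interpolate with change of measure (Stein--Weiss) between $L^2_{\pm\alpha_0}$ and $L^1_\beta$, combine this with duality ($L^r_\alpha$ is dual to $L^{r'}_{-\alpha r'/r}$), and finally check that the region swept out is exactly $-|\alpha_0|\le\alpha\le(r-1)|\alpha_0|$.

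The main obstacle is the weighted estimate of the bad part near the identity. When $e\in I_j$, the shells around $I_j$ and the weight interact nontrivially. There I would first try to bound $|x|^\beta_{\mathcal G}\le |G_{k-1}|^{\beta}$ shell by shell, sum a geometric series using $H(t)$, and use the restriction on $\alpha_0$ to make that series converge.
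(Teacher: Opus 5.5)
Your architecture matches the paper's, which proves this statement through its noncommutative version: Lemma \ref{lemaweak11} followed by the four-case interpolation in the theorem after Corollary \ref{coroalternativeestimate}. Part (2) is fine; it is the $\alpha=0$, $H(1)$ case of that lemma, then Marcinkiewicz and duality.

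The weighted core of Part (1), however, is misdiagnosed. For $\beta\le 0$ the case $e\in I_j$ is trivial. Then $I_j=G_m$, and $x\notin G_m$, $y\in G_m$ give $|x|^\beta_{\mathcal G}\le|y|^\beta_{\mathcal G}$. So $H(1)$ alone yields $\int_{G\setminus I_j}|K*b_j|\,|x|^\beta_{\mathcal G}\,dx\lesssim\|b_j\|_{L^1_\beta}$, with no series to sum and no restriction on $\alpha_0$.

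The real difficulty is $e\notin I_j$. Write $I_j=g_j\smallstar G_m$ with $g_j\in G_k\setminus G_{k+1}$ and $k<m$. On the shells $g_j\smallstar(G_n\setminus G_{n+1})$, $n<m$, surrounding $I_j$, the weight is constant and $\le|g_j|^\beta_{\mathcal G}$. The one exception is the shell $n=k$, which contains $e$; on all the others $H(1)$ (implied by $H(t)$) suffices. Hölder is needed only on the shell $n=k$:
\begin{itemize}
\item $H(t)$ bounds the $L^t$ norm of the kernel difference there by $|G_k|^{-\epsilon-1/t'}|G_m|^{\epsilon}$;
\item the $L^{t'}$ norm of the weight over that shell is $\lesssim|G_k|^{\beta+1/t'}$, provided $\beta t'>-1$;
\item the product is $\lesssim|g_j|^\beta_{\mathcal G}=|y|^\beta_{\mathcal G}$ for $y\in I_j$.
\end{itemize}
So the relevant restriction is $-1/t'<\alpha_0<1/t'$, as in the paper's Section 5 version. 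The $r'$ in the quoted statement has to be read as $t'$, since $\alpha_0$ is fixed before $r$ varies. It expresses local integrability of $|x|^{\beta t'}_{\mathcal G}$ at $e$, not an $A_r$ condition: $|x|^{\alpha}_{\mathcal G}\in A_r$ iff $-1<\alpha<r-1$.

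The same bookkeeping shows that the shell factor in your displayed $H(t)$ must be $|G_k|^{+1/t'}$, not $|G_k|^{-1/t'}$. As written, your condition is not implied by Onneweer's $H(t)$: the $k=n$ term alone can be of size $|G_n|^{-2/t'}$.

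Two further steps need repair.
\begin{itemize}
\item Proposition \ref{proCalderonZygmund} is only available for $-1<\beta\le0$, since it rests on the $A_1$-type bound in Proposition \ref{promeasurewighted}(ii). The good part also needs $T_\sigma$ bounded on $L^2_\beta$ for the \emph{same} $\beta$. The paper obtains this for $|\beta|\le|\alpha_0|$ by interpolating $L^2_{\alpha_0}$ with its dual $L^2_{-\alpha_0}$; your plan omits this step.
\item Stein--Weiss interpolation with change of measure (Theorem \ref{teoSteinWeiss}) needs strong-type bounds at both ends. You therefore cannot interpolate directly between the weak $L^1_\beta$ estimate and $L^2_{\pm\alpha_0}$. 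The paper first applies Marcinkiewicz with equal weights (weak $L^1_\beta$, strong $L^2_\beta$) to get $L^r_\beta$ for $1<r<2$, $-|\alpha_0|\le\beta\le0$. Only then does it use duality, and Stein--Weiss between strong bounds ($L^{r_0}$ and a weighted $L^2$) for positive weights.
\end{itemize}

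Finally, be careful with ``the region swept out is exactly'' the stated one. In the coordinates $(1/r,\alpha/r)$, duality acts as $(u,v)\mapsto(1-u,-v)$ and Stein--Weiss interpolates linearly. Hence the edges $\alpha=(r-1)|\alpha_0|$ for $1<r<2$ and $\alpha=-|\alpha_0|$ for $r>2$ are attained only as limits. The paper has the same issue: its admissible interval for $r_0$ in Case 3 is empty on that edge.
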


See \cite{Onneweer1985} for the definition of \emph{Condition} $H(t)$ in the abelian case. Finally, as an application of Theorem \ref{teomiltionnewer}, Onneweer and Quek proved the following Littlewood-Paley decomposition:

\begin{teo}[ Quek]
Let $G$ be a locally compact order-bounded abelian Vilenkin group, and let $1 < r < \infty$, $-1 < \alpha < r-1$. Then for all $f \in L^r_\alpha(G)$, it holds
\[
\|f\|_{L^r_\alpha(G)} \asymp \|S f\|_{L^r_\alpha(G)},
\]
where
\[
S f(x) = \left( \sum_{n \in \mathbb{Z}} |\mathcal{F}_G^{-1}(\widehat{f} \mathbb{1}_{G_{n+1}^\bot \setminus G_n^\bot})(x)|^2 \right)^{1/2}.
\]
\end{teo}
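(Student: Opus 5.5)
The plan is the usual Rademacher-function route: obtain the Littlewood--Paley equivalence as a consequence of a vector-valued multiplier theorem. Write $\Delta_n f := \mathcal{F}_G^{-1}(\widehat{f}\,\mathbb{1}_{G_{n+1}^\bot\setminus G_n^\bot})$. Each $\Delta_n$ is a multiplier whose symbol is the difference of two characteristic functions of subgroups of $\widehat G$. Its kernel is $|G_{n+1}|^{-1}\mathbb{1}_{G_{n+1}}-|G_n|^{-1}\mathbb{1}_{G_n}$, which is a difference of two normalized averaging operators over the subgroups $G_{n+1}$ and $G_n$.

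For each sign sequence $\epsilon=(\epsilon_n)\in\{\pm1\}^{\mathbb{Z}}$, set $\sigma_\epsilon:=\sum_n \epsilon_n \mathbb{1}_{G_{n+1}^\bot\setminus G_n^\bot}$. This symbol lies in $L^\infty(\widehat G)$ with norm at most $1$. First I would check that $T_{\sigma_\epsilon}$ is bounded on $L^2_{\alpha_0}(G)$ for every $|\alpha_0|<1$, uniformly in $\epsilon$. For this I use Saloff-Coste's equivalent norm $\int\int |\triangle_\eta \widehat f(\xi)|^2 |\eta|^{-1-\alpha_0}\,d\eta\,d\xi$ (for $\alpha_0>0$, with duality for $\alpha_0<0$). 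The product rule $\triangle_\eta(\sigma\widehat f)=\sigma(\cdot+\eta)\triangle_\eta\widehat f+(\triangle_\eta\sigma)\widehat f$ reduces the problem to bounding $\int |\triangle_\eta\sigma_\epsilon(\xi)|^2|\eta|^{-1-\alpha_0}d\eta$ by a constant times $|\xi|^{-\alpha_0}$. Here ultrametricity helps: $\sigma_\epsilon$ is constant on each annulus, so $\triangle_\eta\sigma_\epsilon(\xi)\neq 0$ forces $|\eta|\ge|\xi|$ up to one level. The integral then becomes a geometric sum, and it converges exactly when $\alpha_0>-1$ (respectively $<1$, after taking order-boundedness into account).

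Next I would verify condition $H(t)$, and in fact $H(1)$, for $\sigma_\epsilon$ uniformly in $\epsilon$. The kernel is $K_\epsilon=\sum_n\epsilon_n(\,|G_{n+1}|^{-1}\mathbb{1}_{G_{n+1}}-|G_n|^{-1}\mathbb{1}_{G_n})$. For $y\in G_m$, the translate difference $K_\epsilon(x-y)-K_\epsilon(x)$ vanishes off $G_m$, because each term is $G_k$-invariant for $k\le m$. Integrating over $G\setminus G_m$ (and its dyadic shells in the $H(t)$ version) therefore gives $0$, or at worst a bounded geometric sum. Theorem \ref{teomiltionnewer}(1) then yields $\|T_{\sigma_\epsilon}f\|_{L^r_\alpha}\le C\|f\|_{L^r_\alpha}$ for $1<r<\infty$ and $-|\alpha_0|\le\alpha\le(r-1)|\alpha_0|$. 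Letting $|\alpha_0|\to1$ covers the range $-1<\alpha<r-1$, and the constant is independent of $\epsilon$. I expect this step, tracking uniformity of the constants and making sure the full weight range is reached, to be the main obstacle. If the $L^2_{\alpha_0}$ estimate degenerates as $|\alpha_0|\to1$, I would instead fix $\alpha$ and choose $\alpha_0$ with $|\alpha_0|<1$ and $\alpha\in[-|\alpha_0|,(r-1)|\alpha_0|]$. This is always possible because $-1<\alpha<r-1$.

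Finally I average over $\epsilon$ using Rademacher independence and Khintchine's inequality. Pointwise, $\mathbb{E}_\epsilon|T_{\sigma_\epsilon}f(x)|^r\asymp (Sf(x))^r$. Integrating against $|x|^\alpha_{\mathcal{G}}dx$ and applying Fubini gives $\|Sf\|_{L^r_\alpha}\lesssim\|f\|_{L^r_\alpha}$. For the reverse inequality I use the duality between $L^r_\alpha$ and $L^{r'}_{-\alpha r'/r}$ together with $\sum_n\Delta_n=I$ and $\Delta_n\Delta_m=\delta_{nm}\Delta_n$. These give $|\langle f,g\rangle|=|\sum_n\langle\Delta_nf,\Delta_ng\rangle|\le\int Sf\,Sg\le\|Sf\|_{L^r_\alpha}\|Sg\|_{L^{r'}_{\alpha'}}$. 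The dual weight exponent $\alpha'=-\alpha/(r-1)$ lies in $(-1,r'-1)$ exactly when $-1<\alpha<r-1$, so the upper bound already proved applies to $Sg$. Justifying $\sum_n\Delta_n=I$ in $L^r_\alpha$ is a routine density argument using compactly supported simple functions.
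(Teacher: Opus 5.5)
The paper quotes this theorem without proof, but your argument follows the same route the paper takes for its analogue, Theorem~\ref{teolittlewood}. That route has five steps: randomized annulus symbols; an $L^2_{\alpha_0}$ bound from the difference-operator norm and the product rule, with duality for $\alpha_0<0$; condition $H(t)$ from the vanishing of $K_\epsilon(x-y)-K_\epsilon(x)$ off $G_m$; Khintchine's inequality; and duality for the lower bound. You carry out that last step correctly, including the check that $-\alpha/(r-1)\in(-1,r'-1)$.

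One step is incomplete as written. After the product rule, the bound $\int|\triangle_\eta\sigma_\epsilon(\xi)|^2|\eta|^{-1-\alpha_0}\,d\eta\lesssim|\xi|^{-\alpha_0}$ only controls the second term by $\int_{\widehat G}|\widehat f(\xi)|^2|\xi|^{-\alpha_0}\,d\xi$. To close the estimate you need the Pitt-type inequality $\int_{\widehat G}|\widehat f(\xi)|^2|\xi|^{-\alpha_0}\,d\xi\lesssim\|f\|^2_{L^2_{\alpha_0}(G)}$ for $0<\alpha_0<1$. This is the abelian counterpart of Lemma~\ref{lemma3.1}(iii), which the paper uses inside the proof of Theorem~\ref{teomultL2v1}.

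This inequality is also the only place where $\alpha_0<1$ enters; it fails for $\alpha_0\ge 1$, as the indicator of $G_0$ shows. The $\eta$-integral itself converges precisely when $\alpha_0>0$, regardless of order-boundedness. So your remark that it converges when $\alpha_0>-1$ (respectively $<1$) misattributes the restriction, and without Pitt your $L^2$ argument never sees it.

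Finally, the hypothesis on $\alpha_0$ in Theorem~\ref{teomiltionnewer}(1) is tied to $t$: it is $|\alpha_0|<1/t'$, as in Lemma~\ref{lemaweak11}. State explicitly that the kernel differences vanish identically, so $H(t)$ holds for every $t<\infty$, and choose $t$ large enough that $|\alpha_0|<1/t'$. With these two additions the proof is complete.
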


Lets turn now to the challenge of extending the previous results to the noncommutative setting, beginning with compact $p$-adic Lie groups. In that case, the dual object of the groups under consideration is no longer a group, which introduces additional complications: its elements are matrix representations, and thus one must carefully define the appropriate difference operators to be used \cite{Fischer2015, Fischer2020}. In addressing these issues, we are led to work with the noncommutative group Fourier transform \cite{Ruzhansky2010}, the representation theory of compact Vilenkin groups \cite{Boyarchenko2008, Corwin2004-kz, Howe1977, VelasquezRodriguez2025}, and the extension of several ideas originating from the theory of pseudo-differential operators on compact Lie groups \cite{Ruzhansky2015}.

\begin{rem}
    I should warn the reader that we will be using a fair amount of non-standard notation, although a lot of notation will be borrowed from \cite{Fischer2015, Fischer2020, Ruzhansky2010, Ruzhansky2015}. For this reason, it may be helpful to start by outlining the organization of the paper, so as to keep track of the meaning of each step along the way. I assume the reader has some basic familiarity with the representation theory of compact groups (just a bit!) and a working notion of $p$-adic numbers. Some background in $p$-adic analysis would certainly be useful! 
\end{rem}

\subsection{Organization of the Paper}
We aim to establish multiplier theorems on compact $p$-adic Lie groups, using as our main example the compact Heisenberg group. To achieve this, we must proceed through several preparatory steps:

\begin{itemize}
    \item We need to understand well enough the representations of these groups, and compute (if possible!) the matrix coefficients. This information should also help us to understand tensor products of representations.
    \item With the information about the representation theory of the group we can talk about \textbf{the group Fourier transform}. This will be our main tool, because left-invariant operators (which are the ones we study here!) are precisely those that become diagonal when we consider the Fourier series representation of functions.     
    \item We need to introduce a class of operators worth considering. In our setting, the operators of interest are (arguably) analogues of differential operators, which we refer to as Vladimirov-type operators. As it is customary, multiplier theorems provide a tool for studying the inverses of these pseudo-differential operators, which is one of our motivations, specially for Vladimirov-type operators.
    \item As usual, our multiplier theorems are given in terms of some control of the symbol and its “derivatives”, which in discrete sets are simply difference operators. In our context, it will be useful to define such operators in a nice way, so that it is possible to apply to them interpolations arguments.  
    \item Along the way, we will have to estimate many norms (analysis after all!) and prove equivalences between some of them. Obviously!   
\end{itemize}

Keeping these goals in mind, the paper has been organized in the following order:

\begin{enumerate}
    \item For the rest of this first section, we’ll focus on introducing all the necessary ideas in order to formally state the results we’ve obtained, starting by motivating them with the results obtained in the abelian case. This is intended to provide a clear and comprehensive exposition for readers who are not familiar with the representation theory of compact groups, particularly in the $p$-adic setting. A reader well informed on the topic can simply jump to Section 2. Most of the notation about representation theory is taken from \cite{Ruzhansky2010}, which is also a very handy reference for the basics on compact Lie groups. We also borrow some notation from \cite{Onneweer1985, Onneweer1989}, which are very important references for multiplier theorems on abelian Vilenkin groups.  
    \item In Section 2, we state our main results.
    \item In Section 3, we recall all relevant definitions and terminology related to compact Vilenkin groups. We highlight the particularities of the representation theory in this setting, comparing it with the well-studied case of (real) compact Lie groups.
    \item In Section 4, we introduce equivalent norms for the Hilbert spaces $L^2_\alpha(\mathbb{G})$, $\alpha>0$, and use them to provide conditions on the symbol for the $L^2_\alpha$-boundedness of a Fourier multiplier.
    \item In Section 5, we deal with the more abstract case of compact Vilenkin groups, and the Banach spaces $L^r_\alpha (G)$. We extend the arguments of \cite{Onneweer1985} to the noncommutative setting, exploiting the parallels between the topological structures of abelian and non-abelian Vilenkin groups.
    \item In Section 6, we show how theorems on compact Vilenkin groups take a particular form when $G = \mathbb{G}$ is a compact $p$-adic Lie group, using our understanding of the unitary dual and the difference operators to adapt Taibleson’s and Saloff-Coste conditions \cite{Taibleson1970, SaloffCoste1986} to compact noncommutative $p$-adic Lie groups.
    \item In Section 7 we discuss some implications and possible generalizations of the results obtained.
\end{enumerate}

\subsection{Compact Vilenkin groups}
It was Taibleson who gave one of the first results about $L^r$-multipliers on local fields, in the spirit of the celebrated Hörmander multiplier theorem. In \cite{Taibleson1970}, he used the ideas of Calderón and Zygmund to give some multiplier theorems which imply analogues to Mihlin and Hörmander results in the Euclidean space. Subsequently, in \cite{Onneweer1985}, Onneweer took Taibleson's ideas to a higher level of generality, initiating a series of papers on multipliers and function spaces on abelian Vilenkin groups. See \cite{Kitada1987, Onneweer1985, Onneweer1989, Yueping2002} and the references therein. 

Vilenkin groups are totally disconnected groups, endowed with a strictly decreasing sequence of compact open subgroups that form a basis of neighborhoods at the identity. Such sequence naturally induces an ultrametric on the group and, conversely, for any metrizable totally disconnected group, there is a basis of metric balls which is also a strictly decreasing sequence of compact open subgroups. In other words, Vilenkin groups are just metrizable totally disconnected groups, like the additive group of a local field, that we choose to see as a group with a suitable sequence of subgroups, mostly because of the advantages of such an approach when doing analysis. Next, we recall for the reader the formal definition.

\begin{defi}\normalfont
We say that a topological group $G$ is a \emph{compact Vilenkin group} if $(G, \smallstar)$ is a profinite group endowed with a strictly decreasing sequence of compact open subgroups $\mathcal{G} := \{G_n\}_{n \in \mathbb{N}_0}$ such that
\begin{enumerate}
    \item It holds:
    \[
    2 \leq m_n := |G_n / G_{n+1}| < \infty,
    \]
    for every $n \in \mathbb{N}_0$.
    \item 
    \[
    G = \bigcup_{n \in \mathbb{N}_0} G_n, \quad \text{and} \quad \bigcap_{n \in \mathbb{N}_0} G_n = \{e\}.
    \]
    \item The sequence $\{G_n\}_{n \in \mathbb{N}_0}$ forms a basis of neighborhoods at $e \in G$.
\end{enumerate}

If we also have
\[
\sup_{n \in \mathbb{N}_0} |G_n / G_{n+1}| < \infty,
\]
we say that $G$ is a \emph{bounded-order Vilenkin group}. If $|G_n / G_{n+1}|$ is constant, we say that $G$ is a \emph{constant-order Vilenkin group}.    
\end{defi}

\begin{rem}\label{remdefimetric}
The above definition is linked to the following definition of ultrametric: for $x, y \in G$ their associated distance is defined as
\[
\varrho_{\mathcal{G}}(x,y) = |x \smallstar y^{-1}|_{\mathcal{G}} :=
\begin{cases}
0, & \text{if } x = y,\\[2mm]
|G_n|, & \text{if } \, \,   x \smallstar y^{-1} \in G_n \setminus G_{n+1}.
\end{cases}
\]
Here, for a measurable set $A \subset G$, the symbol $|A|$ denotes the measure of $A$. Notice how this distance function depends on the choice of sequence of compact open subgroups.

The study of Fourier analysis on abelian Vilenkin groups has been an active area of research in the last years, especially in the framework of Vilenkin systems. See \cite{Persson2022} for a recent survey on the topic. However, the literature on the noncommutative case is rather limited, even though the theory is still a very interesting one, especially because of its interplay with representation theory.
\end{rem}
Operators and functions on a compact Vilenkin group \(G\) can be analyzed using the \textbf{group Fourier transform}, a particularly powerful tool for studying \emph{translation--invariant operators}, which are defined through the (generally non-commutative) translations of the group. If an operator \(T\) is translation invariant, then it admits the Fourier--series representation
\[
Tf(x)= \sum_{[\pi] \in \widehat{G}} d_{\pi}\,\mathrm{Tr}\big[\,\pi(x)\,\sigma_T(\pi)\,\widehat{f}(\pi)\,\big],
\]
where the mapping \(\sigma_T\) is called the \emph{symbol} of the operator \(T\). 
In what follows, our main objective will be to study this symbol and to determine which conditions on \(\sigma_T\) ensure that \(T\) defines a bounded operator on weighted \(L^{r}(G)\)-spaces.

\begin{defi}
Let $G$ be a compact topological group. We will denote by $\mathrm{Rep}(G)$ the collection of all (equivalence classes of) continuous, unitary, finite-dimensional representations of $G$. The symbol $\widehat{G}$ will be used to denote the unitary dual of $G$, that is, the collection of all (equivalence classes of) continuous unitary irreducible representations of $G$. As it is well known, any element $[\pi]$ of $\mathrm{Rep}(G)$ can be written as a direct sum of finitely many elements of $\widehat{G}$.
\end{defi}

\begin{rem}\label{remnotationinequalities}
We will use the symbols $a \lesssim b$ and $a \gtrsim b$ to indicate that the quantity $a$ is, respectively, less or equal, or greater or equal than a constant times the quantity $b$ ($a \leq C b, \,\, C> 0$). We use the notation $a \asymp b$ to indicate that $a \lesssim b$ and $a \gtrsim b$ hold simultaneously.
\end{rem}

\subsection{The compact Heisenberg group}
Let $d \geq 1$. Probably, the simplest non-trivial example of a noncommutative profinite group is the Heisenberg group over the $p$-adic integers $\mathbb{H}_d(\Z_p)$, here defined as $\mathbb{Z}_p^{2d+1}$ endowed with the noncommutative operation
\[
\mathbf{x} \smallstar \mathbf{y} := (x_1 + y_1, x_2 + y_2, x_3 + y_3 + x_1 \cdot y_2), \quad x_1, x_2, y_1, y_2 \in \mathbb{Z}_p^d, \; x_3, y_3 \in \mathbb{Z}_p.
\]
This is actually a pro-$p$ group, and a compact $p$-adic Lie group, which we want to use as a basic model to understand more general classes of compact Vilenkin groups. It is a pro-finite group, since it is compact and totally disconnected, and it can be endowed with the special sequence of compact open subgroups $\mathcal{G} = \{\G_n\}_{n \in \mathbb{N}_0}$ defined as
\[
\G_n := \mathbb{H}_d(p^n \mathbb{Z}_p)= \{ \mathbf{x} \in \mathbb{H}_d \, : \, \| \mathbf{x} \|_p \leq p^{-n}  \} , \quad n \in \mathbb{N}_0.
\]

With this basis of neighbourhoods, $\mathbb{H}_d$ becomes a compact Vilenkin group, and its associated Vilenkin distance, see Remark \ref{remdefimetric}, satisfies
\[
\|\mathbf{x}\|_{p}^{2d+1} = |\mathbf{x}|_{\mathcal{G}}, \quad \text{for any } \mathbf{x} \in \mathbb{G}= \mathbb{H}_d.
\]
Since $\mathbb{H}_d$ is a compact group, it naturally carries a Fourier analysis through its representation theory, as long as we know enough about its unitary dual. Unfortunately, to the best of the author’s knowledge, there are no results in the literature that compute the matrix coefficients of representations of general $\Z_p$-Lie groups, the kind of information one would ideally hope for. What we do have instead are works such as \cite{Boyarchenko2008, Howe1977}, which study the characters and dimensions of these representations, sometimes even in the broader setting of profinite groups. The missing piece in \cite{Boyarchenko2008, Howe1977} is a general induction procedure to find explicit forms of the representations from the co-adjoint orbits, similar to the Kirillov orbit method described in \cite{Corwin2004-kz}.

To address (partially) this gap in the literature, the author has recently carried out explicit computations of the unitary dual of compact nilpotent $\Z_p$-Lie groups up to dimension five in \cite{velasquezrodriguez2024heisenberg, 2024arXiv240706289V, VelasquezRodriguez2025}, including the corresponding matrix coefficients of the representations. This detailed understanding enables a better analysis of certain linear operators defined on the group, similar to the approach taken by Rockland for the Heisenberg group over the real numbers \cite{Rockland1978}. In his work, Rockland makes significant contributions to the field of hypoelliptic operators within the context of noncommutative groups, by establishing representation-theoretic criteria that provide a robust framework for determining the hypoellipticity of operators defined on the Heisenberg group. This is particularly important because the Heisenberg group serves as a model for various phenomena in analysis and physics, especially in quantum mechanics and sub-Riemannian geometry.

Here in this work we want to use the Heisenberg group, and our understanding of its representation theory, to guide our study of Fourier multipliers on compact $p$-adic Lie groups, called here sometimes $\Z_p$-Lie groups, and more generally on compact Vilenkin groups. The idea is to extend several of the techniques developed for compact Lie groups in \cite{Ruzhansky2010} to the Heisenberg group, moving next to a deeper exploration on more general compact $p$-adic Lie groups. In the same way as for the real case, we will see how we can assure the boundedness of a given invariant operator by requiring certain control on the "derivatives" of the symbol, which are in this case difference operators \cite{Fischer2015, Fischer2020}. For instance, on (real) compact Lie groups, the authors of \cite{Ruzhansky2015} showed how the control on the Fourier variable can be done through the use of the RT-difference operators, which generalize the difference operators from the abelian setting, and are defined in terms of the matrix coefficients of some realization of the unitary irreducible representations of the group. See \cite{Fischer2015, Fischer2020}.

The study of compact \(p\)-adic Lie groups may be viewed as a complementary side of the mainstream theory on real groups, specially because, according to several structure theorems, an arbitrary compact group is always close to some kind of product between a locally connected group, often a Lie group, and a profinite group. See for instance \cite[Chapter 9]{Hofmann2020}. This is some kind of extension of the idea that $\Q$ can only be endowed with two different metric structures: the usual locally connected real one, corresponding to the archimedean absolute value $| \cdot |$, and the locally profinite one, corresponding to the $p$-adic absolute values $| \cdot |_p$. So, for the $p$-adic setting, we aim to show that, although some form of Ruzhansky--Turunen (RT) difference operators \cite{Fischer2015, Fischer2020, Ruzhansky2015} will still be required (something in common with the real case!) the control over the Fourier variable developed in \cite{Ruzhansky2015} must be replaced. Indeed, the appropriate substitute is a suitable generalization of the conditions introduced by Saloff-Coste in \cite{SaloffCoste1986} for abelian Vilenkin groups. This development uses tools somehow similar to the intrinsic calculus studied by Fischer in \cite{Fischer2015, Fischer2020}, but it will consider the subtle variations that we need to introduce in the totally disconnected case. For starters, we want whatever definition of control on the Fourier variable to include the behaviour of the Vladimirov–Taibleson operator, which is defined on general compact $\mathbb{K}$-Lie groups as:

\begin{defi}[Vladimirov–Taibleson operator]\label{defivladimirov}
Let $\mathbb{K}$ be a non-archimedean local field with ring of integers $\mathcal{O}_{\mathbb{K}}$, prime ideal $\mathfrak{p} = p \mathcal{O}_{\mathbb{K}}$ and residue field $\mathbb{F}_q \simeq \mathcal{O}_{\mathbb{K}}/p \mathcal{O}_{\mathbb{K}}$. Let $\mathbb{G} \le \mathrm{GL}_m(\mathcal{O}_{\mathbb{K}})$ be a compact $d$-dimensional $\mathbb{K}$-Lie group, and let $\mathcal{M}_m(\mathcal{O}_{\mathbb{K}})$ the Lie algebra of all $m \times m$ matrices over $\mathcal{O}_{\mathbb{K}}$. We define the Vladimirov–Taibleson operator on $\mathbb{G}$ via
\[
D^\alpha f(\mathbf{x}) := \frac{1 - q^\alpha}{1 - q^{-(\alpha+d)}} \int_{\mathbb{G}} \frac{f(\mathbf{x} \smallstar \mathbf{y}^{-1}) - f(\mathbf{x})}{\|\mathbf{y}\|_{\mathbb{K}}^{\alpha+d}} \, d\mathbf{y},
\]
where
\[
\|\mathbf{y}\|_{\mathbb{K}} :=
\begin{cases}
1, & \mathbf{y} \in \mathbb{G} \setminus I_m + \mathcal{M}_m(p\mathcal{O}_{\mathbb{K}}),\\
|\G / \G_1|^{\frac{-1}{d}}q^{-(n-1)}, & \mathbf{y} \in  \G \cap \big( I_m + \mathcal{M}_m(p^n \mathcal{O}_{\mathbb{K}}) \setminus I_m + \mathcal{M}_m(p^{n+1} \mathcal{O}_{\mathbb{K}}) \big).
\end{cases}
\]Here $d\mathbf{y}$ denotes the unique normalized Haar measure on $\mathbb{G}$. Sometimes it will be convenient to consider the operator
\[
\mathbb{D}^\alpha f(\mathbf{x}) := \frac{1 - q^{\alpha}}{1 - q^{-(\alpha+d)}} \Big( 1 - \frac{1}{|\G/\G_1|} - \frac{1-p^{-d}}{p^\alpha -1 } |\G / \G_1|^{\alpha/d} \Big) f(\mathbf{x}) + D^\alpha f (\mathbf{x}),
\]which in particular for nilpotent groups can be written as \[
\mathbb{D}^\alpha f(\mathbf{x}) := \frac{1 - q^{-d}}{1 - q^{-(\alpha+d)}} f(\mathbf{x}) + \frac{1 - q^\alpha}{1 - q^{-(\alpha+d)}} \int_{\mathbb{G}} \frac{f(\mathbf{x} \smallstar \mathbf{y}^{-1}) - f(\mathbf{x})}{\|\mathbf{y}\|_{\mathbb{K}}^{\alpha+d}} \, d\mathbf{y}.
\]
\end{defi}

\begin{rem}
    Compact $p$-adic Lie groups are always isomorphic to some closed subgroup of $\mathrm{GL}_m (\Z_p)$, so we will assume, without losing generality, that our groups are always matrix sub-groups of a certain $\mathrm{GL}_m (\Z_p)$, for an appropriate $m \in \N$.  
\end{rem}

The study of analysis on pro-finite groups seems to be quite underdeveloped, and there seems to be no literature available, except for a few attempts to establish some properties of the Fourier transform, but it always makes use of the characters of the representations, rather than the matrix coefficients (see e.g. \cite{Gt2006}). Here we take a different approach, following a path somewhat similar to Ruzhansky, Turunen, and Wirth in \cite{Ruzhansky2010, Ruzhansky2015}, but including the obvious modifications necessary to be compatible with the ultrametric structure, in a similar fashion as Saloff-Coste did in \cite{SaloffCoste1986}. Our idea is simple, even though it requires working out some technical details. A key argument will be the interpolation between weighted spaces
\[
L^r_\alpha (\mathbb{G}) := \Big\{ f : \mathbb{G} \to \mathbb{C} \ \Big| \ \int_{\mathbb{G}} |f(\mathbf{x})|^r \|\mathbf{x}\|_p^\alpha \, d\mathbf{x} < \infty \Big\}, \quad 1 \le r < \infty, \ \alpha \in \mathbb{R},
\]
and the weak weighted spaces
\[
L^{r,w}_{\alpha}(\mathbb{G}) := \Big\{ f : \mathbb{G} \to \mathbb{C} \ \Big| \ \sup_{t>0} t \big(\mu_\alpha\{ \mathbf{x} \in \mathbb{G} : |f(\mathbf{x})| > t \}\big)^{1/r} < \infty \Big\}, \quad 1 \le r < \infty, \ \alpha \in \mathbb{R},
\]
where $$ \mu_\alpha (A):= \int_A \| \mathbf{x} \|_p^\alpha d \mathbf{x} .$$

\subsection{Dual of a compact $p$-adic Lie group} 
Let $\G$ be a compact $p$-adic Lie group. A \emph{\textbf{Fourier multiplier on}} $\G$ is a lineal operator $T_\sigma$ with the form \[
T_\sigma f(\mathbf{x})= \sum_{[\pi] \in \widehat{\G}} d_{\pi}\,\mathrm{Tr}\big[\,\pi(\mathbf{x})\,\sigma(\pi)\,\widehat{f}(\pi)\,\big],
\]where the mapping $$\sigma : \G \times \mathrm{Rep}(\G) \to \bigcup_{[\pi] \in \mathrm{Rep}(\G)} \C^{d_\pi \times d_\pi},$$is called the symbol of the operator. An equivalent definition would be: a Fourier multiplier is a left-invariant linear operator on $\G$. In any case, to understand multipliers on a compact group it is convenient, perhaps necessary, to have some understanding of the representation theory of the group in question. In general this can be a very difficult endeavour, but there are some classes of $p$-adic groups where there is enough information, like for compact nilpotent $p$-adic Lie groups.

\begin{rem}\label{remnotationdualofZp}
We identify each equivalence class $\lambda \in \widehat{\mathbb{Z}}_p \cong \mathbb{Q}_p / \mathbb{Z}_p$ with its representative in the complete system
\[
\{1\} \cup \Big\{ \sum_{k=1}^{\infty} \lambda_k p^{-k} : \text{only finitely many } \lambda_k \neq 0 \Big\}.
\]
Similarly, elements of the quotients $\mathbb{Q}_p / p^{-n} \mathbb{Z}_p$ are chosen from
\[
\{1\} \cup \Big\{ \sum_{k=n+1}^{\infty} \lambda_k p^{-k} : \text{only finitely many } \lambda_k \neq 0 \Big\}.
\]
\end{rem}

\begin{rem}
    $\widehat{\Z}_p^d$ is also a tree. To see it, identify any $\vec{\lambda} \in \widehat{\Z}_p^d$ with \[
\{(1,...,1)\} \cup \Big\{ \sum_{k=1}^{\infty} \vec{\lambda}_k p^{-k} : \text{only for finitely many } \, k, \, \,  (0,...,0) \neq  \vec{\lambda}_k \in \{0,1,...,p-1 \}^d  \Big\}.
\]Define $P_l (\vec{\lambda}) := \sum_{k=1}^l \vec{\lambda}_k p^{-k}$. We say that $\xi$ is a descendant of $\eta$ if $P_l(\xi) = \eta$ for some $l$. And to draw an edge between two $\xi, \eta \in \widehat{\Z}_p^d$ we follow the next instructions: 
\begin{enumerate}
    \item The root of the tree is $\vec{0}=(0,...,0)$. All the elements of the form $ \vec{\lambda}_1 p^{-1}$ are connected to $\vec{0}$. This forms level one. 
    \item In level two we have the elements $\vec{\lambda}$ with the form $\vec{\lambda}_1 p^{-1} + \vec{\lambda}_2 p^{-2}.$ We will draw an edge between one of this, and any of the previous edges $\eta$ if and only if $P_1( \vec{\lambda}) = \eta$. 
    \item Following the same idea, once we have the points and edges up to level 2, we consider all the elements $\vec{\lambda}_1 p^{-1} + \vec{\lambda}_2 p^{-2}+ \vec{\lambda}_3 p^{-3}$, $\vec{\lambda}_3 \neq 0$. One of these connects with an edge $\eta$ of a previous level if and only if $P_2 (\vec{\lambda}) = \eta.$
    \item In general, if $\|  \xi \|_p = p^n$, then $\xi$ is connected with an edge to $\eta$ in a previous level if and only if $P_{n-1} (\xi) = \eta .$
\end{enumerate}
\end{rem}

The information we would like to have (ideally!) about the unitary dual is some sort of parametrization for it, ideally by some subset of $\widehat{\Z}_p^d$, like in the case we describe next. The following ideas are developed in detail in \cite{Boyarchenko2008} and \cite{Howe1977}. 

Let $\mathfrak{g}$ be a nilpotent $\Z_p$-Lie algebra, and let $\G = \exp(\mathfrak{g})$ denote the associated compact nilpotent $p$-adic Lie group. For simplicity we will consider $\g$ as a sub-algebra of $\mathcal{M}_{m} (\Z_p)$, for an appropriate $m \in \N$. Throughout, we assume that $p$ is greater than the nilpotency class of $\g$. For such groups one can carry out the following construction:

\begin{enumerate}
    \item Assume $\g \cong \Z_p^d$, so that $\g$ and $\G$ have dimension $d$. 
    \item Consider the dual of $\g$: $$\g^*:=\{ \chi_\ell (X)=e^{2 \pi i \{ \ell \cdot X  \}_p } \, : \, \ell \in \widehat{\Z}_p^d \cong \Q_p^d / \Z_p^d   \}.$$
    \item The \textbf{co-adjoint representation} in this case has the form $$ \langle \mathrm{Ad}^*_{\mathbf{g}} \ell , X \rangle := \langle  \ell , \mathrm{Ad}_{\mathbf{g}^{-1}} X \rangle = e^{2 \pi i \{ \ell \cdot \mathrm{Ad}_{\mathbf{g}^{-1}}  X  \}_p },$$where the \textbf{adjoint representation} is, as usual, defined as the map $$\mathrm{Ad}: \G \to \mathrm{Aut}(\g), \, \, \, \mathbf{g} \mapsto \Psi_\mathbf{g} (X) = \mathbf{g} X \mathbf{g}^{-1}.$$
    \item Denote by $\mathcal{O}_\ell$ the \textbf{orbit of the co-adjoint representation} associated to $\ell$. A \textbf{polarization} of $\g$ at $\ell$ is a Lie subalgebra $\mathfrak{p} \subset \g$ which has the property that $\ell$ is trivial on $[ \mathfrak{p}, \mathfrak{p} ]$ ( $e^{2 \pi i \{ \ell \cdot X \}_p} =1$, for all $X \in \mathfrak{p}$)  and which is maximal among all additive subgroups of $\g$ with this property.
    \item According to \cite[ Lemma 1.4]{Howe1977}, a polarizations always
    exist, and if $P:= \mathbf{exp}(\mathfrak{p})$ is the subgroup of $\G$ corresponding to a polarization $\mathfrak{p}$, then $\ell$ induces
    a 1-dimensional character $\chi_\ell$ of $P$, and we can form the induced representation $\pi_\ell = \mathrm{Ind}_\G (P, \chi_\ell)$. 
    \item It was proven in \cite[Lemma 1.2]{Howe1977}, and later in \cite{Boyarchenko2008} for more general profinite groups, how the resulting representation is always irreducible; its isomorphism
    class depends only on the $\G$-orbit of $\ell$ ; and, finally, every $[\pi] \in \widehat{\G}$ arises in this way from a unique $\G$-orbit $\mathcal{O}_\ell \subset \g^*$.
    \item According to \cite[Lemma 1.2]{Howe1977}, the \textbf{character of the representation} associated to the $\G$-orbit $\mathcal{O}_\ell$ has the formula $$\chi_{\pi_\ell} (\mathbf{exp}(X)) = |\mathcal{O}_\ell|^{-1/2} \sum_{\xi \in \mathcal{O}_\ell} e^{2 \pi i \{ \xi \cdot X \}_p}.$$  
    \item In conclusion \textbf{the unitary dual of a compact $p$-adic Lie group $\G$ can be identified with a certain special subset of} $\widehat{\Z}_p^d \cong \Q_p^d / \Z_p^d$.
\end{enumerate} 
In particular, the Kirillov-Howe formula \cite{Howe1977} gives us for $\mathbb{H}_d$:
\[
\chi_{\pi_\xi}(\mathbf{x}) = |\xi_3|_p^d e^{2\pi i \{ \xi_1 \cdot x_1 + \xi_2 \cdot x_2 + \xi_3 x_3 \}_p} \mathbb{1}_{p^{-\vartheta(\xi_3)} \mathbb{Z}_p^{2d}}(x_1, x_2)
.\]Here the function $\mathbb{1}_A$ denotes the indicator function of the measurable set $A$, and $\vartheta(\xi)$ is the usual $p$-adic valuation. 9
When the exponential map of our Lie group is not an homeomorphism, the arguments in \cite{Boyarchenko2008,Howe1977} do not work any more, and we cannot get our irreducible representations from co-adjoint orbits like in \cite{Corwin2004-kz}. However, for any $\G \leq \mathrm{GL}_m (\Z_p)$ and any $[\pi] \in \widehat{\G}$, we can define the following ultrametric: 

$$|\pi|_{\widehat{\G}}:= |\G_{n_{[\pi]}}|^{-1}, \, \| \pi \|_p:=|\pi|_{\widehat{\G}}^{1/d}, \, \text{where} \, n_{[\pi]}:= \min\{ n \in \N_0 \, : \, \pi(\mathbf{x}) = I_{d_\pi}  \}.$$

The key takeaway here is that $\widehat{\G}$ and $\mathbf{Rep}(\G)$ can be endowed with a certain ultrametric, meaning that on the abelian monoid $(\mathrm{Rep}(\G), \otimes)$ it holds $$| [\pi] \otimes [\pi'] |_{\widehat{\G}} \leq  \max \{| [\pi] |_{\widehat{\G}}, \, | [\pi'] |_{\widehat{\G}}\}.$$ In particular, when we $\G$ is nilpotent or the exponential map is an homeomorphism, the arguments in \cite{Boyarchenko2008} let us conclude that $\widehat{\G}$ can be identified with a certain subset of $\widehat{\Z}_p^d \cong \Q_p^d/\Z_p^d$, and the previous definition of $\| \cdot \|_p$ coincides with the usual $p$-adic norm on $\Q_p^d/\Z_p^d$. This means that $L^2$-functions on a compact $p$-adic Lie group $\G$ can be written in its Fourier series representation 
\begin{equation}
f(\mathbf{x}) := \sum_{n \in \N_0 } \sum_{ [ \pi] \in \widehat{\G}, \, \| \pi \|_p = p^n } d_\pi \, \mathrm{Tr}[\pi(\mathbf{x}) \widehat{f}(\pi)], \quad f \in L^2(\mathbb{G}),
\end{equation}
where $\widehat{f}$ denotes the group Fourier transform of $f$, defined for $\mathbb{G}$ as
$$\mathcal{F}_{\mathbb{G}}[f](\pi) := \int_{\mathbb{G}} f(\mathbf{x}) \pi^*(\mathbf{x})  \, d\mathbf{x}
.$$In particular, for the Heisenberg group, the Fourier series representation takes the form 

\begin{equation}
f(\mathbf{x}) := \sum_{\xi_3 \in \widehat{\mathbb{Z}_p}} \sum_{(\xi_1, \xi_2) \in \mathbb{Q}_p^{2d} / p^{\vartheta(\xi_3)} \mathbb{Z}_p^{2d}} |\xi_3|_p^d \, \mathrm{Tr}[\pi_\xi(\mathbf{x}) \widehat{f}(\xi)], \quad f \in L^2(\mathbb{G}).
\end{equation}
See \cite{velasquezrodriguez2024heisenberg} for the complete calculations.

\subsection{Equivalent norms on $L^2_\alpha (\mathbb{G}) $}
As we said before, in this paper we use mainly two arguments:

\begin{itemize}
    \item[(i)] A multiplier theorem between $L^2_\alpha$-spaces,
    \item[(ii)] An interpolation argument between $L^{1,w}_\alpha (\G)$ and $L^2_\alpha(\G)$. 
\end{itemize}

For the treatment of the endpoint spaces $L^2_\alpha(\mathbb{G})$ in our interpolation argument, we incorporate Saloff-Coste difference operators to the unitary dual, by means of the following equivalence of norms:
\begin{equation}\label{eq1}
\boxed{\| f \|_{L^2_\alpha(\mathbb{G})}^2 \asymp \sum_{[\xi] \in \widehat{\mathbb{G}}} \sum_{[\eta] \in \widehat{\mathbb{G}}} d_\xi d_\eta  \, \| \eta \|_p^{-(\alpha + d)} \| \Delta_\eta \widehat{f}(\xi) \|_{HS}^2,}
\end{equation}
which is a simple consequence of the following equivalence \begin{equation}
    \boxed{ \|\mathbf{x}\|_{p}^\alpha \asymp I_\alpha(x) := 
    \sum_{[\eta] \in \widehat{\mathbb{G}}} 
    d_\eta \, \|\eta(\mathbf{x}) - I_{d_\eta} \|^2_{HS} \, 
    \|\eta\|_p^{-(\alpha+d)}.}
\end{equation}
The idea actually comes from Taibleson and Saloff-Coste, who worked out the details on local fields and abelian Vilenkin groups \cite{SaloffCoste1986}. In particular, for the Heisenberg group, the equivalence of norms takes the following form:

\begin{equation}\label{eq1Heis}
\boxed{\| f \|_{L^2_\alpha(\mathbb{H}_d)}^2 \asymp \sum_{[\xi] \in \widehat{\mathbb{H}}_d} \sum_{[\eta] \in \widehat{\mathbb{H}}_d} |\xi_3|_p^d |\eta_3|_p^d \, \| \eta \|_p^{-(\alpha + (2d + 1))} \| \Delta_\eta \widehat{f}(\xi) \|_{HS}^2.}
\end{equation}

The above norm uses the fact that the unitary dual of $\mathbb{H}_d$, as it was proven in \cite{velasquezrodriguez2024heisenberg}, can be identified with the following sub-set of $\widehat{\mathbb{Z}}_p^{2d+1}$:
\[
\widehat{\mathbb{H}}_d := \{ \xi \in \widehat{\mathbb{Z}}_p^{2d+1} : (\xi_1, \xi_2) \in \mathbb{Q}_p^{2d} / p^{\vartheta(\xi_3)} \mathbb{Z}_p^{2d} \},
\]
where for any equivalence class $[\pi] \in \widehat{\mathbb{H}}_d$, we choose a representative $\pi_\xi$ defined on the Hilbert space
\[
\mathcal{H}_\xi := \mathrm{span}_\mathbb{C} \Big\{ |\xi_3|_p^{d/2} \mathbb{1}_{h + p^{-\vartheta(\xi_3)} \mathbb{Z}_p^d} : h \in \mathbb{Z}_p^d / p^{-\vartheta(\xi_3)} \mathbb{Z}_p^d \Big\} \subset L^2(\mathbb{Z}_p^d),
\]
via the formula
\[
(\pi_\xi(\mathbf{x}) \varphi)(u) := e^{2\pi i \{ \xi_1 \cdot x_1 + \xi_2 \cdot x_2 + \xi_3(x_3 + x_2 \cdot u) \}_p} \varphi(u + x_1).
\]Here, the $p$-adic valuation of $\xi_3 \in \mathbb{Q}_p$ is denoted by $\vartheta(\xi_3)$.

\subsection{Difference operators on $\mathbb{G}$}

In this paper, we are concerned with the study of left-invariant linear operators on $\mathbb{G}$, 
which are precisely those operators that can be written as block-diagonal operators using Fourier series. The \emph{symbol} of the operator can be written in terms of a given realization $\pi$ of $[\pi]$ as
\[
\sigma_T(\pi) = \pi^*(\mathbf{x}) T \pi(\mathbf{x}),
\]
and it can be used to establish conditions that guarantee the boundedness of the operators between 
certain function spaces. For instance, for the Hilbert space $L^2(\mathbb{G})$, the condition $T \in \mathcal{L}(L^2(\mathbb{G}))$ (meaning $T$ is a bounded operator on $L^2(\mathbb{G})$) is equivalent to 
the symbol condition
\[ \sigma \in L^\infty_{op} (\widehat{\G}) := \{ \sigma \, : \,  \sup_{[\xi] \in \widehat{\mathbb{G}}} \|\sigma(\xi)\|_{\text{op}} < \infty, 
\quad 
\|\sigma(\xi)\|_{\text{op}} := \sup\{\|\sigma(\xi)v\|_{\mathcal{H}_\xi} : \|v\|_{\mathcal{H}_\xi}=1\}\} 
.
\]

However, when $r \neq 2$, there is no obvious way to guarantee boundedness in terms of the symbol alone, and it remains an open problem to find necessary and sufficient conditions, even in simple cases. 
A typical first attempt is to control the behaviour of the symbol using its derivatives or certain 
\emph{difference operators}, analogous to the classical Mikhlin multiplier theorem. For locally connected compact Lie groups the reader can find in \cite{Ruzhansky2010, Ruzhansky2015} the right definition of difference operators, 
and in \cite{Ruzhansky2015} the corresponding Mikhlin-type theorem. See also \cite{Fischer2015, Fischer2020}.

\begin{defi}[Difference operator]\label{def:diff_op}\normalfont
Let $G$ be a compact group. The difference operator $\Delta_\eta$, $[\eta] \in \widehat{G}$, 
acts on symbols via
\[
\Delta_\eta \sigma(\xi) := \sigma(\eta \otimes \xi) - \sigma(\mathrm{I}_{d_\eta} \otimes \xi),
\]where $\mathrm{I}_{d_\eta}$ denotes the identity representation of dimension $d_\eta$.
\end{defi}

\begin{rem}
Let $G$ be a compact group. Any continuous unitary representation $[\pi] \in \mathrm{Rep}(G)$ can be written as a direct sum of irreducible 
representations:
\[
[\pi] \cong \bigoplus_{[\xi] \in \widehat{G}} m_{[\xi],[\pi]} [\xi],
\]
so symbols initially defined on the unitary dual $\widehat{G}$ can always be extended to the whole $\mathrm{Rep}(G)$, and any mapping 
on $\mathrm{Rep}(G)$ restricts to a mapping on $\widehat{G}$.
\end{rem}

There is a natural connection between Definition~\ref{def:diff_op} and the so-called RT difference operators. 
Identifying our finite-dimensional representations with their matrix realizations, and their tensor products with Kronecker products, 
we may think of $[\eta] \otimes [\xi] \cong [\eta \otimes \xi]$ as a block matrix of size $d_\eta \times d_\xi$ with block entries $\eta_{ij}(x)\xi(x)$. 
Hence, for $f \in L^2(G)$, the difference operator acts as
\[
\Delta_\eta\widehat{ f}(\xi) = \widehat{f}(\eta \otimes \xi) - \widehat{f}(\mathrm{I}_{d_\eta} \otimes \xi)
= \int_G f(x) \big( (\eta(x) - \mathrm{I}_{d_\eta}) \otimes \xi(x) \big)^* \, dx,
\]
and the $(i,j)$-th block-entry is
\[
[\Delta_\eta \widehat{f}(\xi)]_{ij} := \int_G f(x) (\overline{\eta_{ji}}(x) - \delta_{ij}) \xi^*(x),
\]
which coincides with the RT difference operator associated with the function $q^\eta_{ij}(x) := \overline{\eta_{ji}}(x) - \delta_{ij}$.

\begin{defi}[RT difference operator]
Let $\G$ be a compact Lie group and $q \in C^\infty(\G)$. The RT difference operator acting on mappings 
defined on the unitary dual is
\[
\Delta_q \sigma(\xi) := \mathcal{F}_\G \Big[ q \, \mathcal{F}_\G^{-1}[\sigma] \Big](\xi)
= \int_\G q(x) \mathcal{F}_\G^{-1}[\sigma](x) \, dx.
\]
\end{defi}

The RT difference operators can be used to prove an analogue of Mikhlin condition on compact Lie
groups. Here we are interested in finding its right analogue for a compact $p$-adic Lie group $\mathbb{G}$, starting with our first example $\mathbb{H}_d$, which will serve us to explore the behaviour of the symbol of interesting
operators on a noncommutative setting. In light of this, considering how the tensor product of representations will be frequently used, it is convenient to recall how for the Heisenberg group it was proven in
\cite{velasquezrodriguez2024heisenberg} that the character of the tensor product of representations is
\[
\chi_{\pi_\eta \otimes \pi_\xi}(\mathbf{x}) = \chi_{\pi_\eta}(\mathbf{x}) \chi_{\pi_\xi}(\mathbf{x}) 
= |\eta_3|_p^d |\xi_3|_p^d \, e^{2\pi i \{\mathbf{x} \cdot (\xi + \eta)\}_p} \, \mathbb{1}_{\max\{|\xi_3|_p, |\eta_3|_p\}\mathbb{Z}_p^{2d}}(x_1,x_2),
\]
which provides a convenient way to compute the symbol of tensor products. To see it, consider the set $$\widehat{\mathbb{H}}_d^{\N, \, finite} := \{ f: \N \to \widehat{\mathbb{H}}_d \subseteq \widehat{\Z}_p^{2d+1} \, : \, f(n)=0 \,\, \text{for large enough} \, n  \},$$which is the set of finite sequences on $\widehat{\mathbb{H}}_d$. Define $S_\infty$ as the union of all symmetric groups $S_n$. Then $$\Gamma(\mathbb{H}_d):= \widehat{\mathbb{H}}_d^{\N, \, finite}/ S_\infty,$$is the collection of all finite unordered sequences of elements of $\widehat{\mathbb{H}}_d$, which is isomorphic to $(\mathrm{Rep}(\mathbb{H}_d), \otimes)$ if we endow it with the following operation $\boxtimes$.   

\begin{defi}[Congruence and $\boxtimes$ operation]\normalfont
\,
\begin{enumerate}
    \item Let $\gamma, \zeta \in \widehat{\mathbb{Z}}_p^d$. We say that $\gamma$ is congruent to $\zeta$ modulo $p^n$, and we write $\gamma \equiv \zeta \mod p^n$, if
    \[
    \gamma - \zeta \in  p^{-n} \mathbb{Z}_p^d.
    \]
    In other words, $\gamma, \zeta \in \widehat{\mathbb{Z}}_p^d$ are congruent if they have the same digits in their $p$-adic expansion, except maybe for the first $n$ digits. This defines, for every fixed $n$, an equivalence relation in $\widehat{\mathbb{Z}}_p^d$, and we denote the class of $\gamma$ in $\Q_p^d / p^{-n}\Z_p^d$ by $[\gamma]_n$.
    \item For $\xi, \eta \in \widehat{\mathbb{Z}}_p^{2d+1}$, let $N(\eta,\xi) \in \mathbb{N}_0$ be the natural number such that
    \[
    p^{N(\eta,\xi)} = \max \{ |\eta_3|_p, |\xi_3|_p \}.
    \]
    We define the function
    \[
    \boxtimes : \widehat{\mathbb{H}}_d \times \widehat{\mathbb{H}}_d \to \Gamma(\widehat{\mathbb{H}}_d)
    \]
    through the formula $\xi \boxtimes \eta :=$
    \[
    \begin{cases} \big([ \xi_1 + \eta_1 ]_{N(\eta,\xi)}, [ \xi_2 + \eta_2 ]_{N(\eta,\xi)}, \xi_3 + \eta_3 \big), & \text{if } \, |\xi_3 + \eta_3|_p = \max \{ |\xi_3|_p, |\eta_3|_p \}  \, \,, \\
\bigoplus_{\gamma \in p^{-N(\eta, \xi)} \mathbb{Z}_p^{2d} / p^{\vartheta(\xi_3+\eta_3)} \mathbb{Z}_p^{2d}} \big([ \xi_1 + \eta_1 + \gamma_1 ]_{N(\eta,\xi)}, [ \xi_2 + \eta_2 + \gamma_2 ]_{N(\eta,\xi)}, \xi_3 + \eta_3 \big) , & \text{if } \, |\xi_3 + \eta_3|_p<\max \{ |\xi_3|_p, |\eta_3|_p \} .
\end{cases} 
    \]
\end{enumerate}
\end{defi}

As we will show later, the tensor products of representations decompose as
\[
\pi_\eta \otimes \pi_\xi \cong  \bigoplus_{j=1}^{|\xi_3 + \eta_3|_p^d} \bigoplus_{\gamma \in p^{-N(\eta, \xi)} \mathbb{Z}_p^{2d} / p^{\vartheta(\xi_3+\eta_3)} \mathbb{Z}_p^{2d}} \pi_{[\eta + \xi + (\gamma_1, \gamma_2, 1)]},
\quad \dim_\mathbb{C}(\mathcal{H}_{[\eta + \xi + (\gamma_1, \gamma_2, 1)]}) = |\xi_3 + \eta_3|_p^d,
\]
which implies the following formula for symbols on $\mathbb{G}$:
\[
\Delta_\eta \sigma(\xi) =
\begin{cases}
\displaystyle
\bigoplus_{j=1}^{|\xi_3 + \eta_3|_p^d} \bigoplus_{\gamma \in p^{-N(\eta, \xi)} \mathbb{Z}_p^{2d} / p^{\vartheta(\xi_3+\eta_3)} \mathbb{Z}_p^{2d}} \sigma([\eta + \xi + (\gamma_1, \gamma_2, 1)]) - \mathrm{I}_{d_\eta} \otimes \sigma(\xi), & |\xi_3 + \eta_3|_p < \|(\xi_3, \eta_3)\|_p, \\
\displaystyle
\bigoplus_{j=1}^{|\xi_3|_p^d |\eta_3|_p^d / |\xi_3+\eta_3|_p^d} \sigma(\eta \boxtimes \xi) - \mathrm{I}_{d_\eta} \otimes \sigma(\xi), & |\xi_3 + \eta_3|_p = \|(\xi_3, \eta_3)\|_p.
\end{cases}
\]

In particular, if $|\xi_3 + \eta_3|_p = \|(\xi_3, \eta_3)\|_p$:
\[
\Delta_\eta \sigma(\xi) :=
\begin{cases}
\mathrm{I}_{d_\eta} \otimes \big(\sigma(\xi \boxtimes \eta) - \sigma(\xi)\big), & |\eta_3|_p \le |\xi_3|_p, \\
\sigma(\xi \boxtimes \eta) \otimes \mathrm{I}_{d_\xi} - \mathrm{I}_{d_\eta} \otimes \sigma(\xi), & |\eta_3|_p > |\xi_3|_p,
\end{cases}
\]
where $\eta = (\eta_1, \eta_2, \eta_3), \xi = (\xi_1, \xi_2, \xi_3) \in \widehat{\mathbb{H}}_d \subset \widehat{\mathbb{Z}}_p^{2d+1}$.

\subsection{The symbol of the Vladimirov-Taibleson operator}

In order to understand the kind of difference operator that we need on $\mathbb{G}$, we have to consider at least one example of an interesting Fourier multiplier. The most natural one is the Vladimirov-Taibleson operator, which we introduced before in Definition \ref{defivladimirov}. The symbol of this operator can be easily calculated, and we have for $\alpha \in \mathbb{R}$:
\[
\sigma_{\mathbb{D}^\alpha}(\xi) =
\begin{cases}
\dfrac{1-p^{-(2d+1)}}{1-p^{-(\alpha+(2d+1))}}, & \text{if } \|\xi\|_p = 1, \\[1.2em]
\|\xi\|_p^\alpha \, \mathrm{I}_{d_\xi}, & \text{if } \|\xi\|_p > 1.
\end{cases}
\]

For any compact Vilenkin group one can notice how the symbol of the VT operator posses some sort of local constancy property. For instance, on compact $p$-adic Lie groups we can check how for $\|\eta\|_p < \|\xi\|_p$ it holds
\[
\sigma_{\mathbb{D}^\alpha} (\eta \otimes \xi) = \sigma_{\mathbb{D}^\alpha} ( \mathrm{I}_{d_\eta} \otimes \xi),
\]
so that
\[
\Delta_\eta \sigma_{\mathbb{D}^\alpha} (\xi) = 0,
\]
and this behaviour of the VT operator is not special to $\mathbb{G}$, but the same occurs in many other classes of compact Vilenkin groups. A generalization of this local constancy property would be rapid decaying of the difference operator for $\|\eta\|_p < \|\xi\|_p$, that is
\[
\|\Delta^\beta_\eta \sigma(\xi)\|_{\mathrm{op}} := \frac{1}{\|\eta\|_p^\beta}
\|\Delta_\eta \sigma(\xi)\|_{\mathrm{op}} \leq C \|\xi\|_p^{-\beta}, \quad \text{for some } C > 0, \ \text{and any } \beta \geq 0,
\]
which is actually a similar condition to the one given by Saloff-Coste in \cite{SaloffCoste1986}. Notice how the condition in \cite{SaloffCoste1986} is basically the same condition Taibleson gave for local fields in \cite{Taibleson1970}, but in the more general setting of abelian Vilenkin groups. One of the main results we will prove here is a generalization of Theorem \ref{teosaloff} to the noncommutative
group $\mathbb{G}$, which provides a $p$-adic analogue of the Mikhlin condition provided in \cite{Ruzhansky2015} for compact Lie groups. However, this will come as a more general result about interpolation between weighted
spaces on compact Vilenkin groups so, in order to introduce better what comes next, we want to
emphasise in our first main underlying argument which will be the alternative choice of norm for $L^2_\alpha(\mathbb{G})$, proven in Lemma \ref{lemma3.1}. This time, unlike for the abelian case, we actually have more than one definition of difference operator. The first kind was given in Definition \ref{def:diff_op}, and now we have the following second definition:

\begin{defi}[RT-difference operators]\label{RT-difference operators}\normalfont
Given $\xi \in \widehat{\mathbb{G}}$, $\eta \in \widehat{\mathbb{Z}}_p^{2d+1}$, and $\beta \ge 0$, the \emph{RT-difference operator} $\triangletimes_\eta^\beta$ is defined as
\[
\triangletimes_\eta^\beta \sigma(\xi) := \frac{1}{\|\eta\|_p^\beta} \mathcal{F}_G \Big[ \big( e^{2 \pi i \{\eta \cdot x\}_p} - 1 \big) \mathcal{F}_G^{-1}[\sigma] \Big](\xi).
\]
We write $\triangletimes_{\eta} := \triangletimes^0_{\eta} \neq \triangletimes^1_{\eta}$.
\end{defi}

\begin{rem}
    In the above definition the abbreviature RT stand for Ruzhasnky-Turunen, who defined together with J. Wirth the following difference operators: $$\triangletimes_q \sigma (\xi) := \mathcal{F}_G \Big[ q (\cdot) \mathcal{F}_G^{-1}[\sigma] \Big](\xi) ,$$for a suitable differentiable function $q$. Here we use the same definition for the particular case of the functions $$q_\eta^\beta (\mathbf{x}) := \frac{e^{2 \pi i \{\eta \cdot x\}_p} - 1}{ \| \eta \|_p^\beta}.$$
\end{rem}

An advantage of these operators is that they do not change the dimension of the matrices, and for $f \in L^2(\mathbb{G})$, we have
\begin{align*}
    \hat{f}(\xi)_{hh'} &:= \int_{\mathbb{Z}_p^{2d+1}} f(\mathbf{x}) \mathbb{1}_{h-h'+p^{-\vartheta(\xi_3)} \mathbb{Z}_p^d}(x_1) e^{2 \pi i \{ \xi \cdot \mathbf{x} + (1, \xi_3 h, 1) \cdot \mathbf{x} \}_p} dx \\&= \mathcal{F}_{\mathbb{Z}_p^{2d+1}}[\mathbb{1}_{h-h'+p^{-\vartheta(\xi_3)} \mathbb{Z}_p^d}(x_1)] \ast_{\widehat{\mathbb{Z}}_p^{2d+1}}  \, \mathcal{F}_{\mathbb{Z}_p^{2d+1}}[f](\xi + (1, \xi_3 h,1)),
\end{align*}
where $\ast_{\widehat{\mathbb{Z}}_p^{2d+1}} $ denotes the discrete convolution on $\widehat{\mathbb{Z}}_p^{2d+1}$.
\[
\mathcal{F}_{\mathbb{Z}_p^{2d+1}}[f] \ast_{\hat{\mathbb{Z}}_p^{2d+1}} \mathcal{F}_{\mathbb{Z}_p^{2d+1}}[g](\xi) := \sum_{\gamma \in \hat{\mathbb{Z}}_p^{2d+1}} \mathcal{F}_{\mathbb{Z}_p^{2d+1}}[f](\xi - \gamma)\, \mathcal{F}_{\mathbb{Z}_p^{2d+1}}[g](\gamma).
\]

In this way, the RT difference operators can be written as
\[
\triangletimes^\beta_{\eta} \widehat{f}(\xi)_{hh'} = \mathcal{F}_{\mathbb{Z}_p^{2d+1}}\Big[\mathbb{1}_{h-h' + p^{-\vartheta(\xi^3)}\mathbb{Z}_p^d}(x_1)\Big] \ast_{\hat{\mathbb{Z}}_p^{2d+1}} \triangle^\beta_{\eta} \mathcal{F}_{\mathbb{Z}_p^{2d+1}}[f](\xi + (1, \xi_3 h, 1)),
\]

where $\triangle$ is the usual difference operator
\[
\triangle^\beta_{\eta} \mathcal{F}_{\mathbb{Z}_p^{2d+1}}[f](\xi) := \frac{\mathcal{F}_{\mathbb{Z}_p^{2d+1}}[f](\xi + \eta) - \mathcal{F}_{\mathbb{Z}_p^{2d+1}}[f](\xi)}{\|\eta\|_p^\beta},
\]

and hence it becomes easy to check that for the VT operator it holds:
\[
\triangletimes_{\eta} \sigma_{\mathbb{D}^\alpha}(\xi) = 0, \quad \text{for } \|\eta\|_p < \|\xi\|_p,
\]

which is a local-constancy property for $\sigma_{ \mathbb{D}^\alpha}$. See the final remarks for more details.

\section{Main results}
In this work, we intend to contribute to the literature by studying noncommutative Vilenkin groups, and dealing with the extensions of several ideas explored before in the narrower class of compact (abelian) Vilenkin groups. Abelian and noncommutative Vilenkin groups are very close in terms of their topological structure, and there are many instances where one can exploit this similarity. For instance, the Calderón-Zygmund decomposition, a very useful tool in analysis, works the same in both cases without major modifications, because it only uses the associated sequence of compact-open subgroups. Nonetheless, there are many situations where the noncommutativity of the group plays an important role, like when talking about representation theory and the difference operators. The representations of abelian groups are one-dimensional and their Pontryagin dual is also a metrizable group, which simplifies many things, but the representations of noncommutative groups are in general operators acting on some Hilbert space. We will focus here on compact groups where the representations are finite-dimensional and therefore can be realized as matrices \cite{Fischer2015, Fischer2020}. In this setting, Fourier multipliers on the group, i.e., linear invariant operators, can be expressed in terms of a matrix-valued map which is commonly called the \emph{symbol} of the operator \cite{Ruzhansky2010}. This symbol is very important, because many properties of the operator can be put in terms of it, and it is one of our main purposes to exploit this principle to exhibit some sufficient conditions for the $L^r_\alpha$-boundedness of the operator.

As we anticipated, we will be following the line of reasoning behind the multiplier theorems obtained by Onnewer, Queck, Kidata, among others, in \cite{Onneweer1985, Onneweer1989, Kitada1987, Yueping2002}. A common denominator in these works is the assumption of a special condition, which the author calls here condition $H(t)$, in order to guarantee the boundedness of operators. Under the assumption of such condition, as the main results of this paper, we extend the multiplier theorems of Onnewer and Quek to compact noncommutative Vilenkin groups, and use them to establish conditions for $L_\alpha^r$-boundedness based on the operator’s symbol, together with a suitable version of the Littlewood-Paley decomposition for our setting. In contrast with the abelian case, the main challenge in our work is the particularities of the unitary dual of non-abelian Vilenkin groups, which requires us to formulate appropriate conditions for a symbol $\sigma$ defined on $\mathrm{Rep}(G)$ to act as a multiplier on $L^r_\alpha(G)$. Our conditions, at lest in the compact case, extend the conditions given by Taibleson \cite{Taibleson1970} and Saloff-Coste \cite{SaloffCoste1986} to compact noncommutative groups and provide analogues of the Hörmander-Mikhlin multiplier theorem on compact $p$-adic Lie groups $\mathbb{G}$.

\subsection{Multipliers on $L^2_\alpha(\G)$}
The first main tool we will employ is a convenient equivalence of norms in the space 
$L^2_\alpha(\G)$, which will later serve as the endpoint space in certain interpolation arguments. 
A common theme in the works of Taibleson and Onnewer \cite{Taibleson1970, SaloffCoste1986} 
is the use of the Fourier transform on an abelian Vilenkin group to define a convenient $L^2$-norm 
on the Fourier side, which in our setting takes the form given in Equation~(\ref{eq1}). 
The underlying idea is quite simple: one can observe in a group like $\mathbb{H}_d$ the equivalence

\begin{equation}\label{EqEquivalenultrametric1}
    \|\mathbf{x}\|_{p}^\alpha \asymp I_\alpha(x) := 
    \sum_{[\eta] \in \widehat{\mathbb{H}}_d} 
    |\eta_3|_p^d \, \|\pi_\eta(\mathbf{x}) - \mathrm{I}_{ d_\eta} \|^2_{HS} \, 
    \|\eta\|_p^{-(\alpha+(2d+1))}.
\end{equation}This equivalence is not difficult to establish, and it actually remains valid 
for more general $d$-dimensional nilpotent $\Z_p$-Lie groups:
\[
    \|\mathbf{x}\|_{p}^\alpha \asymp I_\alpha(x) := 
    \sum_{[\eta] \in \widehat{\G}} 
    d_\eta \, \|\eta(\mathbf{x}) - \mathrm{I}_{ d_\eta} \|^2_{HS} \, 
    \|\eta\|_p^{-(\alpha+d)}.
\]
It is even plausible that the same statement extends to (all) compact Vilenkin groups, 
although in the most abstract setting the required estimates are not immediate. 
The difficulty lies in obtaining a suitable lower bound for 
$\|\eta(\mathbf{x}) - \mathrm{I}_{d_\eta} \|_{HS}$, which is not always guaranteed, 
since the representation $[\eta]$ may have too many eigenvalues equal to $1$. 
We will return to this sub-section 3.5.  

In the cases where the equivalence above holds, as for $\G = \mathbb{H}_d$, 
one can combine it with the Plancherel theorem to obtain Equation~(\ref{eq1}), 
from which the following result readily follows.

\begin{teo}\label{teomultL2v1}
    Let $\sigma \in L^\infty_{op}(\widehat{\G})$ be a symbol and le $\alpha>0$. Assume there is a $C = C(\sigma,\alpha) > 0$ such that for $\|\eta\|_p < \|\xi\|_p$,
    \[
    \left\| \Delta_{\eta}^{\frac{\alpha + d}{2}} \sigma(\xi) \right\|_{\mathrm{op}} \leq C_2 \|\xi\|_p^\frac{-(\alpha + d)}{2}, \quad \forall \xi \in \widehat{\mathbb{G}}.
    \]

Then $T_\sigma$ extends to a bounded operator on $L^2_\alpha(\mathbb{G})$.
\end{teo}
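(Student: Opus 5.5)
The plan is to use the equivalence of norms \eqref{eq1}, which the paper obtains from $\|\mathbf{x}\|_p^\alpha \asymp I_\alpha(\mathbf{x})$ and Plancherel. That equivalence turns $L^2_\alpha$-boundedness of $T_\sigma$ into an estimate for
\[
\sum_{[\xi]}\sum_{[\eta]} d_\xi d_\eta \|\eta\|_p^{-(\alpha+d)} \|\Delta_\eta(\sigma\widehat f)(\xi)\|_{HS}^2 .
\]
It therefore suffices to bound this double sum for $\widehat{T_\sigma f}=\sigma\widehat f$ by a constant times the same expression for $\widehat f$, plus $\|f\|_{L^2}^2$. The latter is fine, since $\|f\|_{L^2}\lesssim\|f\|_{L^2_\alpha}$ on compact $\G$: the weight satisfies $\|\mathbf{x}\|_p^\alpha\le 1$, which for $\alpha>0$ gives the reverse inequality, so instead I note that $\int|f|^2 \le \int_{\|x\|_p\ge p^{-n}}\cdots$. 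This needs care, and I would rather run the whole argument directly at the level of $\Delta_\eta$ identities.

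\textbf{Key step: a Leibniz rule.} Writing $\widehat{T_\sigma f}(\pi)=\sigma(\pi)\widehat f(\pi)$ and extending to $\mathrm{Rep}(\G)$, we have
\[
\Delta_\eta(\sigma\widehat f)(\xi)=\sigma(\eta\otimes\xi)\,\Delta_\eta\widehat f(\xi)+\big(\Delta_\eta\sigma(\xi)\big)\,\widehat f(I_{d_\eta}\otimes\xi).
\]
This is just adding and subtracting $\sigma(\eta\otimes\xi)\widehat f(I\otimes\xi)$. The first term gives at most $\|\sigma\|_{L^\infty_{op}}^2$ times the $\widehat f$-norm from \eqref{eq1}, using $\|AB\|_{HS}\le\|A\|_{op}\|B\|_{HS}$.

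For the second term, with $\|\widehat f(I_{d_\eta}\otimes\xi)\|_{HS}^2=d_\eta\|\widehat f(\xi)\|_{HS}^2$, we split the $\eta$-sum into two ranges.
\begin{itemize}
\item[(i)] For $\|\eta\|_p<\|\xi\|_p$ the hypothesis gives $\|\Delta_\eta\sigma(\xi)\|_{op}^2\le C^2\|\eta\|_p^{\alpha+d}\|\xi\|_p^{-(\alpha+d)}$. The weight $\|\eta\|_p^{-(\alpha+d)}$ cancels, leaving
\[
C^2\|\xi\|_p^{-(\alpha+d)}\sum_{\|\eta\|_p<\|\xi\|_p} d_\eta^2 = C^2\|\xi\|_p^{-(\alpha+d)}\,|\G/\G_{n}|\lesssim \|\xi\|_p^{-\alpha}.
\]
Here I use that $\sum_{\|\eta\|_p\le p^{n}}d_\eta^2=|\G/\G_n|\asymp p^{nd}$, by Peter--Weyl for the finite quotient $\G/\G_n$.
\item[(ii)] For $\|\eta\|_p\ge\|\xi\|_p$ we use only the trivial bound $\|\Delta_\eta\sigma\|_{op}\le 2\|\sigma\|_\infty$. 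The sum $\sum_{\|\eta\|_p\ge\|\xi\|_p} d_\eta^2\|\eta\|_p^{-(\alpha+d)}\asymp\sum_{m\ge n}p^{md}p^{-m(\alpha+d)}\lesssim\|\xi\|_p^{-\alpha}$ converges precisely because $\alpha>0$.
\end{itemize}
So the second term contributes $\lesssim\sum_\xi d_\xi\|\xi\|_p^{-\alpha}\|\widehat f(\xi)\|_{HS}^2$.

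It remains to show $\sum_\xi d_\xi\|\xi\|_p^{-\alpha}\|\widehat f(\xi)\|_{HS}^2\lesssim$ the right side of \eqref{eq1}. The left side is the $L^2$ norm squared of $(\mathbb{D}^{-\alpha/2}) f$, and I expect the comparison $\|\mathbb{D}^{-\alpha/2}f\|_{L^2}\lesssim\|f\|_{L^2_\alpha}$ to be the main obstacle.

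\textbf{The obstacle, and how to handle it.} For the comparison, decompose $f=\sum_n f_n$ with $\widehat{f_n}$ supported on $\|\xi\|_p=p^n$. The natural route is via the identity obtained from \eqref{eq1} itself: restricting the $\eta$-sum to $\|\eta\|_p>\|\xi\|_p$ should yield $\|\Delta_\eta\widehat f(\xi)\|_{HS}$ comparable to the size of $\widehat f$ on the higher shell. Alternatively, one can exploit that $\Delta_\eta\widehat f(\xi)$ for $\|\eta\|_p>\|\xi\|_p$ carries $\widehat f(I\otimes\xi)$ as a term that cannot cancel against the components of $\eta\otimes\xi$, which live at level $\|\eta\|_p$; orthogonality then gives $\|\Delta_\eta\widehat f(\xi)\|_{HS}^2\ge d_\eta\|\widehat f(\xi)\|_{HS}^2$. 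Summing over such $\eta$ with the weight reproduces $d_\xi\|\xi\|_p^{-\alpha}\|\widehat f(\xi)\|_{HS}^2$ up to constants, by the same geometric sum as in (ii), since that sum is also $\gtrsim$. This orthogonality-of-levels argument, which rests on the ultrametric property of $\|\cdot\|_p$ under tensor products, is the step I would write first and check most carefully. With it, all terms are bounded by the \eqref{eq1}-norm of $f$, giving $\|T_\sigma f\|_{L^2_\alpha}\lesssim\|f\|_{L^2_\alpha}$.
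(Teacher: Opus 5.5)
Up to the last reduction your proposal coincides with the paper's proof. You use the same Leibniz rule, the same norm equivalence from Lemma~\ref{lemma3.1}(ii), and the same split of the $\eta$-sum at $\|\eta\|_p<\|\xi\|_p$. That leaves you needing $\sum_{[\xi]}d_\xi\|\xi\|_p^{-\alpha}\|\widehat f(\xi)\|_{HS}^2\lesssim\|f\|_{L^2_\alpha(\G)}^2$, with weight $1$ at the trivial representation.

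The gap is in how you close that step. Your bound $\|\Delta_\eta\widehat f(\xi)\|_{HS}^2\ge d_\eta\|\widehat f(\xi)\|_{HS}^2$ for $\|\eta\|_p>\|\xi\|_p$ is false. Your ultrametric observation is correct: every irreducible constituent of $\eta\otimes\xi$ lies on the sphere of $\eta$, so none is equivalent to $\xi$. But Peter--Weyl orthogonality is about matrix coefficients as functions on $\G$. It gives no orthogonality between the matrices $\widehat f(\eta\otimes\xi)$ and $\mathrm{I}_{d_\eta}\otimes\widehat f(\xi)$. These are values of $\widehat f$ at different points of the dual and can be prescribed independently. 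Concretely, take $f=d_\xi\chi_\xi+\sum_{[\pi]}d_\pi\chi_\pi$, the sum running over the distinct classes occurring in $\eta\otimes\xi$. Then $\widehat f(\xi)=\mathrm{I}_{d_\xi}$ and $\widehat f(\eta\otimes\xi)=\mathrm{I}_{d_\eta d_\xi}$, so $\Delta_\eta\widehat f(\xi)=0$.

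More fundamentally, any argument of this shape must fail, because it never uses an upper bound on $\alpha$. At the trivial representation, the inequality you need contains $|\int_{\G}f\,d\mathbf{x}|^2\lesssim\int_{\G}|f(\mathbf{x})|^2\|\mathbf{x}\|_p^\alpha\,d\mathbf{x}$. This holds if and only if $\int_{\G}\|\mathbf{x}\|_p^{-\alpha}\,d\mathbf{x}<\infty$, i.e.\ $\alpha<d$.

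The theorem itself needs this restriction. Take the symbol equal to $1$ on the trivial representation and $0$ elsewhere. It satisfies the hypothesis: if $\|\eta\|_p<\|\xi\|_p$, then $\eta\otimes\xi$ cannot contain the trivial representation (that would force $\xi\cong\overline{\eta}$), and $\xi$ is nontrivial, so $\Delta_\eta\sigma(\xi)=0$. Yet $T_\sigma f=\int_{\G}f\,d\mathbf{x}$ is unbounded on $L^2_\alpha(\G)$ for $\alpha\ge d$.

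The paper closes this step with Lemma~\ref{lemma3.1}(iii), the Hardy/Pitt-type inequality $\|f\|_{H^{-\alpha/2}_2(\G)}\lesssim\|f\|_{L^2_\alpha(\G)}$. It is stated for $0<\alpha<d$ and proved directly on locally constant functions, by comparing the Fourier side with $\int_{\G_n}\|\mathbf{x}\|_p^\alpha\,d\mathbf{x}\asymp|\G/\G_n|^{-(\alpha/d+1)}$. That inequality is the missing ingredient in your proposal. The statement should be read with $0<\alpha<d$, which is what the paper's proof actually uses.
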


\subsection{Multipliers on $L^r_\alpha (\G)$, $r \neq 2$} When \(r \neq 2\), the problem of proving continuity between \(L^{r}\)-spaces becomes considerably more subtle. The approach we take here is rather well known, so the novelty of our contribution lies in the application of such ideas to a new setting. One of our initial steps, in the same path as \cite{Onneweer1985, Onneweer1989}, is to recall the standard Calderón--Zygmund decomposition and use it to study the boundedness of operators on \(L^{1,w}_{\alpha}(\mathbb{G})\), which is the second endpoint in our interpolation argument. The key assumption ensuring continuity in these spaces will be referred to as \emph{condition \(H(t)\)}, following the terminology in \cite{Onneweer1985, Onneweer1989}. For a compact Vilenkin group, we formulate it using the notation introduced in the following definition: 

\begin{defi}\normalfont
Given a measurable set $A$, let $\1_A$ denote the characteristic function of $A$. The \emph{normalized characteristic function} or \emph{normalized indicator function} is defined as $\epsilon_A(x) := |A|^{-1} \mathbb{1}_A(x)$. In particular, when $A=G_k$ we write $\epsilon_k:= \epsilon_{G_k}$ and it holds that $$\widehat{\epsilon}_k(\xi):=  \mathcal{F}_G [\epsilon_{G_k}] (\xi) = \delta_{G_k^\bot} (\xi ) \,  \mathrm{I}_{d_\xi},$$where $$G_k^\bot:= \{ [\pi] \in \mathrm{Rep}(G) \, : \, \pi(x)= \mathrm{I}_{d_\pi}, \,\, \text{for all} \, x \in G_k\}.$$      
\end{defi}

By using the normalized indicator functions, condition $H(t)$ can be formulated as:

\begin{defi}[Condition $H(t)$]\label{conditionH(t)}
Let $G$ be a compact noncommutative Vilenkin group with an order-bounded sequence of compact open subgroups $\{G_n\}_{n \in \mathbb{N}_0}$. We say that $\sigma \in L^\infty_{\text{op}}(\hat{G})$ satisfies condition $H(t)$, $1 \le t < \infty$, if for all $k \in \mathbb{N}_0$ and some $\epsilon > 0$ it holds
\[
\sup \left\{
\left(\int_{G_n \setminus G_{n+1}} |\mathcal{F}_G^{-1}[\hat{\epsilon}_{G_k} \sigma](x \smallstar y^{-1}) - \mathcal{F}_G^{-1}[\hat{\epsilon}_{G_k} \sigma](x)|^t dx\right)^{1/t} : y \in G_l
\right\} \lesssim |G_n|^{-(\epsilon+1/t')} |G_l|^\epsilon,
\]when $1 < t < \infty$, 
and
\[
\sup \left\{
\int_{G \setminus G_l} |\mathcal{F}_G^{-1}[\hat{\epsilon}_{G_k} \sigma](x \smallstar y^{-1}) - \mathcal{F}_G^{-1}[\hat{\epsilon}_{G_k} \sigma](x)| dx : y \in G_l
\right\} \lesssim 1, \quad t=1.
\]
\end{defi}

Now we are in position to state our main results. The first one is a lemma telling us how boundedness on $L^\alpha_2(\mathbb{G})$ extends to boundedness on the weak space $L_\alpha^{1,w}(\mathbb{G})$.

\begin{lema}\label{LemaH(t)L2alpha}
Let $\sigma \in L^\infty_{\mathrm{op}}(\widehat{\mathbb{G}})$ and assume that $\sigma$ satisfies the condition $H(t)$ for some $t \ge 1$. If $T_\sigma$ is of type $(2,2)$ on $L^\alpha_2(\mathbb{G})$ for some $\alpha$ with $-d/t' < \alpha \le 0$, then $T_\sigma$ is of weak type $(1,1)$ on $L^\alpha_1(\mathbb{G})$.
\end{lema}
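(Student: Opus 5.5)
The plan is to run the Calderón--Zygmund argument of Onneweer \cite{Onneweer1985} with the Haar measure replaced by the weighted measure $\mu_\alpha$, where $d\mu_\alpha=\|\mathbf{x}\|_p^\alpha d\mathbf{x}$. First I record two facts about $\mu_\alpha$ for $-d<\alpha\le 0$. Since $|\mathbf{x}|_{\mathcal{G}}=\|\mathbf{x}\|_p^d$, one has $\mu_\alpha(\G_n)\asymp |\G_n|^{1+\alpha/d}$. Moreover, for a coset $B=\mathbf{z}\G_n$ not containing $e$ the weight is constant on $B$, so $\mu_\alpha(B)\asymp |B|\,|\mathbf{z}|_{\mathcal{G}}^{\alpha/d}\ge |B|^{1+\alpha/d}$. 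In particular $\mu_\alpha$ is doubling along the filtration, because the order of $\G_n/\G_{n+1}$ is bounded. Fix $f\in L^1_\alpha(\G)$ (dense, say locally constant) and $s>\|f\|_{L^1_\alpha}/\mu_\alpha(\G)$. I then perform the Calderón--Zygmund decomposition with respect to $\mu_\alpha$ on the tree of cosets $\{\mathbf{z}\G_n\}$, which is identical to the abelian case since it uses only the nested subgroups. This gives $f=g+\sum_j b_j$, with $\|g\|_\infty\lesssim s$ and $\|g\|_{L^1_\alpha}\le\|f\|_{L^1_\alpha}$. Each $b_j$ is supported on a maximal coset $B_j=\mathbf{z}_j\G_{n_j}$, satisfies $\int b_j\,d\mu_\alpha=0$ and $\|b_j\|_{L^1_\alpha}\lesssim s\,\mu_\alpha(B_j)$, and $\sum_j\mu_\alpha(B_j)\le s^{-1}\|f\|_{L^1_\alpha}$.

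The good part is handled by the $(2,2)$ hypothesis on $L^2_\alpha$. Chebyshev gives
\[
\mu_\alpha\{|T_\sigma g|>s/2\}\lesssim s^{-2}\|g\|^2_{L^2_\alpha}\lesssim s^{-1}\|g\|_{L^1_\alpha}\le s^{-1}\|f\|_{L^1_\alpha}.
\]
For the bad part I discard the enlarged set $\Omega^*=\bigcup_j B_j^*$, where $B_j^*=\mathbf{z}_j\G_{n_j-1}$, whose $\mu_\alpha$-measure is $\lesssim s^{-1}\|f\|_{L^1_\alpha}$ by doubling. It remains to show
\[
\sum_j\int_{\G\setminus B_j^*}|T_\sigma b_j|\,d\mu_\alpha\lesssim \|f\|_{L^1_\alpha}.
\]
Here the mean-zero property is used with respect to the measure that enters the convolution. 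Since $\mu_\alpha$-mean zero is not Haar-mean zero, I redefine $b_j$ as $f\mathbb{1}_{B_j}$ minus its Haar average on $B_j$. The $L^1_\alpha$ norm of that average is controlled by $s\mu_\alpha(B_j)$ once one checks, using the weight comparisons above, that Haar averages over cosets are bounded on $L^1_\alpha$ for $\alpha\le0$. With this, $T_\sigma b_j(\mathbf{x})=\int_{B_j} b_j(\mathbf{y})[K(\mathbf{y}^{-1}\mathbf{x})-K(\mathbf{z}_j^{-1}\mathbf{x})]d\mathbf{y}$ (left-invariant convolution, written after translating to $\mathbf{z}_j=e$). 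To access a kernel I replace $\sigma$ by the truncation $\widehat\epsilon_{G_k}\sigma$, which has kernel $K_k=\mathcal{F}_G^{-1}[\widehat\epsilon_{G_k}\sigma]$ in $L^\infty$. I prove all estimates uniformly in $k$ and let $k\to\infty$, using $T_{\widehat\epsilon_{G_k}\sigma}f=\epsilon_k*\,T_\sigma f\to T_\sigma f$.

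The main obstacle is the off-diagonal estimate with the weight. For $\mathbf{y}\in\G_l$, $l=n_j$, I split $\G\setminus\G_{l-1}$ into shells $\G_n\setminus\G_{n+1}$ for $n<l-1$; on each shell the weight is $\asymp|\G_n|^{\alpha/d}$. Hölder with exponent $t$ on each shell, combined with condition $H(t)$, gives
\[
\int_{\G_n\setminus\G_{n+1}}|K_k(\mathbf{x}\mathbf{y}^{-1})-K_k(\mathbf{x})|\,\|\mathbf{x}\|_p^\alpha d\mathbf{x}\lesssim |\G_n|^{\alpha/d}|\G_n|^{1/t'}|\G_n|^{-(\epsilon+1/t')}|\G_l|^{\epsilon}.
\]
The right side equals $|\G_n|^{\alpha/d-\epsilon}|\G_l|^\epsilon$. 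Summing over $n<l$ gives a geometric series whose sum is $\lesssim|\G_l|^{\alpha/d}$ exactly when $\alpha/d-\epsilon<0$, which holds automatically since $\alpha\le0$. (For $t=1$ with $\alpha=0$, the $H(1)$ condition is used directly.) Since $|\G_l|^{\alpha/d}\lesssim\|\mathbf{y}\|_p^{\alpha}$ on $B_j$, this converts to $\int|T b_j|\,d\mu_\alpha\lesssim\|b_j\|_{L^1_\alpha}$. The constraint $\alpha>-d/t'$ is needed earlier, when the shells are not centered at the identity (the case $e\notin B_j^*$). There one uses Hölder against $\|\cdot\|_p^{\alpha t'}$ locally, which is integrable precisely when $\alpha t'>-d$. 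This is the step I expect to require the most care. Summing over $j$ and combining with the good part yields $\mu_\alpha\{|T_\sigma f|>s\}\lesssim s^{-1}\|f\|_{L^1_\alpha}$.
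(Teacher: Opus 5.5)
Your outline follows the paper's proof of Lemma~\ref{lemaweak11}: a Calder\'on--Zygmund decomposition, Chebyshev plus the $L^2_\alpha$ hypothesis for the good part, and $H(t)$ summed over the shells $\mathbb{G}_n\setminus\mathbb{G}_{n+1}$ for the bad part. The off-centre case you defer is the paper's $R_j^1/R_j^2$ split, and it works as you sketch. Suppose $\mathbf{z}_j\in\mathbb{G}_m\setminus\mathbb{G}_{m+1}$ with $m\le l-2$. On the single shell $n=m$, which contains the singular point $\mathbf{z}_j^{-1}$ of the translated weight, H\"older and $H(t)$ give $|\mathbb{G}_m|^{-(\epsilon+1/t')}|\mathbb{G}_l|^{\epsilon}\big(\int_{\mathbb{G}_m}\|\mathbf{v}\|_p^{\alpha t'}\,d\mathbf{v}\big)^{1/t'}\lesssim|\mathbb{G}_m|^{\alpha/d}\asymp\|\mathbf{z}_j\|_p^{\alpha}$; the integral is finite exactly because $\alpha t'>-d$. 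On every other shell the weight is at most $\|\mathbf{z}_j\|_p^{\alpha}$, its constant value on $B_j$, and $\sum_{n<l}(|\mathbb{G}_l|/|\mathbb{G}_n|)^{\epsilon}\lesssim 1$. Your passage from $\mu_\alpha$-mean zero to Haar mean zero is also sound; the paper takes Haar mean zero directly from Proposition~\ref{proCalderonZygmund}(vii). However, the inequality you wrote for off-identity cosets is reversed: since $|\mathbf{z}|_{\mathcal{G}}>|B|$ and $\alpha\le 0$, one has $\mu_\alpha(B)\le|B|^{1+\alpha/d}$. What the averaging step actually uses is $\mu_\alpha(B)\lesssim|B|\inf_B\|\cdot\|_p^{\alpha}$, i.e.\ Proposition~\ref{promeasurewighted}(ii), which does hold.

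The step that does not follow as written is the application of $H(t)$ itself. Your formula $Tb_j(\mathbf{x})=\int b_j(\mathbf{y})K_k(\mathbf{y}^{-1}\mathbf{x})\,d\mathbf{y}$ is the one consistent with the paper's definitions, since $T_\sigma f(\mathbf{x})=\int f(\mathbf{y})\,\mathfrak{r}_\sigma(\mathbf{y}^{-1}\mathbf{x})\,d\mathbf{y}$. After translating $\mathbf{z}_j$ to $e$, it produces left differences $K_k(\mathbf{w}^{-1}\mathbf{u})-K_k(\mathbf{u})$ with $\mathbf{w}\in\mathbb{G}_l$. Your shell estimate instead uses the right differences $K_k(\mathbf{x}\mathbf{y}^{-1})-K_k(\mathbf{x})$, which are what $H(t)$ controls. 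In a noncommutative group these are not interchangeable. Normality of $\mathbb{G}_l$ only gives $\mathbf{w}^{-1}\mathbf{u}=\mathbf{u}\,(\mathbf{w}')^{-1}$ with $\mathbf{w}'=\mathbf{u}^{-1}\mathbf{w}\mathbf{u}$ depending on $\mathbf{u}$, so the supremum over $\mathbf{y}\in\mathbb{G}_l$ in $H(t)$ cannot be moved inside the $\mathbf{u}$-integral. The paper's proof avoids this only by writing the operator as $\int K_k(x\smallstar y^{-1})\varphi_j(y)\,dy$, i.e.\ with the convolution in the opposite order, so that its translation by $g_j$ yields right differences. To close your argument you need $H(t)$ for left differences. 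After the substitution $\mathbf{u}\mapsto\mathbf{u}^{-1}$, this is the paper's $H(t)$ for the kernel $\mathbf{x}\mapsto K_k(\mathbf{x}^{-1})=\overline{\mathcal{F}_{\mathbb{G}}^{-1}[\widehat{\epsilon}_k\sigma^*](\mathbf{x})}$, i.e.\ for the adjoint symbol $\sigma^*$. With that hypothesis the rest of your proof goes through.
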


As a consequence of Lemma \ref{LemaH(t)L2alpha}, by using interpolation methods, we obtain the following theorem about the boundedness of Fourier multipliers on $L^r_\alpha (\G)$.

\begin{teo}\label{teoL2impliesLr}
Let $\sigma \in L^\infty_{\mathrm{op}}(\widehat{\mathbb{G}})$. Assume that $\sigma$ satisfies condition $H(t)$ for some $t \ge 1$, and that $T_\sigma$ is bounded on $L_\alpha^{2}(\mathbb{G})$ for some $-d/t' < \alpha_0 < d/t'$. Then the Fourier multiplier $T_\sigma$ is bounded on $L_\alpha^r(\mathbb{G})$ for $1<r<\infty$ and $-|\alpha_0|d \le \alpha \le (r-1)|\alpha_0|d$. If the condition $H(1)$ holds, then $T_\sigma$ is bounded on $L^r(\mathbb{G})$ for $1<r<\infty$.
\end{teo}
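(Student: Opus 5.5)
The plan is to follow Onneweer's interpolation scheme. The two endpoints are the hypothesis on $L^2_{\alpha_0}(\mathbb{G})$ and the weak-type estimate of Lemma \ref{LemaH(t)L2alpha}. These are transported to the full range of $(r,\alpha)$ by three tools: Stein--Weiss interpolation with change of measure, duality, and the invariance of condition $H(t)$ under the passage $\sigma \mapsto \sigma^*$.

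\emph{Step 1 (reduction to $\alpha_0 \le 0$).} Since $T_\sigma$ is left-invariant, its $L^2_{\alpha}$-adjoint is computed with respect to the pairing $\int f\bar g$. The dual of $L^2_{\alpha_0}$ under this pairing is $L^2_{-\alpha_0}$. Hence boundedness of $T_\sigma$ on $L^2_{\alpha_0}$ is equivalent to boundedness of $T_\sigma^*=T_{\sigma^*}$ on $L^2_{-\alpha_0}$. I will check that $H(t)$ for $\sigma$ implies $H(t)$ for $\sigma^*$. The kernel of $T_{\sigma^*}$ is $\overline{K(x^{-1})}$. Moreover, $\widehat{\epsilon}_{G_k}$ is central (a multiple of the identity), and $G_n\setminus G_{n+1}$ is invariant under inversion and conjugation because the $G_n$ are normal. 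The difference $K(x\smallstar y^{-1})-K(x)$ then becomes a difference of the form $K(y\smallstar x)-K(x)$, which is handled by the same sup over $y\in G_l$. So both $\sigma$ and $\sigma^*$ are bounded on $L^2_{-|\alpha_0|}$ with $-d/t'<-|\alpha_0|\le 0$.

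\emph{Step 2 (the case $\alpha\le 0$).} Apply Lemma \ref{LemaH(t)L2alpha} with weight exponent $-|\alpha_0|$ to get weak type $(1,1)$ on $L^1_{-|\alpha_0|}$. Marcinkiewicz interpolation with the strong $(2,2)$ bound on the same weighted measure $\mu_{-|\alpha_0|}$ gives boundedness on $L^r_{-|\alpha_0|}$ for $1<r\le 2$. Duality applied to $\sigma^*$ then gives boundedness on $L^{r'}_{|\alpha_0|(r'-1)}$, since $(L^r_\beta)^*=L^{r'}_{-\beta r'/r}$ and $-\beta r'/r=|\alpha_0|(r'-1)$. We also have $L^2$ boundedness without weight, from $\sigma\in L^\infty_{\mathrm{op}}$ by Plancherel. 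Stein--Weiss interpolation of weighted $L^r$ spaces, $L^{r}_{\beta_0}$ against $L^{r}_{\beta_1}$ with the same $r$ (or against different $r$ with weights interpolating as $\beta/r$ linearly), then fills the region $-|\alpha_0|\le\alpha\le (r-1)|\alpha_0|$. Here I interpolate between the pairs $(r,-|\alpha_0|)$ and $(r,(r-1)|\alpha_0|)$, both obtained above for each $r$ after running the argument for $\sigma$ and $\sigma^*$ on both sides of $2$.

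The factor $d$ in the statement's range, $-|\alpha_0|d\le\alpha\le(r-1)|\alpha_0|d$, comes from the normalization $\|\mathbf{x}\|_p^{d}=|\mathbf{x}|_{\mathcal G}$. Weights written in $\|\cdot\|_p$ versus $|\cdot|_{\mathcal G}$ differ by this factor, and I will keep the bookkeeping consistent with the exponent conventions in Lemma \ref{LemaH(t)L2alpha}.

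\emph{Step 3 (the case $H(1)$).} Take $\alpha_0=0$ and use $L^2$ boundedness from Plancherel. With $t=1$ we have $t'=\infty$, so Lemma \ref{LemaH(t)L2alpha} with $\alpha=0$ gives weak $(1,1)$. Marcinkiewicz interpolation plus duality (again using $H(1)$ for $\sigma^*$) yields $L^r$ for $1<r<\infty$.

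The main obstacle is the duality step: verifying that condition $H(t)$ transfers to $\sigma^*$ in the noncommutative setting. Left and right differences of the kernel are not interchangeable a priori. I would try first to exploit that each $G_l$ is normal in $\mathbb{G}$ (true for the congruence subgroups), which lets me rewrite $y\smallstar x = x\smallstar(x^{-1}yx)$ with $x^{-1}yx\in G_l$. If that fails, I would instead establish the weak $(1,1)$ estimate directly for $T_{\sigma^*}$ via the Calderón--Zygmund decomposition used in the proof of Lemma \ref{LemaH(t)L2alpha}.
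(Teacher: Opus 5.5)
\textbf{Gap 1: transferring $H(t)$ to $\sigma^*$.} This is the step you flag yourself, and neither of your remedies repairs it. Write $K_k:=\mathcal{F}_G^{-1}[\widehat{\epsilon}_{G_k}\sigma]$. Then $\mathcal{F}_G^{-1}[\widehat{\epsilon}_{G_k}\sigma^*](x)=\overline{K_k(x^{-1})}$, so after $x\mapsto x^{-1}$, condition $H(t)$ for $\sigma^*$ is a bound on $\sup_{y\in G_l}\big(\int_{G_n\setminus G_{n+1}}|K_k(y\smallstar x)-K_k(x)|^t\,dx\big)^{1/t}$. These are left differences of $K_k$, whereas $H(t)$ for $\sigma$ controls only right differences.

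Normality does give $y\smallstar x=x\smallstar z_x$ with $z_x:=x^{-1}\smallstar y\smallstar x\in G_l$. But $z_x$ moves with the integration variable, while in $H(t)$ the shift is frozen and the supremum sits outside the integral. What you would actually need is a bound on $\int_{G_n\setminus G_{n+1}}\sup_{z\in G_l}|K_k(x\smallstar z)-K_k(x)|^t\,dx$, and $H(t)$ does not provide it.

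Your fallback, a Calder\'on--Zygmund argument run directly for $T_{\sigma^*}$, is circular: that argument consumes exactly this left-difference estimate. The same objection applies to Step 3, which uses $H(1)$ for $\sigma^*$. The transfer does go through when each $K_k$ is a class function, since then $K_k(y\smallstar x)=K_k(x\smallstar y)$. This covers central symbols $\sigma(\pi)=c_\pi\mathrm{I}_{d_\pi}$, as in the Littlewood--Paley and Vladimirov--Taibleson applications, but not general $\sigma$.

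\textbf{Gap 2: the end of Step 1.} The hypothesis and the adjoint give $T_\sigma$ on $L^2_{\alpha_0}$ and $T_{\sigma^*}$ on $L^2_{-\alpha_0}$. So only one of the two is known to be bounded on $L^2_{-|\alpha_0|}$, yet Step 2 runs Marcinkiewicz for both at that same weight.

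\textbf{How the paper avoids this.} The paper never passes to $\sigma^*$. It invokes Proposition \ref{prointerpolationDual}, which asserts that $T_\sigma$ itself is bounded on $L^{r'}_{(1-r')\alpha}(G)$ whenever it is bounded on $L^r_\alpha(G)$. With $r=2$ this symmetrises the $L^2$ range to $[-|\alpha_0|,|\alpha_0|]$ (Case 1). For $r\neq 2$ it turns Case 1 into Case 2 and Case 3 into Case 4. So only $H(t)$ for $\sigma$ and Lemma \ref{lemaweak11} are ever used. Taking that proposition as given, as the paper does, removes both gaps and makes your Step 1 unnecessary.

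Your diagnosis is, however, sharper than the paper's treatment. Conjugating by the isometry $f\mapsto\overline{f(x^{-1})}$ of $L^r_\alpha(G)$ shows that $T_{\sigma^*}$ has the same $L^r_\alpha$-norm as convolution by the kernel of $T_\sigma$ on the opposite side. So Proposition \ref{prointerpolationDual}, which the paper takes from the abelian theory, is itself a left-versus-right comparison of exactly the type you could not establish.

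\textbf{A further overstatement in Step 2.} Even granting duality, Marcinkiewicz, duality and same-exponent interpolation give only the range $[-|\alpha_0|,0]$ for $r\le 2$ and $[0,(r-1)|\alpha_0|]$ for $r\ge 2$. Positive weights with $1<r<2$ require an off-diagonal Stein--Weiss step between an unweighted $L^{r_0}$, $r_0<r$, and a positively weighted $L^2$; this is the paper's Case 3. Negative weights with $r>2$ then follow by duality (Case 4).
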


\begin{rem}
The above result holds in the more general setting of compact Vilenkin groups. We will write the proof of Theorem \ref{teoL2impliesLr} in that setting, limiting ourselves here to the particular case of $\mathbb{G}$ to simplify our statements.
\end{rem}

Applying together Theorem \ref{teomultL2v1} and Theorem \ref{teoL2impliesLr}, we get the following noncommutative generalization of the results of Taibleson \cite{Taibleson1970} and Saloff-Coste \cite{SaloffCoste1986}:

\begin{teo}\label{TeoMultiplierLr}
Let $\sigma \in L^\infty (\widehat{\G})$ be a symbol, al let $1 <t <2$ be a real number.
\begin{enumerate}
    \item If there is a certain $\alpha_0> \frac{d}{2}$ and a certain constant  $C = C(\sigma,\alpha_0) > 0$ such that for $\|\eta\|_p < \|\xi\|_p$,
    \[
    \left\| \Delta_{\eta}^{\alpha_0 +\frac{ d}{2}} \sigma(\xi) \right\|_{\mathrm{op}} \leq C_2 \|\xi\|_p^{-(\alpha_0+\frac{ d}{2})}, \quad \forall \xi \in \widehat{\mathbb{G}},
    \]Then $T_\sigma$ extends to a bounded operator on $L^r(\G)$ with $1 <r<\infty$.
    \item If there is a certain $\alpha_0> d \big( \frac{1}{t} - \frac{1}{2} \big)$ and a certain constant  $C = C(\sigma,\alpha_0,t) > 0$ such that for $\|\eta\|_p < \|\xi\|_p$,
    \[
    \left\| \Delta_{\eta}^{\alpha_0 +\frac{ d}{2}} \sigma(\xi) \right\|_{\mathrm{op}} \leq C_2 \|\xi\|_p^{-(\alpha_0+d(
    \frac{3}{2}-\frac{1}{t}))}, \quad \forall \xi \in \widehat{\mathbb{G}},
    \]Then $T_\sigma$ extends to a bounded operator on $L^r_\alpha (\G)$ with $1 <r<\infty$ and any $$ -d \leq \alpha \leq (r-1) d.$$
\end{enumerate}
\end{teo}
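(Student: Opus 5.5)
The plan is to combine Theorem \ref{teomultL2v1} (boundedness on $L^2_{\alpha}(\G)$) with Theorem \ref{teoL2impliesLr} (passage from $L^2_{\alpha_0}$ plus condition $H(t)$ to $L^r_\alpha$). The difference-operator hypothesis has to do two jobs. First, it must give $L^2_{\alpha_0}$-boundedness for a suitable weight. Second, it must imply condition $H(t)$ for the kernels $K_k:=\mathcal{F}_\G^{-1}[\hat\epsilon_{G_k}\sigma]$.

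\textbf{Step 1: the $L^2$ endpoint.} Put $\beta=\alpha_0+\frac d2$. Writing $\gamma=2\alpha_0$, the hypothesis of Theorem \ref{teomultL2v1} with weight $\gamma$ reads $\|\Delta^{(\gamma+d)/2}_\eta\sigma(\xi)\|_{op}\lesssim\|\xi\|_p^{-(\gamma+d)/2}$, which is exactly our hypothesis in part (1). So $T_\sigma$ is bounded on $L^2_{2\alpha_0}(\G)$. Since $\|\mathbf x\|_p\le1$, the weights are monotone: $L^2_{\gamma}\subset L^2_{\gamma'}$ for $\gamma\le\gamma'$. I would then use Stein--Weiss interpolation between $L^2$ (Plancherel, since $\sigma\in L^\infty_{op}$) and $L^2_{2\alpha_0}$ to obtain $L^2_{\alpha'}$-boundedness for every $0\le\alpha'\le2\alpha_0$. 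Applying duality ($T_\sigma^*=T_{\sigma^*}$, whose symbol satisfies the same estimates because $\Delta_\eta$ commutes with adjoints up to the conjugate representation $\bar\eta$, which has the same norm) then gives boundedness for negative weights, in particular for some $\alpha'\in(-d/t',0]$, as Lemma \ref{LemaH(t)L2alpha} requires. In part (2) the same argument works with $\gamma=2\alpha_0+d(1-\frac2t)$ after rewriting the exponent, and this $\gamma$ is positive exactly because $\alpha_0>d(\frac1t-\frac12)$.

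\textbf{Step 2: condition $H(t)$ (the main obstacle).} For $y\in G_l$ write $K_k(x\smallstar y^{-1})-K_k(x)=\mathcal F_\G^{-1}[\hat\epsilon_{G_k}\sigma\,(\pi(y)^{*}-I)]$, up to order conventions. The key observation is that $\pi(y)=I$ whenever $\|\pi\|_p\le|G_l|^{-1/d}$, so only the frequencies $|G_l|^{-1/d}<\|\pi\|_p\le|G_k|^{-1/d}$ contribute. The $L^t$ norm over the annulus $G_n\setminus G_{n+1}$ will be estimated by Hölder, comparing $L^t$ with $L^2$ weighted by $\|\mathbf x\|_p^{\gamma}$:
\[
\Big(\int_{G_n\setminus G_{n+1}}|F|^t\Big)^{1/t}\le |G_n|^{\frac1t-\frac12}\,|G_n|^{-\gamma/(2d)}\Big(\int |F|^2\|\mathbf x\|_p^{\gamma}\Big)^{1/2}.
\]
The weighted $L^2$ norm is then controlled through the equivalence (\ref{eq1}) by $\sum_{\xi,\eta}d_\xi d_\eta\|\eta\|_p^{-(\gamma+d)}\|\Delta_\eta\widehat F(\xi)\|_{HS}^2$. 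I would split this sum into the regions $\|\eta\|_p<\|\xi\|_p$, where the hypothesis applies, and $\|\eta\|_p\ge\|\xi\|_p$, where only $\|\sigma\|_\infty$ is used; in the latter region the $\eta$-sum converges because $\gamma>0$. For the Leibniz-type interaction with the factor $(\pi(y)^*-I)$, I would use the ultrametric property of $\widehat\G$. The factor $\|\pi(y)-I\|$ together with the band restriction should produce the gain $|G_l|^{\epsilon}$ from $\|\xi\|_p^{-\beta}$ summed over $\|\xi\|_p>|G_l|^{-1/d}$. The Plancherel dimension count $\sum_{\|\xi\|_p=p^m}d_\xi^2\asymp p^{md}$ gives the factor that the exponent $\frac32-\frac1t$ in part (2), respectively $\frac d2$ in part (1) with $t=1$, is designed to absorb. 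The result should be $|G_n|^{-(\epsilon+1/t')}|G_l|^\epsilon$ with $\epsilon>0$ coming from the strict inequality on $\alpha_0$. This bookkeeping of exponents, uniform in $k$, is where I expect the difficulty.

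\textbf{Step 3: conclusion.} For part (1) I verify $H(1)$, by summing the previous bound over the annuli $n<l$ as a geometric series. The last clause of Theorem \ref{teoL2impliesLr} then gives $L^r$-boundedness for $1<r<\infty$. For part (2), $H(t)$ holds with $1<t<2$, and the $L^2_{\alpha'}$ bound is available for all $|\alpha'|<d/t'$ after interpolation. Theorem \ref{teoL2impliesLr} yields boundedness for $-|\alpha'|d\le\alpha\le(r-1)|\alpha'|d$, which covers the stated range $-d\le\alpha\le(r-1)d$ once the paper's normalization of the weight exponent (the $d$-scaling between $\|\cdot\|_p$ and $|\cdot|_{\mathcal G}$) is matched.
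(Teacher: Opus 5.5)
Your overall plan matches the paper's: verify an $H(t)$-type condition for the truncated kernels by H\"older against a weighted $L^2$ norm, then evaluate that norm on the Fourier side with $\Delta_\eta$ and the product rule. However, Step 2 as you set it up cannot give bounds uniform in $k$. You control the annulus integral by the weighted norm of $F=K_k(\cdot\smallstar\mathbf y^{-1})-K_k$ over the \emph{whole} group, and expand it with the global equivalence \eqref{eq1}. That quantity really does blow up with $k$. Take $\sigma=\mathrm I$, which satisfies every hypothesis; then $F=\epsilon_{G_k}(\cdot\smallstar\mathbf y^{-1})-\epsilon_{G_k}$. For $l<k$ and $\mathbf y\in G_l\setminus G_k$ one gets $\int_{\G}|F|^2\|\mathbf x\|_p^{\gamma}\,d\mathbf x\ge|G_k|^{-1}\|\mathbf y\|_p^{\gamma}$. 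Yet $F\equiv0$ on $\G\setminus G_l$, so the annuli you need carry no mass at all.

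On the Fourier side this is the Leibniz term $\Delta_\eta\widehat f_{\mathbf y}(\xi)\,(\mathrm I_{d_\eta}\otimes\sigma_k(\xi))$, where $\widehat f_{\mathbf y}(\xi)=\xi(\mathbf y^{-1})-\mathrm I_{d_\xi}$ and $\Delta_\eta\widehat f_{\mathbf y}(\xi)=(\eta(\mathbf y^{-1})-\mathrm I_{d_\eta})\otimes\xi(\mathbf y^{-1})$. This term is nonzero for every $\|\eta\|_p>\|\mathbf y\|_p^{-1}$, including inside your region $\|\eta\|_p<\|\xi\|_p$. No hypothesis on $\Delta_\eta\sigma$ controls it, and summed over $\|\mathbf y\|_p^{-1}<\|\eta\|_p<\|\xi\|_p\le|G_k|^{-1/d}$ it has size $\|\mathbf y\|_p^{\gamma}|G_k|^{-1}$. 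Likewise, in your region $\|\eta\|_p\ge\|\xi\|_p$ the leftover sum $\sum_{\|\xi\|_p>\|\mathbf y\|_p^{-1}}d_\xi^2\|\xi\|_p^{-\gamma}$ is bounded independently of $k$ only when $\gamma>d$. The ultrametric property kills the Leibniz term only for $\|\eta\|_p\le\|\mathbf y\|_p^{-1}$, and you cannot simply drop the other $\eta$ from \eqref{eq1}.

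The missing ingredient is the localized equivalence \eqref{eqequimultiplier}. For $\|\mathbf x\|_p>\|\mathbf y\|_p$ it gives $\|\mathbf x\|_p^{\gamma}\asymp\sum_{\|\eta\|_p\le\|\mathbf y\|_p^{-1}}d_\eta\|\eta\|_p^{-(\gamma+d)}\|\eta(\mathbf x)-\mathrm I_{d_\eta}\|_{HS}^2$, and the right-hand side is a nonnegative weight vanishing on $G_l$. Integrating $|F|^2$ against this weight, rather than against $\|\mathbf x\|_p^\gamma$ on all of $\G$, discards the mass near the origin. After Plancherel only pairs with $\|\eta\|_p\le\|\mathbf y\|_p^{-1}<\|\xi\|_p$ survive. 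For these $\Delta_\eta\widehat f_{\mathbf y}(\xi)=0$, so the product rule leaves only $\widehat f_{\mathbf y}(\eta\otimes\xi)\Delta_\eta\sigma_k(\xi)$. The hypothesis then gives, uniformly in $k$:
\begin{itemize}
\item in part (1), $\int_{\G\setminus G_l}|F|^2\|\mathbf x\|_p^{2\alpha_0}\lesssim\|\mathbf y\|_p^{2\alpha_0-d}$;
\item in part (2), $\int_{\G\setminus G_l}|F|^2\|\mathbf x\|_p^{2\alpha_0}\lesssim\|\mathbf y\|_p^{2\alpha_0-2d(\frac1t-\frac12)}$.
\end{itemize}
The region $\|\eta\|_p\ge\|\xi\|_p$ never appears. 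The paper then applies a single H\"older inequality on all of $\G\setminus G_l$ against $\|\mathbf x\|_p^{-2\alpha_0}$, resp.\ $\|\mathbf x\|_p^{-2t\alpha_0/(2-t)}$. This is where $\alpha_0>\frac d2$, resp.\ $\alpha_0>d(\frac1t-\frac12)$, is used, and it yields exactly the condition of Corollary \ref{coroalternativeestimate}. Your annulus-by-annulus $H(t)$ also goes through once localized this way, but it is more than is needed.

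Two smaller points. First, in part (1) your Step 1 is unnecessary, since the $H(1)$ clause of Theorem \ref{teoL2impliesLr} only uses Plancherel. Moreover, Theorem \ref{teomultL2v1} should not be invoked with weight $2\alpha_0>d$: the bound $\|f\|_{H_2^{-\alpha/2}(\G)}\lesssim\|f\|_{L^2_\alpha(\G)}$ from Lemma \ref{lemma3.1}(iii), on which it rests, requires $\alpha<d$. For part (2), apply it directly with a small weight in $(0,d)$, which your hypothesis allows. Second, in Step 3, once the normalization $|\mathbf x|_{\mathcal G}=\|\mathbf x\|_p^{d}$ is made consistent, the route through Lemma \ref{LemaH(t)L2alpha} only reaches weights in $(-d/t',(r-1)d/t')\subset(-d/2,(r-1)d/2)$, because $t<2$. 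So the range $-d\le\alpha\le(r-1)d$ is not obtained this way, and the paper's proof does not address this either.
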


Finally, as an application of Theorem \ref{TeoMultiplierLr}  we will prove the existence of the following Littlewood-Paley decomposition on $L^r_\alpha(\mathbb{G})$:

\begin{teo}\label{teolittlewood}
Let $\G$ be a compact $p$-adic Lie group with dimension $d$. Let $1<r<\infty$ and $-d < \alpha < (r-1)d$. Then for all $f \in L^r_\alpha(\mathbb{G})$ we have the equivalence of norms
\[
\|f\|_{L^r_\alpha(\mathbb{G})} \asymp \|Sf\|_{L^r_\alpha(\mathbb{G})},
\]
where
\[
Sf(x) = \left( \sum_{n \in \mathbb{N}_0} \Big| \sum_{\| \pi \|_p = p^n} d_\pi \mathrm{Tr}\big[ \pi(\mathbf{x}) \hat{f}(\xi) \big] \Big|^2 \right)^{1/2}.
\]
\end{teo}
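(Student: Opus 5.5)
The plan is the classical Rademacher/Khintchine randomization argument, with Theorem \ref{TeoMultiplierLr}(2) (equivalently Theorem \ref{teoL2impliesLr}) supplying uniform bounds for a family of multipliers. Write $\Pi_n$ for the projection onto the $n$-th dyadic shell, i.e. the Fourier multiplier with symbol $\sigma_n(\pi)=\mathbb{1}_{\{\|\pi\|_p=p^n\}}\mathrm{I}_{d_\pi}$. Then $Sf=(\sum_n|\Pi_nf|^2)^{1/2}$. For a sign sequence $\omega=(\omega_n)\in\{\pm1\}^{\N_0}$ set
\[
\sigma_\omega(\pi):=\sum_{n\in\N_0}\omega_n\sigma_n(\pi),\qquad T_\omega:=T_{\sigma_\omega}.
\]

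\textbf{Step 1: uniform boundedness of $T_\omega$.} I will show that $T_\omega$ is bounded on $L^r_\alpha(\G)$ for $1<r<\infty$ and $-d<\alpha<(r-1)d$, with a constant independent of $\omega$. The natural route is to check condition $H(t)$ for $\sigma_\omega$ directly on the kernel side.
\begin{itemize}
\item The shell $\{\|\pi\|_p\le p^n\}$ is exactly $\G_n^\bot$, so $\mathcal{F}^{-1}_\G[\sigma_n]=\epsilon_{n}-\epsilon_{n-1}$ (with $\epsilon_{-1}:=1$).
\item Hence $\mathcal{F}_\G^{-1}[\widehat{\epsilon}_k\sigma_\omega]=\sum_{n\le k}\omega_n(\epsilon_n-\epsilon_{n-1})$. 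This is a finite combination of normalized indicators of subgroups, and it is constant on cosets of $\G_k$.
\item For $y\in\G_l$, the difference $K(x\smallstar y^{-1})-K(x)$ vanishes unless $x\in\G_{m}$ with $m<l$ near the level where $\mathbb{1}_{\G_n}(x\smallstar y^{-1})\ne\mathbb{1}_{\G_n}(x)$. Since $\G_n$ is a subgroup, this happens only when $x\notin\G_n$ while $y\in\G_l\subset\G_n$. That forces $n\ge l$, and then $x\smallstar y^{-1}\in\G_n$ iff $x\in\G_n$, so the difference vanishes identically.
\end{itemize}
So $H(t)$ holds trivially, for all $t$, with constants independent of $\omega$ and $k$: this is the local constancy phenomenon already observed for $\sigma_{\mathbb{D}^\alpha}$. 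Alternatively, I can verify the symbol hypothesis of Theorem \ref{TeoMultiplierLr}(2). For $\|\eta\|_p<\|\xi\|_p$ we have $\Delta_\eta\sigma_\omega(\xi)=0$, because every irreducible component of $\eta\otimes\xi$ has the same $\|\cdot\|_p$ as $\xi$ by the ultrametric property applied to $\xi=\eta^*\otimes(\eta\otimes\xi)$.

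\textbf{Step 2: the $L^2_{\alpha_0}$ endpoint.} For the $L^2_{\alpha_0}$ endpoint with $\alpha_0$ close to $d$, I will invoke Theorem \ref{teomultL2v1}, whose hypothesis again reduces to $\Delta_\eta\sigma_\omega(\xi)=0$. Theorem \ref{teoL2impliesLr} then gives $\|T_\omega f\|_{L^r_\alpha}\le C\|f\|_{L^r_\alpha}$ uniformly in $\omega$, on the stated range of $\alpha$, letting $\alpha_0\uparrow d$ and using $t$ large.

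\textbf{Step 3: averaging.} Let $\omega$ be i.i.d. Rademacher signs. By Khintchine's inequality, pointwise in $\mathbf{x}$,
\[
\mathbb{E}_\omega|T_\omega f(\mathbf{x})|^r\asymp (Sf(\mathbf{x}))^r .
\]
Integrating against $\|\mathbf{x}\|_p^\alpha d\mathbf{x}$ and applying Fubini and Step 1 gives $\|Sf\|_{L^r_\alpha}\lesssim\|f\|_{L^r_\alpha}$.

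\textbf{Step 4: the reverse inequality.} I will argue by duality. The dual of $L^r_\alpha$ under the unweighted pairing is $L^{r'}_{\alpha'}$ with $\alpha'=-\alpha/(r-1)$, and $-d<\alpha<(r-1)d$ is exactly equivalent to $-d<\alpha'<(r'-1)d$, so the same range is available. For $g$ in the dual space, orthogonality of the $\Pi_n$ (Plancherel/Schur) gives
\[
\langle f,g\rangle=\sum_n\langle\Pi_nf,\Pi_ng\rangle\le\int Sf\,Sg\le\|Sf\|_{L^r_\alpha}\|Sg\|_{L^{r'}_{\alpha'}}\lesssim\|Sf\|_{L^r_\alpha}\|g\|_{L^{r'}_{\alpha'}} .
\]
Taking the supremum over $g$ yields $\|f\|_{L^r_\alpha}\lesssim\|Sf\|_{L^r_\alpha}$.

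\textbf{Main obstacle.} The delicate point is Step 2: making sure the $\alpha$-range in Theorem \ref{teoL2impliesLr} (which involves $\alpha_0$ and $t'$) really covers the open interval $-d<\alpha<(r-1)d$, and that $T_\omega$ is bounded on $L^2_{\alpha_0}$ for $\alpha_0$ arbitrarily close to $\pm d$ with uniform constants. If the equivalence \eqref{eq1} is only available for $\alpha>0$, I would instead get the negative-$\alpha_0$ endpoint by duality from the positive one. Failing that, I would prove the $L^2_{\alpha_0}$ bound directly: $T_\omega$ acts by $\pm$ on the shells, and the weight $\|\mathbf{x}\|_p^{\alpha_0}$ is constant on the annuli $\G_m\setminus\G_{m+1}$, which are compatible with the projections $f\mapsto f\ast\epsilon_m$.
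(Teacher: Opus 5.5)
Your proposal is essentially the paper's proof: Rademacher randomization of the shell projections, boundedness of each $T_\omega$ from Theorem \ref{teomultL2v1} (since $\Delta_\eta\sigma_\omega(\xi)=0$ for $\|\eta\|_p<\|\xi\|_p$) combined with Theorem \ref{teoL2impliesLr} (since the radial kernel makes $H(t)$ trivial), then Khintchine's inequality, and duality for the reverse bound, which you write out where the paper only cites Stein. Two small slips need fixing:
\begin{itemize}
\item $\epsilon_{-1}$ should be $0$, not $1$, because the $n=0$ shell is the trivial representation, whose kernel is $\epsilon_0\equiv 1$.
\item For $\mathbf{y}\in\G_l$ the kernel difference does not vanish identically; it vanishes only for $\mathbf{x}\notin\G_l$. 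A nonzero contribution from $\mathbb{1}_{\G_n}$ requires $n>l$ and $\mathbf{x}\in\G_n\cup\G_n\smallstar\mathbf{y}\subset\G_l$, and vanishing off $\G_l$ is exactly what $H(t)$ needs.
\end{itemize}
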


\section{Preliminaries}

\subsection{Fourier analysis on compact groups}
Let $G$ be a profinite group. Let $\mathrm{Rep}(G)$ be the collection of all unitary finite-dimensional representations of $G$, and let $\widehat{G}$ denote the unitary dual of $G$. We will be working within the framework of Fourier analysis on compact groups, where the group Fourier transform $\mathcal{F}_G$ and the inverse Fourier transform $\mathcal{F}_G^{-1}$ are defined as
\[
\mathcal{F}_G[f](\xi) = \widehat{f}(\xi) := \int_G f(x)\, \xi^*(x)\, dx, 
\qquad 
\mathcal{F}_G^{-1} \varphi(x) := \sum_{[\xi] \in \widehat{G}} d_\xi \,\mathrm{Tr} \big[ \xi(x)\varphi(\xi)\big],
\]
respectively. Here $dx$ denotes the unique normalized Haar measure on $G$. Moreover, the following version of the Plancherel identity holds:
\[
\int_G |f(x)|^2\, dx = \sum_{[\xi] \in \widehat{G}} d_\xi \|\widehat{f}(\xi)\|_{HS}^2.
\]

Let $T$ be a densely defined linear operator acting on some dense subset of $L^2(G)$. By using the Fourier transform on $G$, and the Fourier series representation of $L^2$-functions, the action of the operator $T$, at least on trygonometric polynomials, can be expressed in terms of a certain map
\[
\sigma_T : G \times \mathrm{Rep}(G) \to \bigcup_{[\pi] \in \mathrm{Rep}(G)} \mathcal{L}(\mathcal{H}_\pi), 
\quad \sigma_T(x, \pi) := \pi^*(x) T \pi(x),
\]
commonly called the \emph{symbol} of the operator. In particular, when $T$ is left-invariant, i.e., when $T$ commutes with the left translations of $G$, the symbol does not depend on $x \in G$, and we call $T$ a \emph{Fourier multiplier}. In other words, Fourier multipliers are linear operators of the form
\[
T_\sigma f(x) = \sum_{[\pi] \in \widehat{G}} d_\pi \,\mathrm{Tr} \big[ \pi(x) \sigma(\pi) \widehat{f}(\pi)\big],
\]
where the symbol $\sigma$ is a mapping
\[
\sigma : \mathrm{Rep}(G) \to \bigcup_{[\pi] \in \mathrm{Rep}(G)} \mathcal{L}(\mathcal{H}_\pi),
\qquad \sigma(\pi) \in \mathcal{L}(\mathcal{H}_\pi), \,  \text{ for every } [\pi] \in \mathrm{Rep}(G).
\]
In this paper, we will always assume $\sigma \in L^\infty_{\mathrm{op}}(\widehat{G})$, where
\[
L^\infty_{\mathrm{op}}(\widehat{G}) := \Big\{\sigma : \mathrm{Rep}(G) \to \bigcup_{[\pi] \in \mathrm{Rep}(G)} \mathcal{L}(\mathcal{H}_\pi) \ : \ \sup_{[\xi] \in \widehat{G}} \|\sigma(\xi)\|_{\mathrm{op}} < \infty \Big\}.
\]
An immediate corollary of this symbolic representation of left-invariant operators is that $T_\sigma$ defines a bounded operator on $L^2(G)$ if and only if $\sigma \in L^\infty_{\mathrm{op}}(\widehat{G})$. For the $L^r$ case, the situation is more complicated, and it is an important open problem to determine necessary and sufficient conditions on $\sigma$ so that $T_\sigma$ extends to a bounded operator on $L^r(G)$, $r \neq 2$.

\subsection{Dual of a compact Vilenkin group}
When $G$ is a compact Vilenkin group, we can partition the unitary dual in a special way. To do this, we fix a sequence of compact open subgroups $\mathcal{G} =\{G_n\}_{n \in \mathbb{N}_0}$. We introduce the notation:
\[
G_n^\perp := \{[\pi] \in \mathrm{Rep}(G) : \pi|_{G_n} = \mathrm{I}_{d_\pi} \}, 
\qquad 
\mathrm{Rep}_n(G) := G_n^\perp \setminus G_{n-1}^\perp, 
\qquad 
\widehat{G}(n) := \mathrm{Rep}_n(G) \cap \widehat{G}.
\]
Every matrix representation $\pi : G \to \mathrm{GL}_{d_\pi}(\mathbb{C})$, $[\pi] \in \mathrm{Rep}(G)$, is a continuous function, and $\mathrm{GL}_{d_\pi}(\mathbb{C})$ does not contain small subgroups. Thus, we can find a neighborhood $V$ of $\mathrm{I}_{d_\pi}$ such that the only subgroup of $\mathrm{GL}_{d_\pi}(\mathbb{C})$ contained in $V$ is $\{\mathrm{I}_{d_\pi}\}$. Let $U := \pi^{-1}(V)$. Clearly, $U$ is a neighborhood of $e \in G$. Hence some $G_n$ is contained in $U$ and $\pi(U) \subset \{\mathrm{I}_{d_\pi}\}$. This shows that the kernel of every $\pi$ with $[\pi] \in \mathrm{Rep}(G)$ is a compact open subgroup of $G$ that must contain one of the $G_n$. 

\begin{defi}\label{defiultrametricdual}\normalfont
Let $G$ be a compact Vilenkin group, and let $[\pi] \in \mathrm{Rep}(G)$ be a continuous unitary representation. We define the natural number $n_{[\pi]}$ to be:
\[
n_{[\pi]} := \min \{ n \in \mathbb{N}_0 : \pi|_{G_n} = \mathrm{I}_{d_\pi} \}.
\]
Now let us consider the monoid $(\mathrm{Rep}(G), \otimes)$, with the one-dimensional identity representation as the identity element, which is abelian because $$[ \pi \otimes \pi' ] \cong [\pi] \otimes [\pi'] \cong [\pi'] \otimes [\pi] \cong [ \pi' \otimes \pi ] .$$With this notation, we define the \emph{dual ultrametric associated to} $\mathcal{G} =\{G_n\}_{n \in \mathbb{N}_0}$ on $\mathrm{Rep}(G)$, which we denote by $| \cdot |_{\widehat{G}}$, as
\[
|[\pi]|_{\widehat{G}} :=
\begin{cases}
0, & [\pi] \, \text{ is the identity representation} \,   ( \text{case } \, n_{[\pi]}=0),\\
|G_{n_{[\pi]}}^\bot \cap \widehat{G}| = |G_{n_{[\pi]}}|^{-1}, & \text{if} \, n_{[\pi]} > 0.
\end{cases}
\]
In particular, if $G$ is some kind of topologial manifold with dimension $d$, like for a compact $p$-adic Lie group $\G$, then we write $$ \| [\pi] \|_p:=|[\pi]|_{\widehat{\G}}^{1/d}.$$
\end{defi}

As we mentioned before, the above definition endows $(\mathrm{Rep}(G), \otimes)$ with a certain ``ultrametric'', meaning that $$| [\pi] \otimes [\pi'] |_{\widehat{\G}} \leq  \max \{| [\pi] |_{\widehat{\G}}, \, | [\pi'] |_{\widehat{\G}}\}.$$ 
The balls in this ultrametric are precisely the anihilators $$B_{ \mathrm{Rep}(G)} (n):= \{[\pi ] \in \mathrm{Rep}(G) \, : \, | [\pi]  |_{\widehat{\G}} \leq |G/G_n| \}= G_n^\bot,$$and the spheres would be $$S_{ \mathrm{Rep}(G)} (n):= \{[\pi ] \in \mathrm{Rep}(G) \, : \, | [\pi] |_{\widehat{\G}} = |G/G_n| \}= \mathrm{Rep}_n (G).$$Similarly, for the unitary dual we have$$B_{ \widehat{G}} (n):= \{[\pi ] \in \widehat{G} \, : \, | [\pi] |_{\widehat{\G}} \leq |G/G_n| \}= G_n^\bot \cap \widehat{G},$$and the spheres would be $$S_{ \widehat{G}} (n):= \{[\pi ] \in \widehat{G} \, : \, | [\pi]  |_{\widehat{\G}} = |G/G_n| \}= \mathrm{Rep}_n (G) \cap \widehat{G} = \widehat{G}(n).$$
We have thus proven the decomposition of $\mathrm{Rep}(G)$ and $\widehat{G}$ as disjoint union of spheres:
\[
\mathrm{Rep}(G) = \bigcup_{n \in \mathbb{N}_0} \mathrm{Rep}_n(G), 
\qquad 
\widehat{G} = \bigcup_{n \in \mathbb{N}_0} \widehat{G}(n),
\]
and in this way any $f \in L^2(G)$ can be written as
\[
f(x) = \sum_{n \in \mathbb{N}_0} \sum_{[\pi] \in \widehat{G}(n)} d_\pi \,\mathrm{Tr} \big[ \pi(x) \widehat{f}(\pi) \big].
\]

In particular, for any compact $p$-adic Lie group $\G$, the sets $\G_n^\perp \cap \widehat{\mathbb{G}}$ and $\widehat{\mathbb{G}}(n)$ have nice descriptions in terms of the ultrametric structure of $\widehat{\G}$ as the corresponding balls and spheres:
\[
\G_n^\perp \cap \widehat{\mathbb{G}} = \{ [\pi] \in \widehat{\mathbb{G}} : \|\pi\|_p \le p^n \} =: B_{\widehat{\mathbb{G}}}(n), 
\quad 
\widehat{\mathbb{G}}(n) = \{ [\pi] \in \widehat{\mathbb{G}} : \|\pi\|_p = p^n \} =: S_{\widehat{\mathbb{G}}}(n).
\]

\begin{rem}\label{remultrametricisnotdistance}
Perhaps the term ``ultrametric'' should be replaced by ``ultrametric semi-norm''. The reason is that the function $\rho : \mathrm{Rep}(G) \to \R_0^+$, $\rho(\pi):= |[\pi]|_{\widehat{G}}$, rather than being a distance function satisfies the following conditions: 
\begin{itemize}
    \item $\rho(1)=0$,
    \item $\rho([ \pi] \otimes [\pi' ]) \leq \max \{  \rho ([ \pi]) , \, \rho ([ \pi'])\},$
\end{itemize}
which are the conditions that define a semi-norm (with strong triangle inequality) on a monoid. To properly make $\rho$ a distance function, and therefore an ultrametric, we would have to restrict ourselves to the quotient $\mathrm{Rep}(G)/ \sim$ where $$[\pi] \sim [\xi] \, \text{if and only if} \,  \, \pi \otimes \xi^* \in \mathrm{Rep}^{ab}(G),$$where $\mathrm{Rep}^{ab}(G)$ is the collection of elements of $\mathrm{Rep}(G)$ which are trivial on $[G,G]$. 
\end{rem}

\subsection{Comparison with the real case}

It is known that
\[
|G/G_n| = \sum_{[\xi] \in \widehat{G/G_n}} d_\xi^2 = \sum_{k \le n} \sum_{[\xi] \in \widehat{G}(k)} d_\xi^2,
\]
and also
\[
\sum_{[\xi] \in \widehat{G}(n)} d_\xi^2 = |G/G_n| - |G/G_{n-1}| = |G/G_n|\Big(1 - \frac{1}{|G_{n-1}/G_n|}\Big),
\]
so that we always have
\[
\sum_{[\xi] \in \widehat{G}(n)} d_\xi^2 \leq |G/G_n|,
\]
and 
\[
\sum_{[\xi] \in \widehat{G}(n)} d_\xi^2 \asymp |G/G_n|.
\]
We can think of this property as an analogue of the following estimation on real Lie groups. Let $G$ be a compact Lie group of dimension $d= \mathrm{dim}_\R (G)$, and define
\[
\langle \xi \rangle := (1+\lambda_\xi)^{1/2}, \quad [\xi] \in \widehat{G}.
\]It is proven in \cite{AkylzhanovNets} that 
\begin{equation}
\sum_{\substack{[\xi] \in \widehat{G} \\ \langle \xi \rangle \leq \langle \pi \rangle}} d_\xi^2
\langle \xi \rangle^{\alpha d}   \asymp \langle \pi \rangle^{(\alpha+1)d}, \quad \text{for } \alpha > -1,
\tag{4.4}
\end{equation}
and
\begin{equation}
\sum_{\substack{[\xi] \in \widehat{G} \\ \langle \xi \rangle \geq \langle \pi \rangle}}d_\xi^2
\langle \xi \rangle^{\alpha d}    \asymp \langle \pi \rangle^{(\alpha+1)d}, \quad \text{for } \alpha < -1.
\tag{4.5}
\end{equation}

For a compact $p$-adic Lie group $\G$ we would write instead \begin{align*}
    \sum_{\substack{[\xi] \in \widehat{\G} \\ \| \xi\|_p \leq \| \pi \|_p}} d_\xi^2 \| \xi \|_p^{\alpha d} \asymp \| \pi \|_p^{(\alpha + 1 )d}, \quad \text{for } \alpha > -1,
\end{align*}
and
\begin{align*}
    \sum_{\substack{[\xi] \in \widehat{\G} \\ \| \xi \|_p \geq \| \pi \|_p}}d_\xi^2
\| \xi \|_p^{\alpha d}    \asymp \| \pi \|_p^{(\alpha+1)d}, \quad \text{for } \alpha < -1.
\end{align*}

\subsection{Smooth functions and Sobolev spaces}

Our purpose here is to study Fourier multipliers. Specifically, we want to find sufficient conditions to extend a Fourier multiplier $T_\sigma$ from some dense subset of $L^r_\alpha(G)$, $\alpha > 0$, to a bounded operator on $L^r_\alpha(G)$. Usually, our starting dense subset will be the space of trigonometric polynomials on $G$, here denoted by $\mathcal{P}(G)$, which coincides with the space $\mathcal{D}(G)$ of smooth functions on $G$:
\[
\mathcal{D}(G) := \{ f : G \to \mathbb{C} \mid \text{there exists } l \in \mathbb{N}_0 \text{ such that } f(x \smallstar y) = f(x) \text{ for all } y \in G_l \}.
\]

\begin{pro}
The spaces of smooth functions on $G$ equal the space of trigonometric polynomials on $G$, i.e.,
\[
\mathcal{P}(G) = \mathcal{D}(G).
\]
\end{pro}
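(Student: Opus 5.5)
We prove the two inclusions $\mathcal{P}(G) \subseteq \mathcal{D}(G)$ and $\mathcal{D}(G) \subseteq \mathcal{P}(G)$ separately. Here $\mathcal{P}(G)$ is the linear span of the matrix coefficients $x \mapsto \pi_{ij}(x)$, $[\pi] \in \widehat{G}$; equivalently, it consists of the functions whose Fourier transform vanishes outside a finite subset of $\widehat{G}$.

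For $\mathcal{P}(G) \subseteq \mathcal{D}(G)$ we use the discussion preceding Definition \ref{defiultrametricdual}. Every $[\pi] \in \mathrm{Rep}(G)$ is trivial on some $G_{n}$, namely $G_{n_{[\pi]}}$, because $\mathrm{GL}_{d_\pi}(\mathbb{C})$ has no small subgroups. Hence for $y \in G_{n_{[\pi]}}$ we get $\pi(x \smallstar y) = \pi(x)\pi(y) = \pi(x)$, so every matrix coefficient of $\pi$ is right $G_{n_{[\pi]}}$-invariant. A trigonometric polynomial involves finitely many classes $[\pi_1], \dots, [\pi_m]$. Take $l := \max_j n_{[\pi_j]}$. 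Since the $G_n$ are decreasing, $G_l \subseteq G_{n_{[\pi_j]}}$ for every $j$, so $f(x \smallstar y) = f(x)$ for all $y \in G_l$, and therefore $f \in \mathcal{D}(G)$.

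For $\mathcal{D}(G) \subseteq \mathcal{P}(G)$, let $f$ be right $G_l$-invariant. Then $f = f \ast \epsilon_{G_l}$, since
\[
f \ast \epsilon_{G_l}(x) = |G_l|^{-1}\int_{G_l} f(x \smallstar y^{-1})\,dy = f(x).
\]
Taking Fourier transforms gives $\widehat{f}(\xi) = \widehat{\epsilon}_{G_l}(\xi)\widehat{f}(\xi)$ with the convention $\widehat{f\ast g} = \widehat{g}\,\widehat{f}$; the order of the factors is irrelevant here because $\widehat{\epsilon}_{G_l}(\xi) = \delta_{G_l^\bot}(\xi)\,\mathrm{I}_{d_\xi}$ is scalar. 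We verify this formula directly: $\int_{G_l}\xi^*(y)\,dy$ is the projection onto the $\xi(G_l)$-fixed vectors. This subspace is $G$-invariant because $G_l$ is normal in $G$, at least for the congruence subgroups $\G_n$ used here; in general one can pass to the open normal core $\bigcap_{g} gG_lg^{-1}$, which has finite index. By irreducibility, this projection is either $\mathrm{I}_{d_\xi}$ or $0$. Consequently $\widehat{f}$ is supported on $G_l^\bot \cap \widehat{G}$.

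It remains to show that $G_l^\bot \cap \widehat{G}$ is finite. Its elements are irreducible representations of the finite group $G/G_l$, so $\sum_{[\xi] \in G_l^\bot \cap \widehat{G}} d_\xi^2 = |G/G_l| < \infty$. The Fourier series of $f$ therefore reduces to a finite sum, which equals $f$ in $L^2$, and equals $f$ pointwise because both sides are locally constant. Hence $f \in \mathcal{P}(G)$.

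The only delicate point is the normality of $G_l$ needed to identify $\widehat{\epsilon}_{G_l}$ with a scalar. This can be avoided altogether: $f$ descends to a function on the finite set $G/G_l$, which is spanned by matrix coefficients of the representation of $G$ on $L^2(G/G_l)$ induced from the trivial character of $G_l$. That representation is finite-dimensional and decomposes into finitely many irreducibles, which again gives $f \in \mathcal{P}(G)$.
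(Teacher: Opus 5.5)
Your proof is correct and follows the same route as the paper's. For one inclusion, matrix coefficients are locally constant; for the other, $\widehat{f}$ is supported on the finite set $G_l^\bot\cap\widehat{G}$, so Fourier inversion yields a trigonometric polynomial. You also supply details the paper leaves implicit: the identity $f=f\ast\epsilon_{G_l}$, finiteness via $\sum d_\xi^2=|G/G_l|$, and the normality of $G_l$ needed for $\widehat{\epsilon}_{G_l}$ to be scalar, together with a valid workaround through the open normal core.
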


\begin{proof}
In one direction, the matrix entries of the representation of a profinite group are locally constant functions, so $\mathcal{P}(G) \subset \mathcal{D}(G)$. On the other side, the Fourier transform $\widehat{f}(\xi)$ of a locally constant function $f \in \mathcal{D}(G)$ is zero for $[\xi] \in \widehat{G}_n$ with $n$ large enough. This fact and the Fourier inversion formula show that $\mathcal{D}(G) \subset \mathcal{P}(G)$. In conclusion, $\mathcal{P}(G) = \mathcal{D}(G)$.
\end{proof}

Fourier multipliers, as defined before, are left-invariant operators on $G$. Thus, such operators can also be thought of as convolution operators given in terms of a certain right convolution kernel, which in terms of the symbol can be expressed as
\[
\mathfrak{r}_\sigma(x) = \sum_{[\pi] \in \widehat{G}} d_\pi \, \mathrm{Tr}\big[ \pi(x) \sigma(\pi) \big].
\]

Considering this, we have two approaches to study the boundedness of a certain invariant operator $T_\sigma$: either we focus on its associated symbol, or we work with its convolution kernel. We will exploit both approaches here, but we are particularly interested in finding explicit conditions on the symbols to guarantee the $L^r$-boundedness of the operator. In the spirit of classical Euclidean results, we show that some ``regularity'' on the symbol is enough to conclude its boundedness on $L^r(G)$, where the regularity of the symbol is measured via the difference operators described in Definition \ref{def:diff_op}. This idea was explored in \cite{SaloffCoste1986}, where Saloff-Coste defines a pseudo-differential calculus in terms of the Vladimirov–Taibleson operator and the weighted difference operators
\[
\Delta_\eta^\beta \sigma(\xi) := \frac{\sigma(\xi+\eta)-\sigma(\xi)}{|\eta|^\beta}.
\]

\begin{defi}
Given a sequence of compact open subgroups $ \mathcal{G}=\{ G_n \}_{n \in \N_0}$, we define the function $\langle \cdot \rangle_{\mathcal{G}} : \mathrm{Rep}(G) \to \mathbb{R}$ as
\[
\langle \pi \rangle_{\mathcal{G}} :=
\begin{cases}
1, & \text{if } \pi \text{ is the identity representation} \, \, (\text{case} \, n_{[\pi]} =0), \\
|G/G_n|, & \text{if } \pi \in \mathrm{Rep}_n(G),\ n \in \mathbb{N}_0, \, \, (\text{case} \, n_{[\pi]} >0).
\end{cases}
\]
For any pair of nontrivial representations, it holds that
\[
\langle \pi \otimes \xi \rangle_{\mathcal{G}} \le \max \{ \langle \xi \rangle_{\mathcal{G}}, \langle \pi \rangle_{\mathcal{G}} \}, \quad [\pi], [\xi] \in \mathrm{Rep}(G).
\]
\end{defi}

Since the Fourier transform diagonalizes the VT operator in Definition \ref{defivladimirov}, we can naturally define Sobolev spaces and characterize them via Fourier transform. These are a scale of Banach spaces measuring the ``smoothness'' of functions and distributions, compatible with the VT operator $D_\mathcal{G}^\alpha$, meaning these Sobolev spaces are the natural domains of $D_\mathcal{G}^\alpha$.

\begin{defi}\normalfont
Let $G$ be a compact Vilenkin group with a bounded filtration $\mathcal{G} = \{ G_n \}_{n \in \mathbb{N}_0}$.
\begin{enumerate}
\item For $1 < r < \infty$ and $\beta \in \mathbb{R}$, the inhomogeneous Sobolev spaces $H_r^\beta(G)$ are the metric completion of $\mathcal{D}(G)$ with the norm
\[
\| f \|_{H_r^\beta(G)} := \Bigg( \int_G \Big| \sum_{n \in \mathbb{N}_0} \sum_{[\xi] \in \widehat{G}(n)} d_\xi \, \mathrm{Tr}\big[ \xi(x) \langle \xi \rangle_\mathcal{G}^\beta \widehat{f}(\xi) \big] \Big|^r \, d\mu_G(x) \Bigg)^{1/r}.
\]
The homogeneous Sobolev space $\dot{H}_r^\beta(G)$ is the metric completion of $\mathcal{D}(G)$ with the norm
\[
\| f \|_{\dot{H}_r^\beta(G)} := \Bigg( \int_G \Big| \sum_{n \in \mathbb{N}} \sum_{[\xi] \in \widehat{G}(n)} d_\xi \, \mathrm{Tr}\big[ \xi(x) \langle \xi \rangle_\mathcal{G}^\beta \widehat{f}(\xi) \big] \Big|^r \, d\mu_G(x) \Bigg)^{1/r}.
\]

\item For $\beta \in \mathbb{C}$, define the Fourier multiplier $\mathbb{D}_\mathcal{G}^\beta$ by its symbol
\[
\sigma_{\mathbb{D}_\mathcal{G}^\beta}(\pi) = \langle \pi \rangle_\mathcal{G}^\beta \, \mathrm{I}_{d_\pi}, \quad [\pi] \in \mathrm{Rep}(G).
\]
Then, for $\beta \in \mathbb{R}$, $f \in H_r^\beta(G)$ if and only if $\mathbb{D}_\mathcal{G}^\beta f \in L^r(G)$.
\end{enumerate}
\end{defi}

\begin{rem}
In the case where $G = \mathbb{G}$ is a compact $p$-dic Lie group, the Sobolev norm can be written as
\[
\| f \|_{H_r^\beta(\mathbb{G})} := \Bigg( \int_{\mathbb{G}} \Big| \sum_{n \in \mathbb{N}_0} \sum_{[\xi] \in \widehat{\mathbb{H}}_d(n)} |\xi_3|_p^d \, \mathrm{Tr}\big[ \pi_\xi(x) \| \xi \|_p^\beta \widehat{f}(\xi) \big] \Big|^r \, dx \Bigg)^{1/r}.
\]
\end{rem}

\subsection{Tensor products of representations}
As the reader might have noticed, it will be essential for us in each step to work with some matrix realization of our representations, and most of the time we will be talking about a certain matrix representation $\pi$ and its corresponding equivalence class $[\pi] \in \mathrm{Rep}(\G)$ as the same thing. This can be done without loss of consistency because most of our analysis is independent on the choice of representative for the class $[\pi]$. With this convention, the tensor product $\pi \otimes \xi$ can be identified with the Kronecker product of the matrix forms of the representations. If $A$ is an $m \times n$ matrix and $B$ is a $p \times q$ matrix, the Kronecker product $A \otimes B$ is
\[
A \otimes B =
\begin{bmatrix}
a_{11} B & \cdots & a_{1n} B \\
\vdots & \ddots & \vdots \\
a_{m1} B & \cdots & a_{mn} B
\end{bmatrix}, \quad \xi \otimes \pi (\mathbf{x}) =
\begin{bmatrix}
\xi_{11} (\mathbf{x}) \pi (\mathbf{x}) & \cdots & \xi_{1d_\xi} (\mathbf{x}) \pi (\mathbf{x}) \\
\vdots & \ddots & \vdots \\
\xi_{d_\xi1} (\mathbf{x}) \pi (\mathbf{x}) & \cdots & \xi_{d_\xi d_\xi} (\mathbf{x}) \pi (\mathbf{x})
\end{bmatrix}.
\]

For compact nilpotent $p$-adic Lie groups we have some additional structure, due to the implications of the Kirillov orbit method. For instance on $\mathbb{H}_d$, as we mentioned before, the irreducible unitary representations of $\mathbb{G}$ are indexed by $\widehat{\mathbb{H}}_d$, which we think of as the sub-tree of $\widehat{\mathbb{Z}}_p^{2d+1}$ given by
\[
\widehat{\mathbb{H}}_d = \left\{ \xi \in \widehat{\mathbb{Z}}_p^{2d+1} : (\xi_1, \xi_2) \in \mathbb{Q}_p^{2d} / p^{\vartheta(\xi_3)} \mathbb{Z}_p^{2d} \right\},
\]
and these representations can be realized in matrix form through the coefficients
\[
(\pi_\xi(\mathbf{x}))_{hh'} = e^{2\pi i \{x \cdot \xi + \xi_3 h' \cdot u\}_p} \mathbb{1}_{h'-h + p^{-\vartheta(\xi_3)}\mathbb{Z}_p^d}(x_1).
\]

Thus, the character of the representation is given by
\[
\chi_{\pi_\xi}(\mathbf{x}) = \sum_{h \in \mathbb{Z}_p / p^{-\vartheta(\xi_3)} \mathbb{Z}_p} e^{2\pi i \{x \cdot \xi + \xi_3 h \cdot u\}_p} \mathbb{1}_{p^{-\vartheta(\xi_3)} \mathbb{Z}_p^d}(x_1)
= |\xi_3|_p^d e^{2\pi i \{x \cdot \xi\}_p} \mathbb{1}_{p^{-\vartheta(\xi_3)} \mathbb{Z}_p^d}(x_1)\mathbb{1}_{p^{-\vartheta(\xi_3)} \mathbb{Z}_p^d}(x_2),
\]
which also allows us to obtain the characters of the tensor products:
\[
\chi_{\pi_\eta \otimes \pi_\xi}(\mathbf{x}) = \chi_{\pi_\eta}(\mathbf{x})\chi_{\pi_\xi}(\mathbf{x})
= |\eta_3|_p^d |\xi_3|_p^d e^{2\pi i \{x \cdot (\xi + \eta)\}_p} \mathbb{1}_{p^{-\vartheta(\xi_3,\eta_3)} \mathbb{Z}_p^{2d}}(x_1, x_2).
\]
Several things we can be deduced from here:

\begin{enumerate}
    \item The unitary dual of the Heisenberg group can be parametrized by a certain subset on $\Q_p^{2d+1}$, or to be more precise, by a subset of a complete collection of representatives for $\widehat{\Z}_p^{2d+1} \cong \Q_p^{2d+1} / \Z_p^{2d+1}$. Such set would be, by the Kirillov orbit method, a complete collection of representatives of $\widehat{\Z}_p^{2d+1}/ \mathrm{Ad}^* (\mathbb{H}_d) \cong \widehat{\mathbb{H}}_d$.
    \item Consider the set $$\widehat{\mathbb{H}}_d^{\N, \, finite} := \{ f: \N \to \widehat{\mathbb{H}}_d \subseteq \widehat{\Z}_p^{2d+1} \, : \, f(n)=0 \,\, \text{for large enough} \, n  \},$$which is the set of finite sequences on $\widehat{\mathbb{H}}_d$. Define $S_\infty$ as the union of all symmetric groups $S_n$. Then $$\Gamma(\mathbb{H}_d):= \widehat{\mathbb{H}}_d^{\N, \, finite}/ S_\infty,$$that is the collection of all finite unordered sequences of elements of $\widehat{\mathbb{H}}_d$, can be identified with $\mathrm{Rep}(\mathbb{H}_d)$. A natural question would be, can we endow $\Gamma(\mathbb{H}_d)$ with some operation isomorphic to $\otimes$?

    \item Given $ \xi, \eta \in \widehat{\mathbb{H}}_d$, the tensor product $[\pi_\xi \otimes \pi_\eta]$ is \textbf{not equal} to $[\pi_{\xi + \eta}]$, but there is a certain relation between $\pi_\xi \otimes \pi_\eta$ and the representation associated to the equivalence class of $\xi + \eta$ in $\widehat{\Z}_p^{2d+1}/ \mathrm{Ad}^* (\mathbb{H}_d)$. Actually, since $(\mathrm{Rep}(\mathbb{H}_d), \otimes)$ is a commutative monoid, and every element of $\mathrm{Rep}(\mathbb{H}_d)$ is a finite direct sum of elements of $\widehat{\mathbb{H}}_d$, we can hope to find a suitably defined  operation $\boxtimes$ to make $(\mathrm{Rep}(\mathbb{H}_d), \otimes)$ and $(\Gamma(\mathbb{H}_d) , \boxtimes),$ isomorphic monoids. Let us agree first that \begin{align*}
        (\xi_1,...,&\xi_n,0,0,...) \boxtimes (\eta_1,...,\eta_m,0,0,...) \\ &=(\xi_1 \boxtimes(\eta_1+...+\eta_m) , \xi_2  \boxtimes(\eta_1+...+\eta_m),..., \xi_n \boxtimes(\eta_1+...+\eta_m),0,0,... ).
    \end{align*}Thus, to define “$\boxtimes$” we only need to define its action on $\widehat{\mathbb{H}}_d$, so we need to understand how the irreducible components of $\pi_\xi \otimes \pi_\eta$ are indexed by elements of $\widehat{\mathbb{H}}_d$. Using a simple Fourier series argument, one can check that
    \[
    \chi_{\pi_\eta \otimes \pi_\xi}(\mathbf{x}) = \sum_{\gamma \in p^{-N(\eta, \xi)} \mathbb{Z}_p^{2d} / p^{\vartheta(\xi_3+\eta_3)} \mathbb{Z}_p^{2d}}
    |\xi_3 + \eta_3|_p^{2d} e^{2\pi i \{ \mathbf{x} \cdot (\xi + \eta + (\gamma_1, \gamma_2, 1)) \}_p} \mathbb{1}_{p^{-\vartheta(\xi_3+\eta_3)} \mathbb{Z}_p^{2d}} (x_1, x_2).
    \]
    \item From the above Fourier series representation we can deduce that  $$[\pi_\xi \otimes \pi_\eta ] \cong \bigoplus_{j=1}^{| \xi_3 + \eta_3 |_p^d} \bigoplus_{\gamma \in p^{-N(\eta, \xi)} \mathbb{Z}_p^{2d} / p^{\vartheta(\xi_3+\eta_3)} \mathbb{Z}_p^{2d}}  [\pi_{\xi +  \eta + (\gamma_1 , \gamma_2, 1)}],  $$and in particular: \[
[\pi_\xi \otimes \pi_\eta ] \cong 
\begin{cases}
\bigoplus_{j=1}^{| \eta_3 |_p^d} [\pi_{\xi + \eta } ] , & \text{if } \, |\xi_3 + \eta_3|_p = |\xi_3|_p \, \,, \\
\bigoplus_{j=1}^{| \xi_3 |_p^d} [\pi_{\xi + \eta } ] , & \text{if } \, |\xi_3 + \eta_3|_p = |\eta_3|_p .
\end{cases}
\]    
\end{enumerate}

These observations motivate the following definitions. Their purpose is to be able to make $\Gamma( \mathbb{H}_d)$ a commutative monoid with the operation “$\boxtimes$”.

\begin{defi}[Product of representations]
\,
\begin{enumerate}
    \item Let $\gamma, \zeta \in \widehat{\mathbb{Z}}_p^d$. We say that $\gamma$ is congruent to $\zeta$ modulo $p^n$, and we write $\gamma \equiv \zeta \mod p^n$, if
    \[
    \gamma - \zeta \in  p^{-n} \mathbb{Z}_p^d.
    \]
    In other words, $\gamma, \zeta \in \widehat{\mathbb{Z}}_p^d$ are congruent if they have the same digits in their $p$-adic expansion, except maybe for the first $n$ digits. This defines, for every fixed $n$, an equivalence relation in $\widehat{\mathbb{Z}}_p^d$, and we denote the class of $\gamma$ in $\Q_p^d / p^{-n}\Z_p^d$ by $[\gamma]_n$.
    \item For $\xi, \eta \in \widehat{\mathbb{Z}}_p^{2d+1}$, let $N(\eta,\xi) \in \mathbb{N}_0$ be the natural number such that
    \[
    p^{N(\eta,\xi)} = \max \{ |\eta_3|_p, |\xi_3|_p \}.
    \]
    We define the function
    \[
    \boxtimes : \widehat{\mathbb{H}}_d \times \widehat{\mathbb{H}}_d \to \Gamma( \widehat{\mathbb{H}}_d)
    \]
    through the formula $\xi \boxtimes \eta :=$
    \[
    \begin{cases} \big([ \xi_1 + \eta_1 ]_{N(\eta,\xi)}, [ \xi_2 + \eta_2 ]_{N(\eta,\xi)}, \xi_3 + \eta_3 \big), & \text{if } \, |\xi_3 + \eta_3|_p = \max \{ |\xi_3|_p, |\eta_3|_p \}  \, \,, \\
\bigoplus_{\gamma \in p^{-N(\eta, \xi)} \mathbb{Z}_p^{2d} / p^{\vartheta(\xi_3+\eta_3)} \mathbb{Z}_p^{2d}} \big([ \xi_1 + \eta_1 + \gamma_1 ]_{N(\eta,\xi)}, [ \xi_2 + \eta_2 + \gamma_2 ]_{N(\eta,\xi)}, \xi_3 + \eta_3 \big) , & \text{if } \, |\xi_3 + \eta_3|_p<\max \{ |\xi_3|_p, |\eta_3|_p \} .
\end{cases} 
    \]
\end{enumerate}
\end{defi}

The above definition might seem a little bit complicated, but it is meant to simplify the notation for the difference operators in our setting. With the above notation we actually can write    

\[
 \sigma( \xi \otimes \eta ) \cong 
\begin{cases}
|\eta_3|_p^d \sigma( \xi \boxtimes \eta) :=  \bigoplus_{j=1}^{| \eta_3 |_p^d}  \sigma([\pi_{\xi + \eta } ]) \cong \sigma(\xi \boxtimes \eta) \otimes  \mathrm{I}_{d_\eta} , & \text{if } \, |\xi_3 + \eta_3|_p = |\xi_3|_p \, \,, \\
|\xi_3|_p^d \sigma(\xi \boxtimes \eta ) := \bigoplus_{j=1}^{| \xi_3 |_p^d}  \sigma ([\pi_{\xi + \eta } ]) \cong  \mathrm{I}_{d_\xi} \otimes \sigma(\xi \boxtimes \eta )  , & \text{if } \, |\xi_3 + \eta_3|_p = |\eta_3|_p ,
\end{cases}
\]and for the case when $| \xi_3 + \eta_3|_p < \max \{|\xi_3|_p, | \eta |_p \} $    
$$\sigma(\xi \otimes \eta ) = |\xi_3 + \eta_3|_p^d \sigma(\xi \boxtimes \eta) := \bigoplus_{j=1}^{| \xi_3 + \eta_3 |_p^d} \bigoplus_{\gamma \in p^{-N(\eta, \xi)} \mathbb{Z}_p^{2d} / p^{\vartheta(\xi_3+\eta_3)} \mathbb{Z}_p^{2d}}  \sigma([\pi_{\xi +  \eta + (\gamma_1 , \gamma_2, 1)}]). $$

\begin{rem}
If the matrices are indexed by $h,h' \in \mathbb{Z}_p^d / p^{-\vartheta(\xi_3)}\mathbb{Z}_p^d$, the block $hh'$ of $\pi_\eta \otimes \pi_\xi$ is
\[
[\pi_\eta \otimes \pi_\xi(\mathbf{x})]_{hh'} := \pi_\eta(\mathbf{x})_{hh'} \pi_\xi(\mathbf{x}),
\]
which implies for the symbol of a linear operator $T$ the following: 
\[
[\pi_\eta \otimes \pi_\xi(\mathbf{x})\sigma_T(\mathbf{x},\eta \otimes \xi)]_{hh'} = \sum_{k \in \mathbb{Z}_p^d /  p^{- \vartheta(\eta_3)}\mathbb{Z}_p^d} [\pi_\eta \otimes \pi_\xi(\mathbf{x})]_{hk} [\sigma_T(\mathbf{x},\eta \otimes \xi)]_{kh'} = T(\pi_\eta \otimes \pi_\xi(\mathbf{x})).
\]
\end{rem}

\begin{rem}
The Kronecker product of matrices corresponds to the abstract tensor product of linear mappings. Specifically, if $V,W,X,Y$ have bases $\{v_i\}, \{w_j\}, \{x_k\}, \{y_\ell\}$, and $A,B$ represent linear transformations $S: V\to X$, $T: W\to Y$, then $A\otimes B$ represents $S\otimes T : V\otimes W \to X \otimes Y$ with respect to the natural bases.
\end{rem}

\subsection{Lower bounds for the representations} A crucial information we need here is a lower bound for $\| \pi (\mathbf{x}) - \mathrm{I}_{d_\pi}\|_{HS}$. The simplest form of this estimate occurs when the group is abelian, so the representations are simply characters. For the particular case of the group $\Q_p^d$ the following proposition holds.   

\begin{pro}\label{pronormAbelian}
    Let $\xi , \mathbf{x}\in \Q_p^d$ be $p$-adic numbers, and assume that $\xi \cdot \mathbf{x} \in \Q_p \setminus \Z_p.$ Then the following estimate holds:
    $$\| \xi \|_p^{-1} \| \mathbf{x}\|_p^{-1} \lesssim |e^{2 \pi i \{\xi \cdot \mathbf{x}\}_p} -1|.$$
    
    \end{pro}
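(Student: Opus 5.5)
The plan is to reduce the estimate to an elementary bound on roots of unity. Everything is controlled by the single $p$-adic number $y := \xi \cdot \mathbf{x} \in \Q_p$, and I would argue in two steps.

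\emph{Step 1 (the exponential is a primitive root of unity).} Since $y \in \Q_p \setminus \Z_p$, write $|y|_p = p^{k}$ with $k \geq 1$. By definition of the $p$-adic fractional part,
\[
\{y\}_p = \frac{a}{p^k}, \qquad a \in \Z, \quad 0 < a < p^k, \quad p \nmid a .
\]
Here $p \nmid a$ holds because the leading digit of $y$, at the position $p^{-k}$, is nonzero. In particular $a/p^k$ is not an integer, and its distance to the nearest integer is at least $1/p^k$.

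Using $|e^{2\pi i \theta} - 1| = 2|\sin(\pi\theta)|$ and the concavity bound $|\sin(\pi\theta)| \geq 2\,\mathrm{dist}(\theta,\Z)$, I get
\[
|e^{2 \pi i \{y\}_p} - 1| \;\geq\; 4\,\mathrm{dist}\big(a/p^k,\Z\big) \;\geq\; 4\,p^{-k} \;=\; 4\,|\xi\cdot\mathbf{x}|_p^{-1}.
\]

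\emph{Step 2 (compare $|\xi\cdot\mathbf{x}|_p$ with the norms).} With $\|\cdot\|_p$ the max-norm on $\Q_p^d$, the strong triangle inequality and multiplicativity of $|\cdot|_p$ give
\[
|\xi\cdot\mathbf{x}|_p \;\leq\; \max_{j} |\xi_j|_p\,|x_j|_p \;\leq\; \|\xi\|_p\,\|\mathbf{x}\|_p .
\]
Inverting this and combining with Step 1 yields
\[
|e^{2\pi i\{\xi\cdot\mathbf{x}\}_p} - 1| \;\geq\; 4\,\|\xi\|_p^{-1}\|\mathbf{x}\|_p^{-1},
\]
which is the claim, with implicit constant $4$ (or $1/4$, depending on which side one places it).

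\emph{Main obstacle.} Neither step is hard; the only point needing care is Step 1. There one must check that $\{y\}_p$ genuinely has exact denominator $p^k$, i.e.\ $p \nmid a$, and handle the wrap-around near $1$, where $a$ is close to $p^k$. The wrap-around is what forces the use of $\mathrm{dist}(\cdot,\Z)$ rather than $a/p^k$ itself. If a norm other than the max-norm is used, Step 2 changes only by a dimensional constant.
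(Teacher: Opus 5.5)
Your proof is correct and follows essentially the paper's argument. The paper also writes $\{\xi\cdot\mathbf{x}\}_p = k/p^l$ with $1\le k<p^l$ and finishes with $|e^{2\pi i\theta}-1| = 2\sin(\pi\theta)$ plus Jordan's inequality. Its choice $p^l = \|\xi\|_p\|\mathbf{x}\|_p$ tacitly uses your Step 2 inequality $|\xi\cdot\mathbf{x}|_p\le\|\xi\|_p\|\mathbf{x}\|_p$.

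Your version differs only in that it works with the exact valuation of $\xi\cdot\mathbf{x}$, which gives the slightly sharper intermediate bound $4|\xi\cdot\mathbf{x}|_p^{-1}$. It also makes explicit, via $\mathrm{dist}(\cdot,\Z)$, the wrap-around near $1$ that the paper leaves implicit. For that step you only need $0<a<p^k$, not $p\nmid a$.
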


\begin{proof}
Let $l$ be the natural number such that 
\[
p^l = \|\mathbf{x}\|_p \, \|\xi\|_p>1.
\]Then it is easy to check that $p^l \xi \cdot \mathbf{x} \in \Z_p$ so that $e^{2 \pi i\{p^l \xi \cdot \mathbf{x} \}_p} =1$ and  $e^{2 \pi i\{ \xi \cdot \mathbf{x} \}_p} =1$ must be a $p^l$-rooth of unity. We write $$\{ \xi \cdot \mathbf{x} \}_p = \frac{k}{p^l}, \quad \text{for some} \, \, 1 \leq k <p^l.$$ 
   Finally, just notice how \begin{align*}
       |e^{2 \pi i\{p^l \xi \cdot \mathbf{x} \}_p} -1|&= \sqrt{(\cos (2 \pi\{ \xi \cdot \mathbf{x} \}_p -1 )^2 + \sin^2 (2 \pi\{ \xi \cdot \mathbf{x} \}_p)} \\ &= \sqrt{2 (1- \cos(2 \pi\{ \xi \cdot \mathbf{x} \}_p))} \\ &= \sqrt{4 \sin^2 ( \pi\{ \xi \cdot \mathbf{x} \}_p)} =2  \sin( \pi\{ \xi \cdot \mathbf{x} \}_p),
   \end{align*}so as an application of Jordan's inequality we get $$|e^{2 \pi i\{p^l \xi \cdot \mathbf{x} \}_p} -1| =2  \sin( \pi\{ \xi \cdot \mathbf{x} \}_p) \gtrsim p^{-l}.$$
\end{proof}

Proposition \ref{pronormAbelian} may be regarded as the abelian counterpart 
of the lower bound we seek for $\|\pi_\xi(\mathbf{x}) - I_{d_\xi}\|_{HS}.$ However, at present the author is unable to provide a proof that holds 
for arbitrary compact Vilenkin groups. For this reason, it becomes necessary
to impose an additional hypothesis. This assumption is natural and not unduly restrictive, and it is known to hold for several examples  of compact nilpotent $p$-adic Lie groups. Our conjecture here is that the same holds for any compact nilpotent group, and maybe even foe general compact $p$-adic Lie groups. Nevertheless, given the wide structural diversity of 
compact Vilenkin groups, one cannot guarantee the required hypothesis is satisfied in 
full generality, and it becomes necessary to assume it as a given.

The additional condition we are talking about can be interpreted as a constraint on the 
multiplicity of the eigenvalue $1$ in a unitary irreducible representation; 
more precisely, it limits the number of eigenvalues equal to $1$ that such 
a representation may possess.

\begin{defi}\label{defiLowerBound}
    Let $G$ be a compact group. We say that $G$ \textbf{\emph{has the lower bound property}} if, given any $[\pi] \in \widehat{G}$, there is a constant $C>0$ such that $$\# \{ j \, : \, 1 \leq j \leq d_\pi, \,\, \lambda_j (\pi(x)) \neq 1 \} \geq C d_\pi, $$for any $x \in G$. Here $\lambda_1 (\pi(x)), ..., \lambda_{d_\pi} (\pi(x))$ are the eigenvalues of the matrix $\pi(x)$.
\end{defi}

To understand better the above hypothesis, we will check in the next sub-sections some particular examples where it is known to hold. But first, let us show the kind of estimate one can obtain from Definition \ref{defiLowerBound}, which will be crucial for all the equivalence of norms we are about to prove.

\begin{lema}\label{lemaLowerBoundVilenkin}
    Let $G$ be a compact vilenkin group, and assume the lower bound property for $G$. Then, for any $[\pi] \in \widehat{G}$, and any $x \in G$ such that $\pi(x) \neq \mathrm{I}_{d_\pi}$, it holds \[
\|\pi(x) - \mathrm{I}_{d_\pi} \|_{HS} \gtrsim d_\pi^{1/2} |\pi|_{\widehat{G}}^{-1} |x|_{\mathcal{G}}^{-1}.
\]In particular, if $\G$ is a compact $p$-adic Lie group we have: \[
\|\pi(\mathbf{x}) - \mathrm{I}_{d_\pi} \|_{HS} \gtrsim d_\pi^{1/2} \|\pi\|^{-d}_p \|x\|^{-d}_p.
\]
\end{lema}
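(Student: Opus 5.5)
Since $\pi(x)$ is unitary, it is diagonalizable in an orthonormal basis with eigenvalues $\lambda_j = e^{2\pi i \theta_j}$, $\theta_j \in [0,1)$. The Hilbert--Schmidt norm is unitarily invariant, so
\[
\|\pi(x)-\mathrm{I}_{d_\pi}\|_{HS}^2 = \sum_{j=1}^{d_\pi} |\lambda_j - 1|^2 .
\]
The plan is to bound each nontrivial term $|\lambda_j-1|$ from below as in Proposition~\ref{pronormAbelian}. We then count how many terms are nontrivial using the lower bound property of Definition~\ref{defiLowerBound}. That property gives at least $C d_\pi$ indices with $\lambda_j \neq 1$, so it suffices to show that every such eigenvalue satisfies
\[
|\lambda_j - 1| \gtrsim |\pi|_{\widehat G}^{-1}|x|_{\mathcal G}^{-1}.
\]
Summing then gives $\|\pi(x)-\mathrm{I}\|_{HS}^2 \gtrsim d_\pi |\pi|_{\widehat G}^{-2}|x|_{\mathcal G}^{-2}$, and taking square roots gives the claim.

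To bound the eigenvalues, we find the order of $\pi(x)$. Let $x \in G_m\setminus G_{m+1}$, so $|x|_{\mathcal G} = |G_m|$, and let $n = n_{[\pi]}$, so $|\pi|_{\widehat G} = |G_n|^{-1}$ and $\pi$ is trivial on $G_n$. Since $\pi(x)\neq \mathrm{I}$, we have $m<n$. The subgroup $G_n$ has finite index in $G$, and its normal core $N$ also lies in $\ker\pi$; for the nested subgroups at hand $G_n$ will be normal (as in the $p$-adic case with the congruence filtration). We then look at the image of $x$ in $G_m/G_n$, a group of order $|G_m|/|G_n| = |G_m/G_n|$. Hence $x^{|G_m/G_n|} \in G_n$, so
\[
\pi(x)^{|G_m/G_n|} = \mathrm{I}.
\]
Every eigenvalue $\lambda_j$ is therefore a root of unity of order dividing $M := |G_m|\,|G_n|^{-1}$. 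Writing $\lambda_j = e^{2\pi i k/M}$ with $1\le k<M$, the Jordan inequality computation in the proof of Proposition~\ref{pronormAbelian} gives
\[
|\lambda_j-1| = 2\sin(\pi k/M) \ge 4\min(k,M-k)/M \ge 4/M.
\]
Since $M^{-1} = |G_n|\,|G_m|^{-1} = |\pi|_{\widehat G}^{-1}|x|_{\mathcal G}^{-1}$, this is exactly the needed bound. If $G_n$ is not normal, the same argument works with the order of $xG_n$ in the finite group $G/\mathrm{core}(G_n)$, which divides $|G_m\cap\ldots|$; in the Vilenkin setting one can pass to a normal refinement at the cost of a bounded-order constant.

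For the $p$-adic specialization, recall $\|\pi\|_p = |\pi|_{\widehat{\G}}^{1/d}$ and $|\mathbf x|_{\mathcal G} \asymp \|\mathbf x\|_p^{d}$ by the normalization of the Haar measures of the $\G_n$ (as for $\mathbb H_d$). Substituting these gives the second inequality.

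The main obstacle is the order argument: justifying $x^{|G_m/G_n|}\in\ker\pi$, which needs normality of $G_n$ or a reduction to a normal subgroup. For compact $p$-adic Lie groups with the standard filtration $\G_n = \G\cap(I+\mathcal M_m(p^n\Z_p))$ this is immediate, since each $\G_n$ is normal. Even more directly, for $\mathbf x$ at level $m$ we have $\mathbf x^{p^{n-m}}\in \G_n$ for $p$ large, which gives the sharper order $p^{n-m}$.
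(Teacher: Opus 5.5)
Your proposal is correct and follows the paper's route: diagonalize the unitary matrix $\pi(x)$, note that each eigenvalue $\lambda_j\neq 1$ is a root of unity of order dividing $|G_m/G_n|$, bound $|\lambda_j-1|$ from below with Jordan's inequality, and count the nontrivial eigenvalues using the lower bound property. The normality issue you flag is harmless even without assuming $G_n$ is normal in $G_m$: $\ker\pi$ is normal and contains $G_n$, so $G_m\cap\ker\pi$ is normal in $G_m$ with index dividing $|G_m:G_n|$, and Lagrange's theorem gives $\pi(x)^{|G_m:G_n|}=\mathrm{I}_{d_\pi}$ directly (passing to the core of $G_n$, as you suggest, would only enlarge the order and weaken the bound).
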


\begin{proof}
Take $x \in G$ with $|x|_{\mathcal{G}} = |G/G_k|^{-1}$, and the $[\pi] \in \widehat{G}$ with $|[\pi]|_{\widehat{G}} = |G/G_{n}| \geq |G/G_{k+1}|$. Thus, since $\pi (x)$ is an unitary matrix, we can find a basis of the representation space $\mathcal{H}_\pi$ such that $\pi(x)$ is a diagonal matrix. Now let $\lambda_1 (\pi(x)), ..., \lambda_{d_\pi} (\pi(x))$ be the eigenvalues of $\pi (x)$. These are all roots of unity and, since $\pi$ descends to an unitary representation of $G_k / G_n$, we must have $$\pi(x)^{|G_k / G_n|} = I_{d_\pi} \, \, \implies \, \lambda_j (\pi(x))^{|G_k / G_n|} = 1.$$In other words, any $\lambda_j (\pi (x)) \neq 1$ is a $|G_k / G_n|$-root of unity, so we can write  $$\lambda_j (\pi (x)) = e^{2 \pi i \frac{l}{|G_k / G_n|}},   \, \, \text{for some} \, \, 1 \leq l \leq |G_k / G_n| -1. $$ 
This and Jordan's inequality on $[0, \pi)$ leads us to the estimate 

\begin{align*}
    |\lambda_j (\pi (x)) -1| &= \sqrt{( \cos \big(2 \pi  \frac{l}{|G_k / G_n|}\big) - 1)^2 + \big(\sin (2 \pi  \frac{l}{|G_k / G_n|} \big))^2} \\ &= \sqrt{2(1 -\cos \big(2 \pi  \frac{l}{|G_k / G_n|}\big))} \\& = \sqrt{4 \sin^2 \big( \pi  \frac{l}{|G_k / G_n|}\big)} = 2|\sin \big(\pi  \frac{l}{|G_k / G_n|}\big)| \\ & \gtrsim \frac{4}{\pi} \min \big\{\pi  \frac{l}{|G_k / G_n|} , \pi-    \pi \frac{l}{|G_k / G_n|}\big)\big\} \\ & = 4  \min \big\{  \frac{l}{|G_k / G_n|} ,    \frac{|G_k / G_n| - l}{|G_k / G_n|} \big\} \geq \frac{1}{|G_k / G_n|}. 
\end{align*}
Using this and the lower bound property from Definition \ref{defiLowerBound}, we get \begin{align*}
    \| \pi (x) - \mathrm{I}_{d_\pi} \|_{HS}^2 & = \sum_{1 \leq j \leq d_\pi} |\lambda_j (\pi (x)) -1|^2 = \sum_{\{ j \, : \, 1 \leq j \leq d_\pi, \,\, \lambda_j (\pi(x)) \neq 1 \}} |\lambda_j (\pi (x)) -1|^2 \\ & \gtrsim \sum_{\{ j \, : \, 1 \leq j \leq d_\pi, \,\, \lambda_j (\pi(x)) \neq 1 \}} \frac{1}{|G_k / G_n|^2} \\ & = \#\{ j \, : \, 1 \leq j \leq d_\pi, \,\, \lambda_j (\pi(x)) \neq 1 \} \cdot \frac{1}{|G_k / G_n|^2} \gtrsim d_\pi |G_k / G_n|^{-2}. 
\end{align*} 
To conclude our argument, just notice how $$|G_k / G_n|^{-1} = \frac{|G/G_k|}{|G/G_n|} = |\pi|_{\widehat{G}}^{-1} |x|_{\mathcal{G}}^{-1}.$$
This concludes the proof.
\end{proof}

\subsection{Example 1: $\mathbb{H}_d$} Start by recalling the realization for the matrix representations of $[\pi_\xi]$:

\[
\pi_\xi(\mathbf{x})_{hh'} = e^{2\pi i \{ \xi_1 \cdot x_1 + \xi_2 \cdot x_2 + \xi_3(x_3 + x_2 \cdot h') \}_p} \mathbb{1}_{h'-h + p^{-\vartheta(\xi_3)} \mathbb{Z}_p^d}(x_1).
\]
It is important to observe how, given any $\mathbf{x} \in \mathbb{H}_d $ and any column of $\pi_\xi$, there is only one element of the column that can be non-zero, thus we have two possibilities for the entries of $\pi_\xi - \mathrm{I}_{d_\xi}$. Either $\pi_\xi - \mathrm{I}_{d_\xi}$ is a diagonal matrix when $x_1 \in p^{-\vartheta(\xi_3)}\Z_p^d$, or only one of the $\pi_xi(\mathbf{x})_{hh'}$ is non-zero and the diagonal is simply constant $-1$, when $x_1 \in \Z_p^d \setminus p^{-\vartheta(\xi_3)}\Z_p^d$. We can compute
\[
\|\pi_\xi(\mathbf{x}) - \mathrm{I}_{d_\xi}\|^2_{HS} =
\begin{cases}
2|\xi_3|_p^d, & \text{if } x_1 \in \mathbb{Z}^d_p \setminus p^{-\vartheta(\xi_3)}\mathbb{Z}^d_p, \\[6pt]
\displaystyle \sum_{h' \in \mathbb{Z}^d_p / p^{-\vartheta(\xi_3)} \mathbb{Z}^d_p}
\Big| e^{2\pi i \{\xi \cdot \mathbf{x} + \xi_3 h' \cdot x_2\}_p} - 1 \Big|^2, & 
\text{if } x_1 \in p^{-\vartheta(\xi_3)}\mathbb{Z}^d_p.
\end{cases}
\]

Here the roots of the unity $e^{2\pi i \{\xi \cdot \mathbf{x} + \xi_3 h' \cdot x_2\}_p}$ cannot be all one, and actually \textbf{at most one of the eigenvalues of $\pi_\xi (\mathbf{x})$ can be equal to 1}. So in conclusion we have the estimate

$$\# \{ h \in \mathbb{Z}^d_p / p^{-\vartheta(\xi_3)} \mathbb{Z}^d_p \, :  \,\, \lambda_h (\pi_\xi (\mathbf{x})) \neq 1 \}= d_\xi -1 \geq \big( 1 -\frac{1}{p^{2d+1}} \big)  d_\xi, $$
leading us to the conclusion

\begin{align*}
\sum_{ h' \in \mathbb{Z}^d_p / p^{-\vartheta(\xi_3)} \mathbb{Z}^d_p } 
\left| e^{2\pi i \{\xi \cdot \mathbf{x} + \xi_3 h' \cdot x_2\}_p} - 1 \right|^2 
&\geq \sum_{h' \in \mathbb{Z}^d_p / |\xi_3|_p \mathbb{Z}^d_p}
4 \|\xi\|_p^{-2(2d+1)} \|\mathbf{x}\|_p^{-2(2d+1)} \\
&\geq  4 \|\xi\|_p^{-2(2d+1)} \|\mathbf{x}\|_p^{-2(2d+1)} (|\xi_3|_p^d - 1) \\
&\geq \|\xi\|_p^{-2(2d+1)} \|\mathbf{x}\|_p^{-2(2d+1)} |\xi_3|_p^d .
\end{align*}

\subsection{Example 2: The Engel group $\mathcal{B}_4$}Let $p>3$ be a prime number. Let us denote by $\mathfrak{b}_4$ the $\mathbb{Z}_p$--Lie
algebra generated by $X_1,\dots,X_4$ with the commutation relations
\[
[X_1,X_2]=X_3, \qquad [X_1,X_3]=X_4,
\]
and decomposition
\[
\mathfrak{b}_4=\bigoplus_{j=1}^{3}\mathcal{V}_j,
\]
where
\[
\mathcal{V}_1=\mathrm{span}_{\mathbb{Z}_p}\{X_1,X_2\},\qquad
\mathcal{V}_2=\mathrm{span}_{\mathbb{Z}_p}\{X_3\},\qquad
\mathcal{V}_3=\mathrm{span}_{\mathbb{Z}_p}\{X_4\}.
\]

We call the $\mathbb{Z}_p$--Lie algebra $\mathfrak{b}_4$ the \emph{4--dimensional Engel algebra},
and its exponential image, which we denote by $\mathcal{B}_4$, is called the
\emph{Engel group over the $p$--adic integers}.

Let us consider the realization of $\mathfrak{b}_4$ as the matrix algebra
\[
\mathfrak{b}_4(\mathbb{Z}_p)
=
\left\{
\begin{pmatrix}
0 & x_1 & 0 & x_4-\frac12 x_1\!\left(x_3-\frac12 x_1x_2\right)-\frac16 x_1^2x_2 \\
0 & 0 & x_1 & x_3-\frac12 x_1x_2 \\
0 & 0 & 0 & x_2 \\
0 & 0 & 0 & 0
\end{pmatrix}
\in \mathcal{M}_4(\mathbb{Z}_p)
\;:\;
x\in\mathbb{Z}_p^{4}
\right\}.
\]

With this realization, and using the usual matrix exponential map, we can think
of $\mathcal{B}_4(\mathbb{Z}_p)$ as the matrix group
\[
\mathcal{B}_4(\mathbb{Z}_p)
=
\left\{
\begin{pmatrix}
1 & x_1 & \tfrac12 x_1^2 & x_4 \\
0 & 1 & x_1 & x_3 \\
0 & 0 & 1 & x_2 \\
0 & 0 & 0 & 1
\end{pmatrix}
\in \mathcal{M}_4(\mathbb{Z}_p)
\;:\;
x\in\mathbb{Z}_p^{4}
\right\},
\]
which is analytically isomorphic to $\mathbb{Z}_p^4$ with the operation
\[
\mathbf{x}\star\mathbf{y}
=
\bigl(
x_1+y_1,\;
x_2+y_2,\;
x_3+y_3+x_1y_2,\;
x_4+y_4+x_1y_3+\tfrac12 x_1^2y_2
\bigr).
\]

The exponential map transforms sub--ideals of the Lie algebra $\mathfrak{b}_4$
to subgroups of $\mathcal{B}_4\simeq(\mathfrak{b}_4,\star)$, which can be endowed
with the sequence of subgroups
\[
J_n := \bigl(\mathfrak{b}_4(p^n\mathbb{Z}_p),\star\bigr),
\]
where
\[
\mathfrak{b}_4(p^n\mathbb{Z}_p)
=
p^n\mathbb{Z}_pX_1
+
p^n\mathbb{Z}_pX_2
+
p^n\mathbb{Z}_pX_3
+
p^n\mathbb{Z}_pX_4 .
\]

Thus $\mathcal{B}_4$ is a compact Vilenkin group together with the sequence of
compact open subgroups
\[
G_n := \mathcal{B}_4(p^n\mathbb{Z}_p)
= \exp\!\bigl(\mathfrak{b}_4(p^n\mathbb{Z}_p)\bigr),
\qquad n\in\mathbb{N}_0 .
\]

Notice that the sequence $\mathcal{G}=\{G_n\}_{n\in\mathbb{N}_0}$ forms a basis
of neighbourhoods at the identity, so the group is metrizable. We can endow it
with the natural ultrametric
\[
\rho_{\mathcal{G}}(\mathbf{x},\mathbf{y})=
\begin{cases}
0, & \mathbf{x}=\mathbf{y},\\
|G_n|=p^{-4n}, & \mathbf{x}\star\mathbf{y}^{-1}\in G_n\setminus G_{n+1}.
\end{cases}
\]

Nevertheless, instead of this ultrametric we will use the $p$--adic norm
\[
\|\mathbf{x}\|_p
:=
\max_{1\le j\le4} |x_j |_p .
\]

Notice that
\[
\|\mathbf{x}\|_p^{4}=|\mathbf{x}|_{\mathcal{G}},
\qquad \text{for any } \mathbf{x}\in\mathcal{B}_4 .
\]

Let us denote by $\widehat{\mathcal{B}_4}$ the unitary dual of $\mathcal{B}_4$,
i.e., the collection of all unitary irreducible representations of $\mathcal{B}_4$.
Then we can identify $\widehat{\mathcal{B}_4}$ with the following subset of
$\widehat{\mathbb{Z}_p^4}\cong \mathbb{Q}_p^4/\mathbb{Z}_p^4$:
\[
\widehat{\mathcal{B}_4}
:=
\left\{
\xi \in \widehat{\mathbb{Z}_p^4}
\;:\;
1 \le |\xi_4|_p < |\xi_3|_p
\ \wedge\
(\xi_1,\xi_2,\xi_3)\in \widehat{\mathbb{H}_1},
\ \text{or}\ 
|\xi_3|_p = 1
\ \wedge\
\xi_1\in \mathbb{Q}_p/p^{\nu(\xi_4)}\mathbb{Z}_p
\right\}.
\]

Moreover, each non–trivial representation $[\pi_\xi]$ is equivalent to one
representation $[\pi_\xi]\in\widehat{\mathcal{B}_4}$ which can be realized in the
following finite dimensional subspace $\mathcal{H}_\xi$ of $L^2(\mathbb{Z}_p)$:
\[
\mathcal{H}_\xi
:=
\mathrm{span}_{\mathbb{C}}
\left\{
\|(\xi_3,\xi_4)\|_p^{1/2}
\,\mathbf{1}_{\,h+p^{-\vartheta(\xi_3,\xi_4)}\mathbb{Z}_p}
\;:\;
h\in \mathbb{Z}_p/p^{-\vartheta(\xi_3,\xi_4)}\mathbb{Z}_p
\right\}.
\]

The dimension of this space is
\[
d_\xi := \dim_{\mathbb{C}}(\mathcal{H}_\xi)
=
\max\{|\xi_3|_p,|\xi_4|_p\}.
\]

The representation acts on functions $\varphi\in\mathcal{H}_\xi$
according to the formula
\[
(\pi_\xi(\mathbf{x})\varphi)(u)
:=
e^{2\pi i
\left\{
\xi_1x_1
+\xi_2x_2
+\xi_3(x_3+ux_2)
+\xi_4\!\left(x_4+ux_3+\tfrac{u^2}{2}x_2\right)
\right\}_p}
\,
\varphi(u+x_1).
\]

With this explicit realization of $[\pi_\xi]\in\widehat{\mathcal{B}_4}$,
and with the natural choice of basis for each representation space,
the associated matrix coefficients $(\pi_\xi)_{hh'}$ are given by
\[
(\pi_\xi(\mathbf{x}))_{hh'}
=
e^{2\pi i
\left\{
\xi\cdot \mathbf{x}
+
(\xi_3x_2+\xi_4x_3)h'
+
\xi_4x_2\frac{(h')^2}{2}
\right\}_p}
\,
\mathbf{1}_{\,h'-h+p^{- \vartheta (\xi_3,\xi_4)}\mathbb{Z}_p}(x_1).
\]

In a similar situation as for the heisenberg group we obtain 
\[
\|\pi_\xi(\mathbf{x}) - \mathrm{I}_{d_\xi}\|^2_{HS} =
\begin{cases}
2\| (\xi_3 , \xi_4) \|_p, & \text{if } x_1 \in \mathbb{Z}_p \setminus p^{-\vartheta(\xi_3, \xi_4)}\mathbb{Z}_p, \\[6pt]
\displaystyle \sum_{h' \in \mathbb{Z}_p / p^{-\vartheta(\xi_3, \xi_4)} \mathbb{Z}_p}
\Big| e^{2\pi i
\left\{
\xi\cdot \mathbf{x}
+
(\xi_3x_2+\xi_4x_3)h'
+
\xi_4x_2\frac{(h')^2}{2}
\right\}_p} - 1 \Big|^2, & 
\text{if } x_1 \in p^{-\vartheta(\xi_3, \xi_4)}\mathbb{Z}_p.
\end{cases}
\]

Here the roots of the unity $e^{2\pi i
\left\{
\xi\cdot \mathbf{x}
+
(\xi_3x_2+\xi_4x_3)h'
+
\xi_4x_2\frac{(h')^2}{2}
\right\}_p}$ cannot be all one at the same time, and actually \textbf{at most one of the eigenvalues of $\pi_\xi (\mathbf{x})$ can be equal to 1}. So in conclusion we have the estimate

$$\# \{ h \in \mathbb{Z}_p / p^{-\vartheta(\xi_3, \xi_4)} \mathbb{Z}_p \, :  \,\, \lambda_h (\pi_\xi (\mathbf{x})) \neq 1 \}= d_\xi -1 \geq \big( 1 -\frac{1}{p^{4}} \big)  d_\xi, $$
leading us to the conclusion

\begin{align*}
\sum_{ h' \in \mathbb{Z}_p / p^{-\vartheta(\xi_3, \xi_4)} \mathbb{Z}_p } 
\left| e^{2\pi i
\left\{
\xi\cdot \mathbf{x}
+
(\xi_3x_2+\xi_4x_3)h'
+
\xi_4x_2\frac{(h')^2}{2}
\right\}_p} - 1 \right|^2 
&\geq \sum_{h' \in \mathbb{Z}_p / \|(\xi_3, \xi_4) \|_p \mathbb{Z}_p}
4 \|\xi\|_p^{-2(4)} \|\mathbf{x}\|_p^{-2(4)} \\
&\geq  4 \|\xi\|_p^{-2(4)} \|\mathbf{x}\|_p^{-2(4)} (\|(\xi_3, \xi_4) \|_p - 1) \\
&\geq \|\xi\|_p^{-2(4)} \|\mathbf{x}\|_p^{-2(4)} \|(\xi_3, \xi_4 ) \|_p .
\end{align*}

\begin{rem}
    We have included only two examples of compact nilpotent \(p\)-adic Lie groups exhibiting the lower bound property, solely for the sake of brevity. For additional examples, the reader is referred to \cite{VelasquezRodriguez2025}. We conjecture that \emph{every compact nilpotent \(p\)-adic Lie group satisfies this property}. While more general \(p\)-adic groups may fail to do so, though I have no idea about it, in the nilpotent case it is expected that the optimal constant \(C>0\) appearing in Definition~\ref{defiLowerBound} is given by
\[
C = 1 - \frac{1}{p^d} \, , \quad d = \mathrm{dim}_{\mathbb{Q}_p}(\mathbb{G}).
\]
\end{rem}
\subsection{Interpolation theorems} Finally, to conclude our preliminaries, we recall here some classical interpolation theorems we will be using along the paper. The first one is an interpolation argument between weighted $L^r$-spaces, the second is a weighted version of Marcinkiewicz interpolation theorem. See \cite{Stein1971, Stein1958} for more details.

\begin{teo}\label{teoSteinWeiss}
Let $(X,\lambda)$ and $(Y,\rho)$ be $\sigma$-finite measure spaces.

Let $w_0,w_1 : X \to (0,\infty)$ and $v_0,v_1 : Y \to (0,\infty)$ be measurable
weight functions.

For $j=0,1$, define the weighted measures
\[
d\mu_j = w_j\, d\lambda,
\qquad
d\nu_j = v_j\, d\rho.
\]

Let $1 \le r_0,r_1,s_0,s_1 \le \infty$.  Assume that $T$ is a linear operator such that
\[
T :
L^{r_0}(X,\mu_0)
\longrightarrow
L^{s_0}(Y,\nu_0), \quad 
T :
L^{r_1}(X,\mu_1)
\longrightarrow
L^{s_1}(Y,\nu_1),
\]
are bounded mappings, and that there exist constants $M_0,M_1>0$ satisfying
\[
\|Tf\|_{L^{s_j}(Y,\nu_j)}
\le
M_j
\|f\|_{L^{r_j}(X,\mu_j)},
\qquad j=0,1.
\]

Let $0<\theta<1$ and define
\[
\frac1{r_\theta}
=
\frac{1-\theta}{r_0}
+
\frac{\theta}{r_1},
\qquad
\frac1{s_\theta}
=
\frac{1-\theta}{s_0}
+
\frac{\theta}{s_1}.
\]

Define the interpolated weights
\[
w_\theta
=
w_0^{\,1-\theta}
w_1^{\,\theta},
\qquad
v_\theta
=
v_0^{\,1-\theta}
v_1^{\,\theta},
\]
and the corresponding measures
\[
d\mu_\theta = w_\theta\, d\lambda,
\qquad
d\nu_\theta = v_\theta\, d\rho.
\]

Then $T$ extends uniquely to a bounded operator $T :
L^{r_\theta}(X,\mu_\theta)
\rightarrow
L^{s_\theta}(Y,\nu_\theta),$ and satisfies the estimate
\[
\|Tf\|_{L^{s_\theta}(Y,\nu_\theta)}
\le
M_0^{\,1-\theta}
M_1^{\,\theta}
\,
\|f\|_{L^{r_\theta}(X,\mu_\theta)}.
\]
\end{teo}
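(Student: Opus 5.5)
The plan is to prove this by complex interpolation, following the Riesz--Thorin/Stein scheme. The point is to absorb the change of weights into an analytic family of test functions, so that the three lines lemma applies to a single bilinear form in the unweighted pairing $\int_Y (\cdot)(\cdot)\, d\rho$. We treat the case $r_\theta<\infty$ and $s_\theta>1$ in the main argument and handle the degenerate exponents at the end.

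\textbf{Reduction.} Fix a simple function $f$ on $X$ with $\|f\|_{L^{r_\theta}(\mu_\theta)}=1$. Fix a simple function $g$ on $Y$ normalized in the dual of $L^{s_\theta}(\nu_\theta)$ with respect to the pairing $\langle h,g\rangle=\int_Y h g\, d\rho$. That dual is $L^{s_\theta'}(Y, v_\theta^{-s_\theta'/s_\theta}d\rho)$. It then suffices to show
\[
\Big|\int_Y Tf\, g\, d\rho\Big|\le M_0^{1-\theta}M_1^\theta .
\]
Once this holds, density of simple functions in $L^{r_\theta}(\mu_\theta)$ gives the unique bounded extension.

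\textbf{Analytic family.} Put $\alpha(z)=\frac{1-z}{r_0}+\frac{z}{r_1}$ and $\beta(z)=\frac{1-z}{s_0'}+\frac{z}{s_1'}$. Define
\[
f_z=\big(|f|\,w_\theta^{1/r_\theta}\big)^{r_\theta\alpha(z)}\,\mathrm{sgn}(f)\; w_0^{-(1-z)/r_0}\,w_1^{-z/r_1},
\]
and define $g_z$ analogously:
\[
g_z=\big(|g|\,v_\theta^{-1/s_\theta}\big)^{s_\theta'\beta(z)}\,\mathrm{sgn}(g)\; v_0^{(1-z)/s_0}\,v_1^{z/s_1}.
\]
Both are chosen so that $f_\theta=f$ and $g_\theta=g$. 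A direct check of moduli then gives two identities.
\begin{itemize}
\item On the line $\mathrm{Re}\, z=0$: $|f_z|^{r_0}w_0=|f|^{r_\theta}w_\theta$ and $|g_z|^{s_0'}v_0^{-s_0'/s_0}=|g|^{s_\theta'}v_\theta^{-s_\theta'/s_\theta}$.
\item On the line $\mathrm{Re}\, z=1$: the same identities hold with $0$ replaced by $1$.
\end{itemize}
Consequently $\|f_{it}\|_{L^{r_0}(\mu_0)}=1$ and $\|g_{it}\|_{(L^{s_0}(\nu_0))'}=1$, and similarly on $\mathrm{Re}\, z=1$. Set
\[
F(z)=\int_Y T(f_z)\,g_z\,d\rho .
\]
Expanding $f$ and $g$ as finite sums of indicator functions and using linearity of $T$, $F$ is a finite sum of terms of the form $e^{az}e^{bz}\int_Y T(\mathbb 1_{A}\,\omega^{z})\,\mathbb 1_B\,\tilde\omega^{z}\, d\rho$. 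Here $\omega$ and $\tilde\omega$ are products of powers of the weights. The hypotheses and Hölder's inequality give
\[
|F(it)|\le M_0 \quad\text{and}\quad |F(1+it)|\le M_1 .
\]
The three lines lemma then yields $|F(\theta)|\le M_0^{1-\theta}M_1^\theta$, which is the claimed estimate.

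\textbf{Main obstacle.} The step I expect to be delicate is justifying that $F$ is analytic and bounded on the strip. The weights are arbitrary positive measurable functions, so $z\mapsto T(\mathbb 1_A w_0^{-(1-z)/r_0}w_1^{-z/r_1})$ need not make sense as an analytic vector-valued map: $T$ is only known on the endpoint spaces, and the functions $w^{z}$ may be unbounded. I would first restrict to $f$ and $g$ supported on the sets
\[
E_N=\{N^{-1}\le w_0,w_1\le N\}\quad\text{and}\quad F_N=\{N^{-1}\le v_0,v_1\le N\},
\]
intersected with sets of finite measure, where $\sigma$-finiteness is used. On these sets the weight factors are bounded and bounded away from zero. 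Next, I would approximate each power $w_j^{z}$ on $E_N$ uniformly by step functions of the form $\sum_k c_k(z)\mathbb 1_{A_k}$ with entire coefficients, so that $F$ becomes a locally uniform limit of exponential polynomials. This limit is bounded on the strip by a constant depending on $N$, which is enough for the three lines lemma.

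Finally, letting $N\to\infty$, monotone convergence recovers general simple $f$ and $g$. The endpoint cases $s_j=1$ or $r_j=\infty$ need separate treatment: interpret the conjugate exponents in the usual way, or simply freeze the corresponding factor, since then $\beta(z)$ or $\alpha(z)$ reduces to a single term.
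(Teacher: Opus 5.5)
The gap is the claim that your families satisfy $f_\theta=f$ and $g_\theta=g$. Since $r_\theta\alpha(\theta)=1$, your definition gives
\[
f_\theta=f\,w_\theta^{1/r_\theta}\,w_0^{-(1-\theta)/r_0}\,w_1^{-\theta/r_1},
\]
so $f_\theta=f$ holds only if $w_\theta^{1/r_\theta}=w_0^{(1-\theta)/r_0}w_1^{\theta/r_1}$. With the weight in the statement, $w_\theta=w_0^{1-\theta}w_1^{\theta}$, you actually get $f_\theta=f\,(w_0/w_1)^{\theta(1-\theta)(1/r_1-1/r_0)}$. Likewise $g_\theta=g\,(v_0/v_1)^{\theta(1-\theta)(1/s_0-1/s_1)}$. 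These are not $f$ and $g$ unless $r_0=r_1$ and $s_0=s_1$.

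Your identities on $\mathrm{Re}\,z=0,1$ are correct for any $w_\theta,v_\theta$. So the three lines lemma does give $|F(\theta)|\le M_0^{1-\theta}M_1^{\theta}$, but $F(\theta)=\int_Y T(f_\theta)\,g_\theta\,d\rho\neq\int_Y Tf\,g\,d\rho$. If you absorb the extra factors into $f$ and $g$, what your argument proves is
\[
T: L^{r_\theta}(X,\tilde w_\theta\,d\lambda)\to L^{s_\theta}(Y,\tilde v_\theta\,d\rho),\qquad
\tilde w_\theta=w_0^{(1-\theta)r_\theta/r_0}\,w_1^{\theta r_\theta/r_1},\quad
\tilde v_\theta=v_0^{(1-\theta)s_\theta/s_0}\,v_1^{\theta s_\theta/s_1},
\]
with norm at most $M_0^{1-\theta}M_1^{\theta}$. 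This is the genuine Stein--Weiss theorem, which the paper only cites (it gives no proof of its own). It coincides with the displayed statement only when $r_0=r_1$ and $s_0=s_1$.

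No argument can close this gap, because the statement as written is false when the exponents differ. Let $Hf(x)=x^{-1}\int_0^x f(t)\,dt$ on $(0,\infty)$ with Lebesgue measure; $H$ is bounded on $L^p(x^\gamma dx)$ precisely when $\gamma<p-1$. Take $r_0=s_0=4$, $w_0=v_0=x^2$, $r_1=s_1=4/3$, $w_1=v_1=1$ and $\theta=1/2$. Both hypotheses hold, $r_\theta=s_\theta=2$, and the statement asserts that $H$ is bounded on $L^2(x\,dx)$. It is not: for $f=\mathbb{1}_{(0,1)}$ one has $Hf(x)=x^{-1}$ for $x>1$, and $\int_1^\infty x^{-2}\,x\,dx=\infty$. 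The Stein--Weiss weight here is $\tilde w_\theta=x^{1/2}$, for which $H$ is bounded. More simply still, if $r_1=\infty$ then $L^\infty(X,w_1\,d\lambda)=L^\infty(X,\lambda)$ does not depend on $w_1$, while $w_0^{1-\theta}w_1^{\theta}$ does.

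So the theorem to prove is the corrected one with $\tilde w_\theta,\tilde v_\theta$. For that version, your analytic families, the truncation to $\{N^{-1}\le w_j\le N\}$ and the three lines argument form a sound proof, essentially the original one of Stein and Weiss. Note that the paper's Case~3 applies the displayed formula with $r_0\neq 2$, which is exactly where the two versions differ.
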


\begin{teo}[Weighted Marcinkiewicz Interpolation]\label{teoMarcinkiewicz}
Let $(X,\mu)$ be a measure space and let $w_0, w_1$ be nonnegative measurable weights on $X$. 
Let $T$ be a sublinear operator defined on simple functions such that:

\begin{enumerate}
    \item[\textnormal{(i)}] (\textbf{Weak type $(1,1)$}) There exists $C_0>0$ such that
    \[
    \|Tf\|_{L^{1,\infty}(w_0\, d\mu)}
    \le C_0 \|f\|_{L^{1}(w_0\, d\mu)}.
    \]
    
    \item[\textnormal{(ii)}] (\textbf{Strong type $(2,2)$}) There exists $C_1>0$ such that
    \[
    \|Tf\|_{L^{2}(w_1\, d\mu)}
    \le C_1 \|f\|_{L^{2}(w_1\, d\mu)}.
    \]
\end{enumerate}

Then for every $1<r<2$ and $\theta \in (0,1)$ satisfying
\[
\frac{1}{r} = \frac{1-\theta}{1} + \frac{\theta}{2},
\]
if we define the interpolated weight
\[
w_\theta = w_0^{\,1-\theta} w_1^{\,\theta},
\]
the operator $T$ extends to a bounded operator on $L^r(w_\theta\, d\mu)$ and there exists a constant $C_p>0$ such that
\[
\|Tf\|_{L^{r}(w_\theta\, d\mu)}
\le C_p \, C_0^{\,1-\theta} C_1^{\,\theta}
\|f\|_{L^{r}(w_\theta\, d\mu)}.
\]
\end{teo}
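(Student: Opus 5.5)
The plan is to deduce the statement from the real interpolation method applied to weighted Lebesgue couples with \emph{different} weights, rather than from a hands-on truncation at a fixed height. Classical Marcinkiewicz fixes one measure, whereas here $w_0\neq w_1$. The natural tool is the Peetre $K$-functional for the couple $\bigl(L^{1}(w_0\,d\mu),L^{2}(w_1\,d\mu)\bigr)$. The known description of its real interpolation spaces (in the spirit of Lizorkin and Freitag, extending Stein--Weiss to the real method) gives, for $\frac1r=\frac{1-\theta}{1}+\frac{\theta}{2}$,
\[
\bigl(L^{1}(w_0\,d\mu),L^{2}(w_1\,d\mu)\bigr)_{\theta,r}=L^{r}(w_\theta\,d\mu),\qquad w_\theta=w_0^{1-\theta}w_1^{\theta},
\]
with equivalent norms. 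I would either quote this identity or reprove it. The reproof computes $K(t,f)$ pointwise: near-optimal decompositions $f=f_0+f_1$ are obtained by cutting $|f(x)|$ at a level depending on $t$ and on the ratio $w_0(x)/w_1(x)$, namely $f_1=f\,\mathbb 1_{\{|f|\le t^{-2}\,(w_0/w_1)(x)\}}$ up to constants. Integrating $t^{-\theta}K(t,f)$ in $L^r(dt/t)$ and applying Fubini/Hardy then returns exactly $\int|f|^r w_0^{1-\theta}w_1^\theta\,d\mu$.

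Second, I would handle the weak endpoint. Since $T$ is sublinear, the $K$-functional argument only requires that $T$ map the couple on the domain side into a couple on the target side with controlled norms. The target is $\bigl(L^{1,\infty}(w_0\,d\mu),L^{2}(w_1\,d\mu)\bigr)$, so I would prove
\[
\bigl(L^{1,\infty}(w_0\,d\mu),L^{2}(w_1\,d\mu)\bigr)_{\theta,r}\hookrightarrow L^{r}(w_\theta\,d\mu).
\]
To do this, for $g=g_0+g_1$ with $g_0\in L^{1,\infty}(w_0)$ and $g_1\in L^2(w_1)$, I would estimate the $w_\theta$-distribution function of $g$ directly. On the set where the pointwise-optimal cut of the first step places mass in $g_0$, the weak bound $w_0\{|g_0|>s\}\le s^{-1}\|g_0\|$ replaces the strong $L^1$ bound. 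Because the exponent $r$ lies strictly between $1$ and $2$, the resulting geometric series in dyadic levels still converge, and the loss from using weak $L^1$ is absorbed into the constant $C_p$. Combined with the first step and the boundedness of $T$ on each endpoint, this yields
\[
\|Tf\|_{L^r(w_\theta)}\lesssim K\text{-norm of }Tf\lesssim C_0^{1-\theta}C_1^\theta\,\|f\|_{(L^1(w_0),L^2(w_1))_{\theta,r}}\asymp C_0^{1-\theta}C_1^\theta\,\|f\|_{L^r(w_\theta)}.
\]
The multiplicative dependence $C_0^{1-\theta}C_1^{\theta}$ comes from the usual rescaling $t\mapsto (C_1/C_0)t$ in the $K$-functional.

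The main obstacle I expect is this weak-endpoint embedding. In the single-weight case it is just the reiteration identity $L^{1,\infty}=L^{1,\infty}$ in the Lorentz scale. With two weights, however, $L^{1,\infty}(w_0)$ is not an intermediate space of a couple sharing the measure of $L^2(w_1)$, so reiteration is not directly available. The weak-type level sets are measured in $w_0$, while the conclusion needs them in $w_\theta$.

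My first attempt will be the change-of-measure trick. Writing $v=(w_1/w_0)$, the map $f\mapsto f$ from $L^2(w_1)$ to $L^2(v\,\cdot\,w_0)$ turns both endpoints into spaces over the single measure $w_0\,d\mu$, the second one carrying the weight $v$. I would then run the dyadic decomposition of $\{x: 2^k<v(x)\le 2^{k+1}\}$ and apply ordinary one-measure Marcinkiewicz on each piece, with constants uniform in $k$, summing the pieces at the end. If this uniformity fails, I would fall back on the special case actually used in the paper, where $w_j=\|\mathbf x\|_p^{\alpha_j}$ are powers of the same ultrametric. In that case the level sets $\{2^k<v\le2^{k+1}\}$ are the annuli $G_n\setminus G_{n+1}$, on which all the weights are comparable to constants, and the summation over $n$ is a convergent geometric series.
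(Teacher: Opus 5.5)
\textbf{Verdict: the proposal cannot be completed, and no route can succeed, because the statement is false as written.} (The paper gives no proof of this theorem; it only cites Stein and Stein--Weiss.)

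\textbf{Your first step fails.} The real interpolation space of the couple is
\[
\bigl(L^{1}(w_0\,d\mu),L^{2}(w_1\,d\mu)\bigr)_{\theta,r}=L^{r}\bigl(w_0^{\,r(1-\theta)}w_1^{\,r\theta/2}\,d\mu\bigr)=L^{r}\bigl(w_0^{\,2-r}w_1^{\,r-1}\,d\mu\bigr),
\]
not $L^r(w_0^{1-\theta}w_1^{\theta}\,d\mu)$. A scaling check already rules out your identity with weight-independent constants: replacing $w_0$ by $\lambda w_0$ multiplies the $(\theta,r)$-norm by $\lambda^{1-\theta}$, but $\|f\|_{L^r(w_\theta)}$ only by $\lambda^{(1-\theta)/r}$. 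Carried out correctly, your pointwise cut with $a=|f|w_0$ and $b=|f|^2w_1$ gives, for $\eta=r-1$,
\[
\int_0^\infty s^{-\eta}\min(a,sb)\,\frac{ds}{s}\asymp a^{1-\eta}b^{\eta}=|f|^{r}w_0^{2-r}w_1^{r-1}.
\]
This is the genuine Stein--Weiss exponent $w^{1/r}=w_0^{(1-\theta)/r_0}w_1^{\theta/r_1}$. Theorem~\ref{teoSteinWeiss} in the paper carries the same misprint.

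The two weights agree only when $w_0=w_1$, and otherwise the conclusion fails. Here is a counterexample.
\begin{itemize}
\item Let $X=\{a_k,b_k\}_{k\in\mathbb{N}}$ with counting measure.
\item Set $w_0(a_k)=w_1(a_k)=1$, $w_0(b_k)=M_k$, $w_1(b_k)=M_k^2$, with $M_k\to\infty$.
\item Define $Tf(b_k)=f(a_k)/M_k$ and $Tf(a_k)=0$.
\end{itemize}
Then $\|Tf\|_{L^1(w_0)}\le\|f\|_{L^1(w_0)}$ and $\|Tf\|_{L^2(w_1)}\le\|f\|_{L^2(w_1)}$, so (i) and (ii) hold with $C_0=C_1=1$. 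Testing on $f=\delta_{a_k}$, however, gives
\[
\|T\|_{L^r(w_\theta)\to L^r(w_\theta)}\ge M_k^{(r-1)(2-r)/r^2}\to\infty .
\]
Your fallback to powers of one ultrametric does not escape this. The same blocks can be realized by rescaled averaging operators from a ball $A_k$ to a disjoint ball $B_k$ of equal measure. On each ball the weights $|x|_{\mathcal{G}}^{\beta}$ and $|x|_{\mathcal{G}}^{2\beta}$ are constant, and $|x|_{\mathcal{G}}$ on $B_k$ is much larger than on $A_k$.

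\textbf{Your second step fails even with the corrected weight.} You were right that the weak endpoint is the main obstacle; in this generality the required embedding is false. The space $(L^{1,\infty}(w_0),L^2(w_1))_{\theta,r}$ contains $L^{1,\infty}(w_0)\cap L^2(w_1)$, and this intersection need not lie in $L^r(w_0^{2-r}w_1^{r-1})$.

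On $(e,\infty)$ with Lebesgue measure, take $w_0\equiv1$, $w_1(x)=x\log^{-2}x$ and $g(x)=1/x$. Then $g\in L^{1,\infty}$ and $\int g^2w_1\,dx=\int_e^\infty\frac{dx}{x\log^2x}<\infty$. But $\int g^rw_1^{r-1}\,dx=\int_e^\infty\frac{dx}{x\log^{2(r-1)}x}=\infty$ for $1<r\le\frac32$. Consequently the rank-one operator $Tf=\bigl(\int_e^{e+1}f\,dx\bigr)g$ is of weak type $(1,1)$ on $L^1(dx)$ and bounded on $L^2(w_1\,dx)$, yet unbounded on $L^r(w_1^{r-1}dx)$.

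\textbf{Why your decomposition cannot repair this.} Splitting $X$ along level sets of $w_1/w_0$, or along the annuli $G_n\setminus G_{n+1}$, does not help because $T$ is not local. It moves mass between the pieces, which is exactly what both examples exploit, so one-measure bounds proved piece by piece do not add up.

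\textbf{What survives} is the single-weight case $w_0=w_1$, which is ordinary Marcinkiewicz for the measure $w_0\,d\mu$. The strong-type Stein--Weiss theorem with the corrected exponent also holds.
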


\section{Multipliers on $L^2_\alpha(\mathbb{G})$}
As we promised, the first step in our journey is to prove a convenient equivalence of norms. This is the content of the Lemma \ref{lemma3.1}.

\begin{lema}\label{lemma3.1}
Let $\G$ be a compact $p$-adic Lie group with dimension $d= \mathrm{dim}_{\Q_p} (\G)$, and let $\alpha > 0$. Assume $\G$ has the lower bound property from Definition \ref{defiLowerBound}. Then:
\begin{enumerate}
    \item[(i)] For all $\mathbf{x} \in \mathbb{G}$ and $\alpha > 0$ it holds:
    \[
    \|\mathbf{x}\|_{p}^\alpha \asymp I_\alpha(\mathbf{x}) := 
    \sum_{[\eta] \in \widehat{\G}} 
    d_\eta \|\eta\|_p^{-(\alpha+d)}  \, \|\eta(\mathbf{x}) - \mathrm{I}_{d_\eta} \|^2_{HS} .
    \]

    \item[(ii)] For $\alpha > 0$ and $f \in L^2_\alpha(\mathbb{G})$ it holds:
    \[
    \| f \|^2_{L^2_\alpha(\mathbb{G})} \asymp 
    \sum_{[\xi] \in \widehat{\mathbb{G}}} 
    \sum_{[\eta] \in \widehat{\mathbb{G}}}
    d_\xi d_\eta \, 
    \|\eta\|_p^{-(\alpha+d)} \, 
    \|\Delta_{\eta} \hat{f}(\xi)\|^2_{HS}.
    \]

    \item[(iii)] For $0 < \alpha < d$ and $f \in L^2(\mathbb{G})$ there exists $C>0$ such that
    \[
    \|f\|_{H^{- \alpha/2}_2(\mathbb{G})} \leq C \|f\|_{L^2_\alpha(\mathbb{G})}.
    \]
\end{enumerate}
\end{lema}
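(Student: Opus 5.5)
For (i) we split the sum $I_\alpha(\mathbf{x})$ according to the spheres $\widehat{\G}(n)$. We use $\|\eta\|_p=p^n$ on $\widehat{\G}(n)$ and $\sum_{[\eta]\in\widehat{\G}(n)} d_\eta^2 \asymp p^{nd}$ from Subsection 3.3. Fix $\mathbf{x}$ with $\|\mathbf{x}\|_p=p^{-k}$, so $\mathbf{x}\in\G_k\setminus\G_{k+1}$.

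Every $[\eta]$ with $\|\eta\|_p\le p^{k}$ is trivial on $\G_k$, so $\eta(\mathbf{x})=\mathrm{I}_{d_\eta}$ and these terms vanish. Hence
\[
I_\alpha(\mathbf{x})=\sum_{n>k}\sum_{[\eta]\in\widehat{\G}(n)} d_\eta\, p^{-n(\alpha+d)}\|\eta(\mathbf{x})-\mathrm{I}_{d_\eta}\|_{HS}^2 .
\]

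For the upper bound we use unitarity, $\|\eta(\mathbf{x})-\mathrm{I}\|_{HS}^2\le 4d_\eta$. This gives $I_\alpha(\mathbf{x})\lesssim \sum_{n>k}p^{nd}p^{-n(\alpha+d)}\asymp p^{-k\alpha}=\|\mathbf{x}\|_p^\alpha$, and the geometric series converges because $\alpha>0$.

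For the lower bound it suffices to keep the single shell $n=k+1$. The crude estimate of Lemma \ref{lemaLowerBoundVilenkin} is not needed here. On that shell $\eta$ is trivial on $\G_{k+1}$, so the eigenvalues of $\eta(\mathbf{x})$ are roots of unity of order dividing $|\G_k/\G_{k+1}|$, which is bounded. Each nonunit eigenvalue is therefore at distance $\gtrsim 1$ from $1$.

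We also need some $\eta$ in that shell with $\eta(\mathbf{x})\ne\mathrm{I}$. Since $\mathbf{x}\notin\G_{k+1}$, the image of $\mathbf{x}$ in the finite group $\G/\G_{k+1}$ is nontrivial, and Peter–Weyl provides $[\eta]\in\G_{k+1}^\perp\cap\widehat{\G}$ with $\eta(\mathbf{x})\neq \mathrm{I}$. Such an $\eta$ must lie in $\widehat{\G}(k+1)$, since smaller shells are trivial on $\G_k$. By the lower bound property (Definition \ref{defiLowerBound}), $\|\eta(\mathbf{x})-\mathrm{I}\|_{HS}^2\gtrsim d_\eta$.

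A single $\eta$ is not enough; we need a positive proportion of the mass $\sum_{\widehat{\G}(k+1)} d_\eta^2$ to be nontrivial at $\mathbf{x}$. For this we use the orthogonality identity $\sum_{[\eta]\in \widehat{\G/\G_{k+1}}} d_\eta\,\mathrm{Tr}\,\eta(\mathbf{x})=0$ for $\mathbf{x}\notin\G_{k+1}$. It gives $\sum d_\eta(d_\eta-\mathrm{Re}\,\mathrm{Tr}\,\eta(\mathbf{x}))=|\G/\G_{k+1}|$, and $d_\eta-\mathrm{Re}\,\mathrm{Tr}\,\eta(\mathbf{x})=\tfrac12\|\eta(\mathbf{x})-\mathrm{I}\|_{HS}^2$. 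So $\sum_{n\le k+1}\sum d_\eta\|\eta(\mathbf{x})-\mathrm{I}\|_{HS}^2 = 2p^{(k+1)d}$, and only $n=k+1$ contributes. This yields $I_\alpha(\mathbf{x})\gtrsim p^{-(k+1)(\alpha+d)}p^{(k+1)d}\asymp\|\mathbf{x}\|_p^\alpha$. This identity in fact makes the lower bound property unnecessary for (i), and I regard getting this uniform lower bound as the main obstacle.

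For (ii), write $\Delta_\eta\widehat f(\xi)=\mathcal{F}_\G[f\,(\eta-\mathrm{I})^*\otimes\cdot\,]$ blockwise, as in the formula after Definition \ref{def:diff_op}. The $(i,j)$ block is the Fourier transform of $f\,q^\eta_{ij}$ with $q^\eta_{ij}=\overline{\eta_{ji}}-\delta_{ij}$. Plancherel in $\xi$ gives
\[
\sum_\xi d_\xi\|\Delta_\eta\widehat f(\xi)\|_{HS}^2=\int_\G|f|^2\sum_{ij}|q^\eta_{ij}|^2=\int_\G |f(\mathbf{x})|^2\|\eta(\mathbf{x})-\mathrm{I}\|_{HS}^2\,d\mathbf{x}.
\]
Multiplying by $d_\eta\|\eta\|_p^{-(\alpha+d)}$, summing over $\eta$, and using Tonelli turns the right side into $\int|f|^2 I_\alpha$. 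Part (i) then gives the claim.

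For (iii), we have $\|f\|_{H^{-\alpha/2}_2}^2=\sum_\xi d_\xi\langle\xi\rangle^{-\alpha}\|\widehat f(\xi)\|_{HS}^2$. It is $\asymp \langle f, K*f\rangle$ with $K=\mathcal{F}^{-1}[\langle\cdot\rangle^{-\alpha}]$, a radial kernel which, by the shell computation above together with $\sum_{\widehat{\G}(n)}d^2\asymp p^{nd}$ and $0<\alpha<d$, satisfies $|K(\mathbf{x})|\lesssim\|\mathbf{x}\|_p^{\alpha-d}$. We then bound $\langle |f|,K*|f|\rangle$ by a weighted Hardy/Schur test with weight $\|\mathbf{x}\|_p^{\alpha}$. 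Alternatively, and more simply, we use Cauchy–Schwarz: $\|\widehat f(\xi)\|_{HS}^2\le \|f\|_{L^2_\alpha}^2\int\|\mathbf{x}\|_p^{-\alpha}$ per matrix entry. The condition $\alpha<d$ makes $\|\cdot\|_p^{-\alpha}$ integrable, and the pieces are summed over shells with the weight $\langle\xi\rangle^{-\alpha}$ using the dual counting estimate of Subsection 3.3.
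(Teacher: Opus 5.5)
Your proposal is correct provided that in (iii) you use the kernel route; the Cauchy--Schwarz alternative fails, as explained below. Part (ii) is exactly the paper's argument: blockwise Plancherel for $f\,q^\eta_{ij}$, then Tonelli and (i). The upper bound in (i) also matches the paper.

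\emph{Lower bound in (i).} Here you genuinely differ. The paper applies Lemma \ref{lemaLowerBoundVilenkin} (eigenvalue separation plus the lower bound property) to every $\eta$ on the shell $\|\eta\|_p=p\|\mathbf{x}\|_p^{-1}$. That tacitly requires $\eta(\mathbf{x})\neq\mathrm{I}_{d_\eta}$ for all such $\eta$, which is false in general: already for $\mathbb{Z}_p^d$ with $d\geq 2$, about a $1/p$ fraction of the characters on that shell is trivial at a given $\mathbf{x}$. Your identity
\[
\sum_{[\eta]\in\widehat{\G}(k+1)} d_\eta\|\eta(\mathbf{x})-\mathrm{I}_{d_\eta}\|_{HS}^2=2|\G/\G_{k+1}|
\]
comes from the character of the regular representation of $\G/\G_{k+1}$. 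This is legitimate because the congruence subgroups $\G_n$ are normal. It computes the shell mass exactly, which repairs that point. It also shows that (i), and hence (ii), hold without the lower bound property. Your preliminary eigenvalue and Peter--Weyl remarks are then superfluous.

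\emph{Part (iii).} You also differ from the paper here. The paper discretizes $f\in\mathcal{D}(\G)$ at level $n$ and bounds $\|\widehat f(\xi)\|_{HS}^2$ pointwise by $|\G/\G_n|^{-2}d_\xi\sum_{\mathbf g}|f(\mathbf g)|^2$. Cauchy--Schwarz over the $|\G/\G_n|$ cosets only yields $|\G/\G_n|^{-1}$ there (test $f\equiv 1$ at the trivial representation), and with that factor the final comparison is off by $|\G/\G_n|$.

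Your route is the sound one: write the norm as $\langle f,K*f\rangle$ with $K$ radial and $|K(\mathbf{x})|\lesssim\|\mathbf{x}\|_p^{\alpha-d}$, and prove a Stein--Weiss type bound. You should still carry out the Schur test.
\begin{itemize}
\item Use the symmetric kernel $k(\mathbf{x},\mathbf{y})=\|\mathbf{x}\|_p^{-\alpha/2}\|\mathbf{x}\smallstar\mathbf{y}^{-1}\|_p^{\alpha-d}\|\mathbf{y}\|_p^{-\alpha/2}$, built from $|K|$ rather than $K$.
\item Take the test function $h(\mathbf{x})=\|\mathbf{x}\|_p^{-\gamma}$ with $\alpha/2<\gamma<d-\alpha/2$.
\item Split according to whether $\|\mathbf{y}\|_p$ is smaller than, equal to, or larger than $\|\mathbf{x}\|_p$. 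This gives $\int_{\G}k(\mathbf{x},\mathbf{y})h(\mathbf{y})\,d\mathbf{y}\lesssim h(\mathbf{x})$.
\end{itemize}
This step is exactly where $0<\alpha<d$ enters.

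Drop your ``simpler'' Cauchy--Schwarz alternative. It gives $\|\widehat f(\xi)\|_{HS}^2\le C_\alpha d_\xi\|f\|_{L^2_\alpha}^2$ uniformly in $\xi$. But then
\[
\sum_{n\geq 1}\sum_{[\xi]\in\widehat{\G}(n)}d_\xi^2\|\xi\|_p^{-\alpha}\asymp\sum_n p^{n(d-\alpha)}
\]
diverges precisely in the range $\alpha<d$ where $\|\cdot\|_p^{-\alpha}$ is integrable. The tail counting estimate of Subsection 3.3 needs $\alpha>d$. So no bound that is uniform in $\xi$ can close the argument.
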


 \begin{proof}
\,
 
\textit{\textbf{(i)}} We begin by fixing $\mathbf{x} \in \mathbb{G}$, so that 
\[
\|\mathbf{x}\|_{p}^\alpha = p^{-\alpha k}.
\] 
Then we can check how $\eta (\mathbf{x}) = \mathrm{I}_{d_\eta}$ for $\|\eta \|_p \leq \|\mathbf{x}\|^{-1}_p$.  
So, in one hand we have
\begin{align*}
I_\alpha(\mathbf{x}) 
&= \sum_{\substack{[\eta] \in \widehat{\mathbb{G}} \\ \|\eta\|_p \geq p \|\mathbf{x}\|^{-1}_p}}
d_\eta \, \|\eta(\mathbf{x}) - \mathrm{I}_{d_\eta} \|^2_{HS} \, \|\eta\|_p^{-(\alpha+d)} \\
&\leq \sum_{\substack{[\eta] \in \widehat{\mathbb{G}} \\ \|\eta\|_p \geq p \|\mathbf{x}\|^{-1}_p}}
d_\eta (2d_\eta ) \|\eta\|_p^{-(\alpha+d)} \\
&= 2 \sum_{n=k+1}^\infty 
p^{-n(\alpha+d)}
\sum_{\substack{[\eta] \in \widehat{\mathbb{G}} \\ \|\eta\|_p = p^n}} 
d_\eta^2 = 2 p^{-\alpha} \frac{
1 - p^{-d}}{1-p^{- \alpha}} \|\mathbf{x}\|^\alpha_p \lesssim \|\mathbf{x}\|^\alpha_p.
\end{align*}

On the other hand, for $\|\eta\|_p \geq p \|\mathbf{x}\|_p^{-1}$ we can use the lower bound property and Lemma \ref{lemaLowerBoundVilenkin} to deduce:
\begin{align*}
I_\alpha(\mathbf{x}) 
&= \sum_{\substack{[\eta] \in \widehat{\mathbb{G}} \\ \|\eta \|_p \geq p \|\mathbf{x}\|_p^{-1}}}
d_\eta \|\eta\|_p^{-(\alpha+d)} \|\eta(\mathbf{x}) - \mathrm{I}_{ d_\eta}\|_{HS}^2   \\
&\gtrsim \sum_{\substack{ [\eta] \in \widehat{\mathbb{G}} \\ \| \eta \|_p \geq p \|\mathbf{x}\|_p^{-1}}}
d_\eta \|\eta \|_p^{-(\alpha+d)} \left( d_\eta \| \eta \|_p^{-2d} \|\mathbf{x}\|_p^{-2d}  \right)  \\
&\geq \sum_{\substack{ [\eta] \in \widehat{\mathbb{G}} \\ \|\eta\|_p = p \|\mathbf{x}\|_p^{-1}}}
d_\eta^2 \|\eta\|_p^{-(\alpha+d)}  \left( \|\eta\|_p^{-2d} \|\mathbf{x}\|_p^{-2d} \right) \\
&= (1 - p^{-(2d+1)}) p^{-\alpha} p^{-2(2d+1)} \|\mathbf{x}\|^\alpha_p \gtrsim \| \mathbf{x} \|_p^\alpha.
\end{align*}

\medskip

\noindent (ii) Let $f \in L^2(\mathbb{G})$ and let us denote $q^\eta_{hh'}(\mathbf{x}) := \eta (\mathbf{x})_{hh'} - \delta_{hh'} .$ Then
\begin{align*}
\int_{\mathbb{G}} |f(\mathbf{x})|^2 |\overline{q^\eta_{h'h}(\mathbf{x})}|^2 \, d\mathbf{x}
&= \sum_{[\xi] \in \widehat{\mathbb{G}}} 
d_\xi  \, \| \mathcal{F}_{\G} [ f\big( \overline{\eta(\mathbf{x})_{h'h}} - \delta_{hh'} \big)](\xi)\|_{HS}^2 \\
&= \sum_{[\xi] \in \widehat{\mathbb{G}}} 
d_\xi  \, \|\Delta_{\overline{q^\eta_{h'h}}} \hat{f}(\xi)\|_{HS}^2 \\
&= \sum_{[\xi] \in \widehat{\mathbb{G}}} 
d_\xi \, \| [\Delta_\eta \hat{f}(\xi)]_{hh'}\|_{HS}^2 .
\end{align*}

In this way,
\begin{align*}
\int_{\mathbb{G}} |f(\mathbf{x})|^2 \|\mathbf{x}\|_p^{\alpha} \, d\mathbf{x}
&\asymp \sum_{[\xi] \in \widehat{\mathbb{G}}} \sum_{[\eta] \in \widehat{\mathbb{G}}}
\sum_{1 \leq  h,h' \leq d_\eta}
d_\xi  d_\eta \, \|\eta\|_p^{-(\alpha+d)} 
\| [\Delta_\eta \hat{f}(\xi)]_{hh'} \|_{HS}^2 \\
&= \sum_{[\xi] \in \widehat{\mathbb{G}}} \sum_{[\eta] \in \widehat{\mathbb{G}}}
d_\xi d_\eta  \, \|\eta\|_p^{-(\alpha+d)} 
\|\Delta_\eta \hat{f}(\xi)\|_{HS}^2 \\
&= \sum_{[\xi] \in \widehat{\mathbb{G}}} \sum_{[\eta] \in \widehat{\mathbb{G}}}
d_\xi d_\eta\, 
\|\Delta^{\frac{\alpha+d}{2}}_\eta \hat{f}(\xi)\|_{HS}^2 .
\end{align*}

This concludes the proof.

\textbf{(iii)} Let $f \in \mathcal{D}(\mathbb{G})$, let us say $f(\mathbf{x}) = \sum_{g \in \mathbb{G} / \mathbb{G}_n} f(\mathbf{g}) \mathbb{1}_{\mathbf{g} \smallstar \mathbb{G}_n}(\mathbf{x}),$ so that
\[
\widehat{f}(\xi) = \sum_{\mathbf{g} \in \mathbb{G} / \mathbb{G}_n} f(\mathbf{g}) \pi^*_\xi(\mathbf{g}) |\mathbb{G} / \mathbb{G}_n|^{-1} \delta_{\mathbb{G}_n^\perp}(\xi).
\]
We can see that
\[
\|\widehat{f}(\xi)\|^2_{HS} \lesssim \sum_{\mathbf{g} \in \mathbb{G} / \mathbb{G}_n} |f(\mathbf{g})|^2 \|\xi^*(\mathbf{g})\|^2_{HS} |\mathbb{G} / \mathbb{G}_n|^{-2} \delta_{\mathbb{G}_n^\perp}(\xi)
\leq \sum_{\mathbf{g} \in G / \mathbb{G}_n} |f(\mathbf{g})|^2 |\mathbb{G} / \mathbb{G}_n|^{-2} d_\xi \, \delta_{\mathbb{G}_n^\perp}(\xi),
\]
and consequently
\[
\|f\|^2_{H^{-(\alpha/2)}_2(\mathbb{G})} 
= \sum_{[\xi] \in \widehat{\mathbb{G}}} d_\xi \|\xi\|^{-\alpha}_p \|\widehat{f}(\xi)\|^2_{HS}
\lesssim \sum_{\mathbf{g} \in \mathbb{G} / \mathbb{G}_n} |f(\mathbf{g})|^2 |\mathbb{G} / \mathbb{G}_n|^{-2}
\sum_{k \leq n} \sum_{[\xi] \in \widehat{G}(k)} d_\xi^2 \|\xi\|^{-\alpha}_p.
\]
Thus
\[
\|f\|^2_{H^{-(\alpha/2)}_2(\mathbb{G})} \lesssim 
\sum_{\mathbf{g} \in \mathbb{G} / \mathbb{G}_n} |f(\mathbf{g})|^2 |\mathbb{G} / \mathbb{G}_n|^{-2} 
\sum_{k \leq n} |\mathbb{G} / \G_k|^{1-\alpha/d}
\lesssim \sum_{\mathbf{g} \in \mathbb{G} / \mathbb{G}_n} |f(\mathbf{g})|^2 |\mathbb{G} / \mathbb{G}_n|^{-(\alpha/d+1)}.
\]

On the other hand,
\[
\|f\|^2_{L^2_\alpha(\mathbb{G})} 
= \int_{\mathbb{G}} |f(\mathbf{x})|^2 \|\mathbf{x}\|^\alpha_p \, d\mathbf{x}
= \sum_{\mathbf{g} \in \G / \mathbb{G}_n} |f(\mathbf{g})|^2 \int_{\mathbf{g} \smallstar \mathbb{G}_n} \|\mathbf{x}\|^\alpha_p \, d\mathbf{x}.
\]
Hence
\[
\|f\|^2_{L^2_\alpha(\mathbb{G})}
= \sum_{\mathbf{g} \in \mathbb{G} / \mathbb{G}_n, \mathbf{g} \neq \mathbf{e}} |f(\mathbf{g})|^2 \|\mathbf{g}\|^\alpha_p \int_{\mathbf{g} \smallstar \mathbb{G}_n} d\mathbf{x}
+ |f(\mathbf{e})|^2 \int_{\mathbb{G}_n} \|\mathbf{x}\|^\alpha_p \, d\mathbf{x}.
\]
But we know that $\int_{\mathbb{G}_n} \|\mathbf{x}\|^\alpha_p \, d\mathbf{x} \asymp |\mathbb{G} / \mathbb{G}_n|^{-(\alpha/d+1)},$ so, using the equalities $$\| \mathbf{g} \|_p^\alpha \geq p^{n \alpha} = |\G / \G_n|^{\alpha/d}, \quad \int_{\mathbf{g} \smallstar \mathbb{G}_n} d\mathbf{x} = |\G / \G_n|^{-1}, $$
we get in conclusion $\|f\|_{H^{-\alpha/2}_2(\mathbb{G})} \lesssim \|f\|_{L^2_\alpha(\mathbb{G})}.$

\end{proof}

We conclude this section with the proof of Theorem \ref{teomultL2v1}. As we anticipated, after the equivalence of norms given in Lemma \ref{lemma3.1}, the desired multiplier theorem follows as a simple consequence of the product rule for the difference operators.

\begin{proof}[Proof of Theorem \ref{teomultL2v1}]
    Just notice how the difference operators $\Delta_\eta$ have the following product rule:
\begin{align*}
\Delta_\eta (\sigma\widehat{ f})(\xi) 
&= \sigma(\eta \otimes \xi)\,\hat{f}(\eta \otimes \xi) 
   - \sigma(\mathrm{I}_{d_\eta} \otimes \xi)\,\hat{f}(\mathrm{I}_{d_\eta} \otimes \xi) \\
&= \sigma(\eta \otimes \xi)\,\Delta_\eta \hat{f}(\xi) 
   + \Delta_\eta \sigma(\xi)\,\hat{f}(\mathrm{I}_{d_\eta} \otimes \xi) \\
&= \sigma(\eta \otimes \xi)\,\Delta_\eta \hat{f}(\xi) 
   + \Delta_\eta \sigma(\xi)\,(\mathrm{I}_{d_\eta} \otimes \hat{f}(\xi)) \\ &= \Delta_\eta \sigma (\xi) \widehat{f}(\eta \otimes \xi) + \sigma (\mathrm{I}_{d_\eta} \otimes \xi ) \Delta_\eta \widehat{f} (\xi ).
\end{align*}

So we actually have the following estimate:
\begin{align*}
\|\Delta_\eta (\sigma\widehat{ f})(\xi)\|^2_{HS} 
&\lesssim 
\|\sigma(\eta \otimes \xi)\,\Delta_\eta \hat{f}(\xi)\|^2_{HS} 
+ \|\Delta_\eta \sigma(\xi)(\mathrm{I}_{d_\eta} \otimes \hat{f}(\xi))\|^2_{HS} \\
&\leq \|\sigma(\eta \otimes \xi)\|^2_{op}\,\|\Delta_\eta \hat{f}(\xi)\|^2_{HS} 
+ d_\eta \,\|\Delta_\eta \sigma(\xi)\|^2_{op}\,\|\hat{f}(\xi)\|^2_{HS}.
\end{align*}

Now we need to apply Lemma 3.1. First:
\begin{align*}
\quad & \|T_\sigma f \|^2_{L^2_\alpha(\mathbb{G})} 
\asymp \sum_{[\xi]\in \widehat{\mathbb{G}}}
\sum_{[\eta]\in \widehat{\mathbb{G}}}
d_\xi d_\eta \,\|\eta\|_p^{-(\alpha+d)}\,
\|\Delta_\eta (\sigma \widehat{f})(\xi)\|^2_{HS} \\
&\lesssim \sum_{[\xi]\in \widehat{\mathbb{G}}}
\sum_{[\eta]\in \widehat{\mathbb{G}}}
d_\xi d_\eta \,\|\eta\|_p^{-(\alpha+d)}\,
\Big( \|\sigma(\eta \otimes \xi)\|^2_{op}\,\|\Delta_\eta \hat{f}(\xi)\|^2_{HS} 
+ d_\eta \,\|\Delta_\eta \sigma(\xi)\|^2_{op}\,\|\hat{f}(\xi)\|^2_{HS} \Big) \\
&\lesssim \sup_{[\xi]\in \widehat{\mathbb{G}}} \|\sigma(\xi)\|^2_{op}\,\|f\|^2_{L^2_\alpha(\mathbb{G})}
+ \sum_{[\xi]\in \widehat{\mathbb{G}}}
\sum_{[\eta]\in \widehat{\mathbb{G}}}
d_\xi d_\eta^2 \,\|\eta\|^{-(\alpha+d)}_p\,
\|\Delta_\eta \sigma(\xi)\|^2_{op}\,\|\hat{f}(\xi)\|^2_{HS}.
\end{align*}

For the right-hand side, notice how
\begin{align*}
&\sum_{[\xi]\in \widehat{\mathbb{G}}}
\sum_{\substack{[\eta]\in \widehat{\mathbb{G}} \\ \|\eta\|_p < \|\xi\|_p}}
d_\xi d_\eta^2\,\|\eta\|_p^{-(\alpha+d)}\,
\|\Delta_\eta \sigma(\xi)\|^2_{op}\,\|\hat{f}(\xi)\|^2_{HS} \\
&= \sum_{[\xi]\in \widehat{\mathbb{G}}}
d_\xi
\left( \sum_{\substack{[\eta]\in \widehat{\mathbb{G}}\\ \|\eta\|_p<\|\xi\|_p}}
d_\eta^2\,
\|\Delta^{(\alpha+d)/2}_\eta \sigma(\xi)\|^2_{op}\right)\,
\|\hat{f}(\xi)\|^2_{HS} \\
&\lesssim \sum_{[\xi]\in \widehat{\mathbb{G}}}
d_\xi \,\|\xi\|^{-(\alpha+d)}_p
\left( \sum_{\substack{[\eta]\in \widehat{\mathbb{G}}\\ \|\eta\|_p<\|\xi\|_p}}
d_\eta^2\right)\,
\|\hat{f}(\xi)\|^2_{HS} \\
&\leq \sum_{[\xi]\in \widehat{\mathbb{G}}}
d_\xi \,\|\xi\|^{-\alpha}_p\,\|\hat{f}(\xi)\|^2_{HS} = \|f\|^2_{H^{-\alpha/2}(\mathbb{G})},
\end{align*}
so, applying Lemma \ref{lemma3.1} we can conclude
\begin{align*}
\sum_{[\xi]\in \widehat{\mathbb{G}}}
\sum_{\substack{[\eta]\in \widehat{\mathbb{G}}\\ \|\eta\|_p<\|\xi\|_p}}
d_\xi d_\eta^2 \,\|\eta\|_p^{-(\alpha+d)}\,
\|\Delta_\eta \sigma(\xi)\|^2_{op}\,\|\hat{f}(\xi)\|^2_{HS}
\lesssim \|f\|^2_{L^2_\alpha(\mathbb{G})}.
\end{align*}
To conclude the proof, just apply again Lemma \ref{lemma3.1}:

\begin{align*}
\sum_{[\xi]\in \widehat{\mathbb{G}}}
\sum_{\substack{[\eta]\in \widehat{\mathbb{G}}\\ \|\eta\|_p \geq \|\xi\|_p}}
d_\xi d_\eta^2 \,\|\eta\|_p^{-(\alpha+d)}\,
\|\Delta_\eta \sigma(\xi)\|^2_{op}\,\|\hat{f}(\xi)\|^2_{HS}
\lesssim \| f \|_{H^{\alpha/2}_2 (\G)}^2 \lesssim  \|f\|^2_{L^2_\alpha(\mathbb{G})}.
\end{align*}
\end{proof}

\subsection{One more example: $\G_{5,2}$}

To conclude this section, let us present another example of a nilpotent group $\G$ where our theorems apply. Let $p > 2$ be a prime number. The group $\mathbb{G}_{5,2}(\mathbb{Z}_p)$, or simply $\mathbb{G}_{5,2}$ for brevity, is defined as $\mathbb{Z}_p^5$ equipped with the non-commutative operation
\[
\mathbf{x} \star \mathbf{y} := \left(
x_1 + y_1,\,
x_2 + y_2,\,
x_3 + y_3,\,
x_4 + y_4 + x_1 y_2,\,
x_5 + y_5 + x_1 y_3
\right),
\]where the inverse element $\mathbf{x}^{-1}$ of $\mathbf{x}$ with respect to this operation is given by
\[
\mathbf{x}^{-1} = \left(
- x_1,\,
- x_2,\,
- x_3,\,
- x_4 + x_1 x_2,\,
- x_5 + x_1 x_3
\right).
\]

We can identify this group with the exponential image of the $\mathbb{Z}_p$-Lie algebra $\mathfrak{g}_{5,2}$ defined by the commutation relations
\[
[X_1, X_2] = X_4, \qquad [X_1, X_3] = X_5.
\]

The unitary dual $\widehat{\mathbb{G}}_{5,2}$ of $\mathbb{G}_{5,2}$ can be identified with the following subset of $\widehat{\mathbb{Z}_p^5}$:
\[
\widehat{\mathbb{G}}_{5,2}
=
\left\{
\xi \in \widehat{\mathbb{Z}_p^5} \;:\;
\begin{aligned}
&|\xi_4|_p > |\xi_5|_p,\; (\xi_1,\xi_2) \in \mathbb{Q}_p^2 / p^{\vartheta(\xi_4)}\mathbb{Z}_p, \\
&\text{or } |\xi_4|_p \leq |\xi_5|_p,\; (\xi_1,\xi_3) \in \mathbb{Q}_p^2 / p^{\vartheta(\xi_5)}\mathbb{Z}_p
\end{aligned}
\right\}.
\]

Moreover, we can write
\[
\widehat{\mathbb{G}}_{5,2} = A_1 \cup A_2 \cup A_3,
\]
where
\begin{align*}
A_1 &:= \left\{ \xi \in \widehat{\mathbb{Z}_p^5} : \|(\xi_4,\xi_5)\|_p = 1 \right\}, \\
A_2 &:= \left\{ \xi \in \widehat{\mathbb{Z}_p^5} :
\|(\xi_4,\xi_5)\|_p > 1,\;
|\xi_4|_p > |\xi_5|_p,\;
(\xi_1,\xi_2) \in \mathbb{Q}_p^2 / p^{\vartheta(\xi_4)}\mathbb{Z}_p
\right\}, \\
A_3 &:= \left\{ \xi \in \widehat{\mathbb{Z}_p^5} :
\|(\xi_4,\xi_5)\|_p > 1,\;
|\xi_4|_p \leq |\xi_5|_p,\;
(\xi_1,\xi_3) \in \mathbb{Q}_p^2 / p^{\vartheta(\xi_5)}\mathbb{Z}_p
\right\}.
\end{align*}
See \cite{VelasquezRodriguez2025} for more details. For this group, each unitary irreducible representation can be realized in the finite-dimensional Hilbert space
\[
\mathcal{H}_\xi
:=
\mathrm{span}_{\mathbb{C}}
\left\{
\|(\xi_4,\xi_5)\|_p^{1/2}
\, \1_{h + p^{\vartheta(\xi_4,\xi_5)}\mathbb{Z}_p}
:\;
h \in \mathbb{Z}_p / p^{-\vartheta(\xi_4,\xi_5)}\mathbb{Z}_p
\right\},
\]
with
\[
d_\xi := \dim_{\mathbb{C}}(\mathcal{H}_\xi) = \|(\xi_4,\xi_5)\|_p.
\]

The representation acts according to the formula
\[
\pi_\xi(\mathbf{x})\varphi(u)
:=
e^{2\pi i \{\xi \cdot \mathbf{x} + (\xi_4 x_2 + \xi_5 x_3)u\}_p}
\, \varphi(u + x_1),
\]and with a natural choice of basis for $\mathcal{H}_\xi$, the associated matrix coefficients are given by
\[
\pi_\xi(\mathbf{x})_{hh'}
=
e^{2\pi i \{\xi \cdot \mathbf{x} + (\xi_4 x_2 + \xi_5 x_3)h'\}_p}
\, \1_{h'-h + p^{-\vartheta(\xi_4,\xi_5)}\mathbb{Z}_p}(x_1),
\]
and the associated characters are
\[
\chi_{\pi_\xi}(\mathbf{x})
=
\|(\xi_4,\xi_5)\|_p
\, e^{2\pi i \{\xi \cdot \mathbf{x}\}_p}
\, \1_{p^{-\vartheta(\xi_4,\xi_5)}\mathbb{Z}_p}(x_1)
\, \1_{\mathbb{Z}_p}(\xi_4 x_2 + \xi_5 x_3).
\]
Similar to $\mathbb{H}_d$ and $\mathcal{B}_4$, given any $\mathbf{x} \in \G_{5,1} $ and any column of $\pi_\xi$, there is only one element of the column that can be non-zero, thus we have two possibilities for the entries of $\pi_\xi - \mathrm{I}_{d_\xi}$. Either $\pi_\xi - \mathrm{I}_{d_\xi}$ is a diagonal matrix when $x_1 \in p^{-\vartheta(\xi_4, \xi_5)}\Z_p$, or only one of the $\pi_xi(\mathbf{x})_{hh'}$ is non-zero and the diagonal is simply constant $-1$, when $x_1 \in \Z_p \setminus p^{-\vartheta(\xi_4, \xi_5e)}\Z_p$. We can compute
\[
\|\pi_\xi(\mathbf{x}) - \mathrm{I}_{d_\xi}\|^2_{HS} =
\begin{cases}
2\| (\xi_4 , \xi_5)\|_p, & \text{if } x_1 \in \mathbb{Z}_p \setminus p^{-\vartheta(\xi_4, \xi_5)}\mathbb{Z}_p, \\[6pt]
\displaystyle \sum_{h' \in \mathbb{Z}_p / p^{-\vartheta(\xi_4, \xi_5)} \mathbb{Z}_p}
\Big| e^{2\pi i \{\xi \cdot \mathbf{x} + (\xi_4 x_2 + \xi_5 x_3)h'\}_p} - 1 \Big|^2, & 
\text{if } x_1 \in p^{-\vartheta(\xi_4, \xi_5)}\mathbb{Z}_p.
\end{cases}
\]

Here the roots of the unity $e^{2\pi i \{\xi \cdot \mathbf{x} + (\xi_4 x_2 + \xi_5 x_3)h'\}_p}$ cannot be all one, and actually \textbf{at most one of the eigenvalues of $\pi_\xi (\mathbf{x})$ can be equal to 1}. So in conclusion we have the estimate

$$\# \{ h \in \mathbb{Z}_p / p^{-\vartheta(\xi_4, \xi_5)} \mathbb{Z}_p \, :  \,\, \lambda_h (\pi_\xi (\mathbf{x})) \neq 1 \}= d_\xi -1 \geq \big( 1 -\frac{1}{p^{5}} \big)  d_\xi, $$
leading us to the conclusion

\begin{align*}
\sum_{ h' \in \mathbb{Z}_p / p^{-\vartheta(\xi_4, \xi_5)} \mathbb{Z}_p } 
\left| e^{2\pi i \{\xi \cdot \mathbf{x} + (\xi_4 x_2 + \xi_5 x_3)h'\}_p} - 1 \right|^2 
&\gtrsim \sum_{h' \in \mathbb{Z}_p / p^{\vartheta (\xi_4 , \xi_5)} \mathbb{Z}_p}
\|\xi\|_p^{-2(5)} \|\mathbf{x}\|_p^{-2(5)} \\
&\geq   \|\xi\|_p^{-2(5)} \|\mathbf{x}\|_p^{-2(5)} (\|(\xi_4, \xi_5 )\|_p - 1) \\
&\gtrsim \|\xi\|_p^{-2(5)} \|\mathbf{x}\|_p^{-2(5)} \| (\xi_4 , \xi_5) \|_p.
\end{align*}

\section{Multipliers on $L^r_\alpha (G)$, $1 <r< \infty$.}

In this section we collect the proofs of our main results in the general abstract setting of compact Vilenkin groups. Considering that we are dealing with weighted Lebesgue spaces, in what follows we denote
by $\mu_{\alpha}$ the measure defined as
\[
\mu_{\alpha}(A):=\int_{A}|x|_{\mathcal{G}}^{\alpha}\,dx,
\]
so that $L^{r}_{\alpha}(G):=L^{r}(G,\mu_{\alpha})$ for $1\leq r<\infty$. We collect some properties of this
measure in the following proposition. See \cite{Onneweer1985} for a detailed proof.

\begin{pro}\label{promeasurewighted}
    Let $\alpha>-1$, $x\in G$ and $n\in\mathbb{N}_{0}$.
\begin{itemize}
    \item[(i)] $\mu_{\alpha}(G_{k})\asymp |G_{k}|^{\alpha+1}$.
    \item[(ii)] If $\alpha\leq 0$ and $(x \smallstar G_{k})^{*}:=x \smallstar G_{k}\setminus\{e\}$, then
    \[
    \mu_{\alpha}(g \smallstar G_{k})\lesssim |G_{k}|\inf\{|y|^{\alpha}:y\in x\smallstar G^{*}_{k}\}.
    \]
    \item[(iii)] $\mu_{\alpha}(x \smallstar G_{k})\lesssim \mu_{\alpha}(x \smallstar G_{k+1})$.
    \item[(iv)] $\mu_{\alpha}(G_{k})\asymp \mu_{\alpha}(G_{k}\setminus G_{k+1})$.
\end{itemize}
\end{pro}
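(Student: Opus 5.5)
The plan is to prove all four items by direct computation. We decompose each set into the shells $G_n\setminus G_{n+1}$, on which the weight $|x|_{\mathcal{G}}^\alpha$ is constant and equal to $|G_n|^\alpha$. Throughout we use the bounded-order hypothesis in the form
\[
\frac{|G_n|}{M} \le |G_{n+1}| \le \frac{|G_n|}{2}, \qquad M:=\sup_n m_n<\infty .
\]

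For (i), write
\[
\mu_\alpha(G_k)=\sum_{n\ge k}|G_n|^\alpha\,|G_n\setminus G_{n+1}|=\sum_{n\ge k}|G_n|^{\alpha+1}\Big(1-\frac{1}{m_n}\Big).
\]
Since $|G_n|\le 2^{-(n-k)}|G_k|$ and $\alpha+1>0$, the sum is dominated by the geometric series $|G_k|^{\alpha+1}\sum_{j\ge0}2^{-j(\alpha+1)}$, which gives the upper bound. For the lower bound we keep only the $n=k$ term, which is at least $\tfrac12|G_k|^{\alpha+1}$ because $m_k\ge2$. That same term is exactly $\mu_\alpha(G_k\setminus G_{k+1})$, so (iv) follows at once from (i) together with the trivial inequality $\mu_\alpha(G_k\setminus G_{k+1})\le\mu_\alpha(G_k)$.

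For (ii) and (iii) the key observation is the ultrametric constancy of $|\cdot|_{\mathcal{G}}$ on cosets. Suppose $x\in G_n\setminus G_{n+1}$ with $n<k$, and let $z\in G_k$. Then $xz\in G_n$ because $G_k\subset G_n$. Moreover $xz\notin G_{n+1}$, since otherwise $x=(xz)z^{-1}\in G_{n+1}$. Hence $|y|_{\mathcal{G}}=|x|_{\mathcal{G}}$ for every $y\in x\smallstar G_k$. This uses only that the $G_n$ are nested subgroups, so no normality is needed, and it gives $\mu_\alpha(x\smallstar G_k)=|G_k|\,|x|_{\mathcal{G}}^\alpha$ whenever $x\notin G_k$.

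For (ii), the case $x\notin G_k$ is now immediate: the infimum of $|y|^\alpha$ over the coset equals $|x|^\alpha_{\mathcal{G}}$. If $x\in G_k$, the coset is $G_k$ itself. By (i), $\mu_\alpha(G_k)\asymp|G_k|^{\alpha+1}$. On the other hand every $y\in G_k\setminus\{e\}$ has $|y|_{\mathcal{G}}\le|G_k|$, so for $\alpha\le0$ we get $\inf_y|y|_{\mathcal{G}}^\alpha\ge|G_k|^\alpha$. Combining these yields the claim.

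For (iii) we split into three cases.
\begin{enumerate}
\item If $x\notin G_k$, then both $x\smallstar G_k$ and $x\smallstar G_{k+1}$ carry the constant weight $|x|^\alpha_{\mathcal{G}}$, so their $\mu_\alpha$-measures have ratio $|G_k|/|G_{k+1}|\le M$.
\item If $x\in G_{k+1}$, the two cosets are $G_k$ and $G_{k+1}$. Then (i) together with $|G_{k+1}|\ge|G_k|/M$ gives the comparison.
\item If $x\in G_k\setminus G_{k+1}$, then $x\smallstar G_{k+1}\subset G_k\setminus G_{k+1}$. Hence $\mu_\alpha(x\smallstar G_{k+1})=|G_{k+1}|\,|G_k|^\alpha\ge M^{-1}|G_k|^{\alpha+1}$, which is $\gtrsim\mu_\alpha(G_k)$ by (i).
\end{enumerate}

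None of these steps is deep. The only point requiring care is the coset-constancy argument, which is where non-commutativity could in principle interfere; it goes through because the shells are defined by nested subgroups. The other thing to track is that bounded order is used in exactly two places: to bound $m_n$ (and hence $|G_k|/|G_{k+1}|$) from above, and to get geometric decay of the $|G_n|$. The latter is what makes the series in (i) converge, and it is where $\alpha>-1$ is needed.
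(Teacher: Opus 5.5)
Your proof is correct. The paper gives no proof of this proposition and only refers to Onneweer's abelian treatment. Your shell-by-shell computation is essentially that argument, and you correctly checked the one point where noncommutativity could matter: $|\cdot|_{\mathcal{G}}$ is constant on $x\smallstar G_k$ for $x\notin G_k$, which needs only that the $G_n$ are nested subgroups.

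Two small remarks:
\begin{itemize}
\item The geometric decay $|G_n|\le 2^{-(n-k)}|G_k|$ used in (i) comes from $m_n\ge 2$. That is part of the definition of a Vilenkin group, not a consequence of bounded order.
\item Bounded order, which the proposition as printed omits, is genuinely necessary for (iii). For $x\notin G_k$ one has $\mu_\alpha(x\smallstar G_k)/\mu_\alpha(x\smallstar G_{k+1})=m_k$ exactly, so (iii) fails if the $m_k$ are unbounded. You were right to impose it. The paper assumes it implicitly through the order-bounded setting it takes from Onneweer, and it holds automatically for the standard filtration of a compact $p$-adic Lie group.
\end{itemize}
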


Next we need to make use of the following two propositions. The first is a Calderón–Zygmund
decomposition for functions in $L^{1}_{\alpha}$, and the second is duality theorem for multipliers on
$L^{r}_{\alpha}$–spaces. The proof of both propositions can be found in \cite{Onneweer1985}, where we refer the reader
for more details.

\begin{pro}\label{proCalderonZygmund}
    Let $-1<\alpha\leq 0$, let $\varphi\in L^{1}_{\alpha}(G)$, and let $\gamma>0$ be given. Then there exist functions $\{\varphi_{j}\}_{j\in\mathbb{N}_{0}}$ such that
\begin{itemize}
    \item[(i)] $\varphi=\sum_{j}\varphi_{j}$,
    \item[(ii)] $\varphi_{j}\in L^{1}_{\alpha}(G)$ for each $j\geq 0$,
    \item[(iii)] $\sum_{j}\|\varphi_{j}\|_{L^{1}_{\alpha}(G)}\lesssim \|\varphi\|_{L^{1}_{\alpha}(G)}$,
    \item[(iv)] $|\varphi_{0}(x)|\lesssim \gamma$,
    \item[(v)] There exist disjoint sets $I_{j}=g_{j} \smallstar G_{m(j)}=G_{m(j)}\smallstar g_{j}$ such that $\mathrm{supp}(\varphi_{j})\subset I_{j}$ for $j\in\mathbb{N}$, and we use the notation $D_{\gamma}:=\bigcup_{n\in\mathbb{N}_{0}}I_{j}$,
    \item[(vi)] $\sum_{j}\mu_{\alpha}(I_{j})\leq \gamma^{-1}\|\varphi\|_{L^{1}_{\alpha}(G)}$,
    \item[(vii)] $\displaystyle\int_{I_{j}}\varphi_{j}(x)\,dx=0$ for $j\in\mathbb{N}$.
\end{itemize}

\end{pro}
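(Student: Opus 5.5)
We construct the decomposition by a stopping-time argument on the nested family of cosets $\{x\smallstar G_k\}$. This is the weighted analogue of the classical Calderón–Zygmund construction, and it needs only the filtration $\{G_n\}$ and Proposition \ref{promeasurewighted}; noncommutativity plays no role. Since each $G_k$ is a compact open subgroup, hence normal-in-its-coset in the sense that left and right cosets are handled the same way, we may use the left cosets $g\smallstar G_k$. Two such cosets are either nested or disjoint, so they form a tree exactly as in the abelian case. We may assume $\gamma\,\mu_\alpha(G) > C\|\varphi\|_{L^1_\alpha(G)}$; otherwise we take $\varphi_0=0$ and a single $I_1=G$, with $\varphi_1=\varphi-\int_G\varphi$ suitably adjusted. (Compactness makes this degenerate case harmless.)

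\textbf{Selecting the cosets.} Call a coset $I=g\smallstar G_k$ \emph{bad} if
\[
\frac{1}{\mu_\alpha(I)}\int_I|\varphi(x)|\,|x|_{\mathcal G}^{\alpha}\,dx>\gamma .
\]
Let $\{I_j\}_{j\ge1}$ be the maximal bad cosets. Maximal elements exist because $G=G_0$ is not bad, and because the averages are bounded on chains by $\|\varphi\|/\mu_\alpha$. The $I_j$ are pairwise disjoint by the nesting property. Summing $\mu_\alpha(I_j)<\gamma^{-1}\int_{I_j}|\varphi|\,d\mu_\alpha$ over $j$ gives (vi). By maximality, the parent $\tilde I_j\supset I_j$ (one level up) is good. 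Then Proposition \ref{promeasurewighted}(iii) gives $\mu_\alpha(\tilde I_j)\lesssim\mu_\alpha(I_j)$, and therefore
\[
\frac{1}{\mu_\alpha(I_j)}\int_{I_j}|\varphi|\,d\mu_\alpha\lesssim\gamma .
\]
The bounded-order assumption is what makes (iii) usable here.

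\textbf{Defining the pieces.} Since (vii) asks for zero Haar mean, set
\[
\varphi_j:=\Bigl(\varphi-\tfrac{1}{|I_j|}\textstyle\int_{I_j}\varphi\,dx\Bigr)\mathbb 1_{I_j},\qquad
\varphi_0:=\varphi\mathbb 1_{G\setminus D_\gamma}+\sum_j\Bigl(\tfrac{1}{|I_j|}\textstyle\int_{I_j}\varphi\Bigr)\mathbb 1_{I_j}.
\]
This gives (i), (v) and (vii) immediately. For (iv) off $D_\gamma$, every point $x\notin D_\gamma$ lies only in good cosets, which shrink to $x$. A weighted Lebesgue differentiation theorem along the filtration gives $|\varphi(x)|\le\gamma$ a.e. there; this uses that $|x|^\alpha_{\mathcal G}$ is locally constant away from $e$, and $\{e\}$ is null. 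On $I_j$ we must bound the Haar average by the $\mu_\alpha$-average. For this we use $\alpha\le0$ and Proposition \ref{promeasurewighted}(ii):
\[
\Bigl|\tfrac{1}{|I_j|}\textstyle\int_{I_j}\varphi\Bigr|
\le\frac{1}{|I_j|\inf_{I_j^*}|y|^\alpha}\int_{I_j}|\varphi|\,|x|^\alpha dx
\lesssim\frac{1}{\mu_\alpha(I_j)}\int_{I_j}|\varphi|\,d\mu_\alpha\lesssim\gamma .
\]
For (ii) and (iii), the same inequality multiplied by $\mu_\alpha(I_j)$ gives
\[
\|\varphi_j\|_{L^1_\alpha}\le 2\int_{I_j}|\varphi|\,d\mu_\alpha ,
\]
and $\|\varphi_0\|_{L^1_\alpha}\lesssim\|\varphi\|_{L^1_\alpha}$ follows likewise. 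Summing over the disjoint $I_j$ yields (iii).

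\textbf{Main obstacle.} The delicate step is the weight comparison in the last display, i.e. controlling $\int_{I_j}|\varphi|\,dx$ by $\mu_\alpha(I_j)^{-1}|I_j|\int_{I_j}|\varphi|\,d\mu_\alpha$. For cosets not containing $e$, the weight is essentially constant on $I_j$, so this is trivial. For $I_j=G_k$, the weight $|x|^\alpha$ with $\alpha\le0$ is smallest at the outer shell $G_k\setminus G_{k+1}$. Here we would check directly, using $\mu_\alpha(G_k)\asymp|G_k|^{\alpha+1}$ and $|x|^{\alpha}\ge|G_k|^{\alpha}$ on $G_k$, that $|G_k|\inf|y|^\alpha\asymp\mu_\alpha(G_k)$. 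This is exactly where $-1<\alpha\le0$ is needed.
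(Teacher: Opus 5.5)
Your stopping-time construction is the standard Onneweer argument that the paper cites: maximal cosets on which the $\mu_\alpha$-average of $|\varphi|$ exceeds $\gamma$, subtraction of Haar means, Proposition~\ref{promeasurewighted}(ii) to dominate the Haar average by the $\mu_\alpha$-average, (iii) for the good parent, and differentiation along the filtration off $D_\gamma$. It works whenever $G$ itself is not bad, i.e.\ $\gamma\geq\mu_\alpha(G)^{-1}\|\varphi\|_{L^1_\alpha(G)}$.

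\textbf{The gap is your opening reduction.} With $\varphi_0=0$ and a single piece on $I_1=G$, (i) forces $\varphi_1=\varphi$, which violates (vii) unless $\int_G\varphi\,dx=0$. Any adjustment moves $\int_G\varphi\,dx$ into $\varphi_0$ and violates (iv). No other choice can work either:
\begin{itemize}
\item for $\alpha\leq0$ one has $|x|_{\mathcal G}^{\alpha}\geq1$, so (iii) makes $\sum_j|\varphi_j|$ Haar-integrable;
\item integrating (i) and using (vii) then gives $\int_G\varphi\,dx=\int_G\varphi_0\,dx$;
\item so (iv) forces $|\int_G\varphi\,dx|\lesssim\gamma$.
\end{itemize}
Taking $\varphi\equiv1$, $\alpha=0$ and $\gamma\to0$ shows that the proposition, as stated for all $\gamma>0$, is false on a compact group. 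In Onneweer's setting $G=\bigcup_{n\in\mathbb Z}G_n$ has infinite measure, so this case never arises there; it was lost in the transfer to the compact case. You should prove the decomposition only for $\gamma\geq\mu_\alpha(G)^{-1}\|\varphi\|_{L^1_\alpha(G)}$. This suffices for Lemma~\ref{lemaweak11}, because for smaller $\gamma$ one trivially has $\mu_\alpha(\{|T_{\sigma_k}\varphi|>\gamma\})\leq\mu_\alpha(G)<\gamma^{-1}\|\varphi\|_{L^1_\alpha(G)}$.

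\textbf{Normality in (v).} The phrase ``normal-in-its-coset'' does not justify $g_j\smallstar G_{m(j)}=G_{m(j)}\smallstar g_j$. Compact open subgroups need not be normal, and this equality holds exactly when $g_j$ normalizes $G_{m(j)}$. It does hold for the filtrations the paper actually uses: the congruence subgroups $\mathbb G_n$ are kernels of reduction mod $p^n$, and the paper treats $G/G_n$ as a group. So invoke normality explicitly. The nesting and disjointness of left cosets needed for the stopping time require no normality at all.

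\textbf{Minor points.} The constant in $\|\varphi_j\|_{L^1_\alpha}\leq2\int_{I_j}|\varphi|\,d\mu_\alpha$ should be $1+C$, with $C$ from Proposition~\ref{promeasurewighted}(ii). Maximal bad cosets exist simply because every coset has finitely many ancestors, not because of any bound on averages along chains.
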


\begin{pro}\label{prointerpolationDual}
    Let $\sigma\in L^{\infty}_{\mathrm{op}}(\widehat{G})$, $1<r<\infty$, $-1<\alpha<r-1$. Assume that there exists $C>0$ so that for all $f\in\mathcal{D}(G)$ we have
\[
\|T_{\sigma}f\|_{L^{r}_{\alpha}(G)}\leq C\|f\|_{L^{r}_{\alpha}(G)}.
\]
Then we have for all $f\in\mathcal{D}(G)$ and with the same constant $C$
\[
\|T_{\sigma}f\|_{L^{r'}_{(1-r')\alpha}(G)}\leq C\|f\|_{L^{r'}_{(1-r')\alpha}(G)}.
\]
Thus $T_{\sigma}$ is a multiplier on $L^{r}_{\alpha}(G)$ if and only if $T_{\sigma}$ is a multiplier on
$L^{r'}_{(1-r')\alpha}(G)$.
\end{pro}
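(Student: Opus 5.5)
The plan is to argue by duality with respect to the \emph{unweighted} bilinear pairing $\langle f,g\rangle:=\int_G f(x)g(x)\,dx$, and then to use the symmetry of the weight $|x|_{\mathcal{G}}^{\alpha}$ to return from the dual operator to $T_\sigma$ itself. The second step is the one I cannot yet close.

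\textbf{Step 1: weighted duality.} Writing $w(x)=|x|_{\mathcal G}^{\alpha}$, Hölder's inequality applied to $fw^{1/r}\cdot gw^{-1/r}$ gives
\[
\Big|\int_G fg\,dx\Big|\le \|f\|_{L^r_\alpha(G)}\,\|g\|_{L^{r'}_{(1-r')\alpha}(G)},
\]
since $w^{-r'/r}=w^{1-r'}$. The condition $-1<\alpha<r-1$ is exactly $-1<(1-r')\alpha$, so both weights are locally integrable by Proposition \ref{promeasurewighted}(i). Moreover $\mathcal{D}(G)$ is dense in both spaces, and
\[
\|g\|_{L^{r'}_{(1-r')\alpha}(G)}=\sup\Big\{\Big|\int_G fg\,dx\Big| : f\in\mathcal D(G),\ \|f\|_{L^r_\alpha(G)}\le1\Big\}.
\]
Hence, if $T^t$ denotes the formal transpose, $\langle T_\sigma f,g\rangle=\langle f,T^tg\rangle$, then $T^t$ is bounded on $L^{r'}_{(1-r')\alpha}(G)$ with the same constant $C$. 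The converse direction follows from the same computation with the roles of $r$ and $r'$ exchanged, because $(1-r)(1-r')\alpha=\alpha$.

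\textbf{Step 2: identify $T^t$.} Write $T_\sigma f(y)=\int_G f(x)\,k(x^{-1}\smallstar y)\,dx$ with $k=\mathfrak r_\sigma$. Then
\[
T^tg(y)=\int_G k(y^{-1}\smallstar x)\,g(x)\,dx ,
\]
which is again left-invariant. Its kernel is $\tilde k(z)=k(z^{-1})$, and its symbol is $\sigma^t(\pi)=\sigma(\bar\pi)^{T}$.

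\textbf{Step 3: return to $T_\sigma$ (main obstacle).} The tools available are the isometries $Jf(x)=f(x^{-1})$ and $f\mapsto\bar f$ of every $L^s_\beta(G)$. Both are isometries because Haar measure on a compact group is inversion invariant and $|x^{-1}|_{\mathcal G}=|x|_{\mathcal G}$, as the $G_n$ are subgroups. A direct computation gives
\[
JT^tJg(y)=\int_G k(y\smallstar x^{-1})\,g(x)\,dx=(k\ast g)(y),
\]
that is, convolution with $k$ on the \emph{other} side. So what the duality really produces is: $g\mapsto k\ast g$ is bounded on $L^{r'}_{(1-r')\alpha}(G)$.

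To pass from $k\ast g$ to $g\ast k=T_\sigma g$, I would try to exploit that the $G_n$ are normal, as they are for the groups $\mathbb G$ considered here. In that case the weight is also invariant under conjugation $x\mapsto h\smallstar x\smallstar h^{-1}$, and I would try to average the relation $k\ast g$ versus $g\ast k$ over such conjugations. I expect this step to fail in general: left and right convolution by the same non-central kernel need not have the same weighted norm. So the proof can be completed only if the convolution kernel $\mathfrak r_\sigma$ is a class function, equivalently if each $\sigma(\pi)$ is a scalar multiple of $\mathrm I_{d_\pi}$. This covers the Vladimirov–Taibleson symbols and the Littlewood–Paley projections used later.

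Absent that assumption, what Steps 1–3 rigorously deliver is the boundedness of $T_{\sigma^t}$, equivalently of $T_{\sigma^*}$ after complex conjugation, on $L^{r'}_{(1-r')\alpha}(G)$. The remaining gap must then be filled by an extra symmetry hypothesis on $\sigma$.
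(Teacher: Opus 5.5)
\textbf{The gap in Step 3 cannot be filled, because the proposition is false for noncommutative $G$.} Your Steps 1 and 2 are correct. They are exactly the duality argument behind the abelian result of \cite{Onneweer1985}, which the paper cites instead of giving a proof. Step 3 is where that argument breaks, and you located the break precisely: in the abelian case $JT^{t}J=T_\sigma$, while for noncommutative $G$ duality only controls $g\mapsto\mathfrak{r}_\sigma\ast g$, i.e.\ (after conjugation) $T_{\sigma^*}$. Averaging over conjugations cannot help, since the statement already fails with $\alpha=0$.

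Here is a counterexample. Take $n$ even and let $\pi$ be the standard representation of $S_n$ on $\{v\in\mathbb{C}^n:\sum_u v(u)=0\}$, $\pi(x)v=v\circ x^{-1}$. Let $e_1$ be the normalization of $\delta_n-\frac1n\mathbf{1}$, let $e_2=n^{-1/2}\epsilon$ with $\epsilon$ a balanced $\pm1$ vector, and set $\sigma(\pi)=e_1e_2^{*}$, with $\sigma=0$ on all other irreducibles. Then:
\begin{itemize}
\item $\|T_\sigma\|_{L^2\to L^2}=1$.
\item $T_\sigma$ vanishes on the orthogonal complement of $V_2=\{g_w:=\langle\pi(\cdot)e_2,w\rangle\}$.
\item $T_\sigma g_w=\langle\pi(\cdot)e_1,w\rangle=c_n\overline{w(x(n))}$, with $c_n=(1-1/n)^{-1/2}$.
\end{itemize}
\emph{Bound on $L^r$, $1<r<2$.} Each $g_w(y)=n^{-1/2}\sum_u\epsilon(y^{-1}u)\overline{w(u)}$ is a sum against a uniformly random balanced sign pattern. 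Hoeffding's comparison of sampling without and with replacement, together with Rosenthal's inequality, gives $\|g\|_{L^q}\le C_q\|g\|_{L^2}$ on $V_2$ for $q\ge 2$, uniformly in $n$. Hence
\[
\|T_\sigma f\|_{L^r}\le\|P_{V_2}f\|_{L^2}=\sup\{|\langle f,g\rangle|:g\in V_2,\ \|g\|_{L^2}\le1\}\le C_{r'}\|f\|_{L^r}.
\]
\emph{Growth on $L^{r'}$.} For $w=\delta_1-\frac1n\mathbf{1}$ one has $|g_w|\equiv n^{-1/2}$, while $T_\sigma g_w=c_n(\mathbf{1}_{\{x(n)=1\}}-\frac1n)$. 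Therefore $\|T_\sigma\|_{L^{r'}\to L^{r'}}\gtrsim n^{1/2-1/r'}=n^{1/r-1/2}$.

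\emph{Transfer to a compact Vilenkin group.} Place these symbols on the $n$-th factor of $G=\prod_n S_n$, filtered by $G_m=\prod_{n\ge m}S_n$. For bounded order, use instead tensor powers on $S_N^{\mathbb{N}}$ for one fixed large $N$. Either way, the same-constant claim fails, and by the closed graph theorem so does the qualitative equivalence.

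\textbf{What your argument does prove, and what it means downstream.} Using the sesquilinear pairing, Steps 1--2 give the correct noncommutative statement:
\[
\|T_\sigma\|_{\mathcal{L}(L^r_\alpha(G))}=\|T_{\sigma^*}\|_{\mathcal{L}(L^{r'}_{(1-r')\alpha}(G))},\qquad \sigma^*(\pi)=\sigma(\pi)^*.
\]
Your claim that the proof closes only when $\mathfrak{r}_\sigma$ is central is therefore too strong. Any symbol with $\sigma(\pi)^*=\sigma(\pi)$ also works, and the real scalar Littlewood--Paley symbols are covered either way.

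In the interpolation theorem, the passage to $r>2$ must be run with hypotheses on $\sigma^*$. This is harmless for the difference conditions of Theorem \ref{TeoMultiplierLr}, since $\Delta_\eta(\sigma^*)(\xi)=(\Delta_\eta\sigma(\xi))^*$. It is not harmless for $H(t)$: for $\sigma^*$ it becomes the analogous estimate for left differences $\mathfrak{r}_\sigma(y\smallstar x)-\mathfrak{r}_\sigma(x)$, which would have to be assumed separately.
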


In order to proceed, it is time to prove how we can use condition $H(t)$ to prove the boundedness
of a Fourier multiplier on $L^{r}_{\alpha}(G)$. The sketch of the idea is the following:

\begin{itemize}
    \item[(i)] The boundedness of our operators comes from an interpolation argument.
    \item[(ii)] The argument is: if $T_\sigma$ is bounded on $L^2_\alpha (G)$ and bounded from $L^{1,w}_\alpha (G)$ to $L^{1}_\alpha (G)$, then it has to be bounded on $L^r_\alpha (G)$. This is a consequence of the interpolation Theorem \ref{teoMarcinkiewicz}.   
    \item[(iii)] For the boundedness from $L^{1,w}_\alpha (G)$ to $L^{1}_\alpha (G)$ we use condition $H(t)$ and the Calderon-Zygmund decomposition given in Proposition \ref{proCalderonZygmund}.
    \item[(iv)] For the boundedness on $L^2_\alpha (G)$ we use Theorem \ref{teomultL2v1}. 
    \item[(v)]  The Stein-Weiss interpolation given in Theorem \ref{teoSteinWeiss} proves the boundedness on $L^r_\alpha (G)$ for $1<r<2$, provided the boundedness on the endpoint spaces. 
    \item[(vi)] The standard duality argument given in Proposition \ref{prointerpolationDual} proves the boundedness for the case $2<r<\infty$.    
\end{itemize}

Our next goal is to derive the weak $(1,1)$ estimate by means of condition $H(t)$ and the Calderón--Zygmund decomposition. This will be accomplished in Lemma \ref{lemaweak11}. We begin by introducing notation that will make the subsequent arguments more transparent.
\begin{rem}
    Let $\sigma\in L^{\infty}_{\mathrm{op}}(G)$.  and for $k\in\mathbb{Z}$ let $\sigma_{k}:=\widehat{\epsilon_k }\sigma$. We are using the same notation as in Definition \ref{conditionH(t)}.  Since $\mathcal{D}(G)$ equals the space of trigonometric polynomials on $G$, then $T_{\sigma_{k}}f=T_{\sigma}f$ for $k$ large enough. Thus to prove the boundedness of $T_{\sigma}$ on $L^{r}_{\alpha}$–spaces or weak $L^{r}_{\alpha}$–spaces is equivalent to prove the uniform boundedness of the operators $T_{\sigma_{k}}$. This is a fact that we will exploit along our exposition. 
\end{rem}
Now we are ready for our lemma: 

\begin{lema}\label{lemaweak11}
    Let $\sigma\in L^{\infty}_{\mathrm{op}}(G)$ and assume that $\sigma$ satisfies the condition $H(t)$ for some $t\geq 1$. If $T_{\sigma}$ is of type $(2,2)$ on $L^{2}_{\alpha}(G)$ for some $\alpha$ with $-\frac{1}{t'}<\alpha\leq 0$, then $T_{\sigma}$ is of weak type $(1,1)$ on $L^{1}_{\alpha}(G)$.
\end{lema}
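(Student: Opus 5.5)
The plan is the classical Calderón--Zygmund argument, adapted from \cite{Onneweer1985} to the noncommutative setting. By the preceding remark, $T_\sigma f = T_{\sigma_k} f$ for $f\in\mathcal{D}(G)$ and $k$ large. It therefore suffices to prove the weak $(1,1)$ bound on $L^1_\alpha(G)$ for the operators $T_{\sigma_k}$, with constants independent of $k$. Each $T_{\sigma_k}$ is right convolution with the locally constant kernel $K_k := \mathcal{F}_G^{-1}[\widehat{\epsilon}_{G_k}\sigma]$, so that $T_{\sigma_k}\varphi(x) = \int_G \varphi(y) K_k(y^{-1}\smallstar x)\,dy$ (with the orientation fixed by the definition of $\mathfrak{r}_\sigma$). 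The $L^2_\alpha$-bound of $T_{\sigma_k}$ is uniform in $k$, because $\widehat{\epsilon}_{G_k}\sigma$ is obtained from $\sigma$ by composing $T_\sigma$ with convolution by $\epsilon_k$. That convolution is an averaging operator over cosets of $G_k$, which is bounded on $L^2_\alpha$ uniformly in $k$ by Proposition \ref{promeasurewighted}(iii).

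\textbf{Step 1 (decomposition).} Fix $\varphi\in L^1_\alpha(G)$ and $\gamma>0$, and apply Proposition \ref{proCalderonZygmund} to write $\varphi=\varphi_0+\sum_{j\ge1}\varphi_j$. We estimate $\mu_\alpha\{|T_{\sigma_k}\varphi|>\gamma\}$ by splitting into the good part $\varphi_0$ and the bad part $\sum_{j\ge 1}\varphi_j$.

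\textbf{Step 2 (good part).} By (iii) and (iv) of Proposition \ref{proCalderonZygmund}, $\|\varphi_0\|^2_{L^2_\alpha}\lesssim \gamma\|\varphi_0\|_{L^1_\alpha}\lesssim\gamma\|\varphi\|_{L^1_\alpha}$. Chebyshev's inequality and the $(2,2)$ hypothesis then give
$$\mu_\alpha\{|T_{\sigma_k}\varphi_0|>\gamma/2\}\lesssim \gamma^{-1}\|\varphi\|_{L^1_\alpha}.$$

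\textbf{Step 3 (bad part).} Replace each $I_j = g_j\smallstar G_{m(j)}$ by a slightly enlarged set $\tilde I_j$, namely the coset of $G_{m(j)-1}$ (or $I_j$ itself; Proposition \ref{promeasurewighted}(iii) keeps the measures comparable). Then $\mu_\alpha(\bigcup_j\tilde I_j)\lesssim\gamma^{-1}\|\varphi\|_{L^1_\alpha}$ by property (vi). Off $\tilde D:=\bigcup_j\tilde I_j$ we use the cancellation (vii) and the fact that $I_j=G_{m(j)}\smallstar g_j$ is two-sided:
$$T_{\sigma_k}\varphi_j(x)=\int_{I_j}\varphi_j(y)\big[K_k(y^{-1}\smallstar x)-K_k(g_j^{-1}\smallstar x)\big]dy .$$
Here $y^{-1}\smallstar x$ differs from $g_j^{-1}\smallstar x$ by left translation by an element $z\in G_{m(j)}$. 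The difference is thus exactly the increment controlled by $H(t)$, with $l=m(j)$ and with the translate placed as in Definition \ref{conditionH(t)}, up to left/right conventions handled by the normality of $G_l$.

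It remains to show $\int_{G\setminus\tilde I_j}|T_{\sigma_k}\varphi_j|\,d\mu_\alpha\lesssim\|\varphi_j\|_{L^1_\alpha}$ and sum over $j$ using (iii). For $t=1$ (and $\alpha=0$) this is immediate from Fubini. For $t>1$, decompose $G\setminus \tilde I_j$ into shells $g_j\smallstar(G_n\setminus G_{n+1})$ with $n<m(j)$. On each shell apply Hölder's inequality with exponents $t,t'$ to the weight $|x|^\alpha_{\mathcal G}$, and then use $H(t)$:
$$\int_{\text{shell}}|K_k(\cdot)-K_k(\cdot)|\,|x|^\alpha\,dx\lesssim |G_n|^{-(\epsilon+1/t')}|G_{m(j)}|^{\epsilon}\Big(\int_{\text{shell}}|x|^{\alpha t'}dx\Big)^{1/t'} .$$
Since $\alpha t'>-1$ (this is where $-1/t'<\alpha$ is used), Proposition \ref{promeasurewighted}(i),(iv) give $(\int_{\text{shell}}|x|^{\alpha t'})^{1/t'}\lesssim |G_n|^{1/t'}\sup_{\text{shell}}|x|^\alpha$. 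Summing the geometric series $\sum_{n<m(j)}(|G_{m(j)}|/|G_n|)^{\epsilon}$ gives a constant. Finally, the weight $|x|^\alpha$ on the shell is compared with $|y|^\alpha$ for $y\in I_j$. Because $\alpha\le0$, Proposition \ref{promeasurewighted}(ii) gives $\inf_{I_j}|y|^\alpha\cdot|I_j|\gtrsim\mu_\alpha(I_j)$, which converts $\int_{I_j}|\varphi_j|\,dy$ into $\|\varphi_j\|_{L^1_\alpha}$.

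\textbf{Main obstacle.} I expect the hardest part to be this last weight bookkeeping. One must distinguish whether $e\in I_j$ (where the weight is singular) or not. In the latter case the ultrametric forces $|x|_{\mathcal G}\ge|y|_{\mathcal G}$ only on the outer shells, so the near shells must be treated separately using (ii) of Proposition \ref{promeasurewighted}. A secondary concern is making sure that noncommutativity does not spoil the matching between the kernel increment and $H(t)$; the two-sided cosets provided by property (v) are what I will rely on for that.
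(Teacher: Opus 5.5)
Your outline follows the paper's argument: Calder\'on--Zygmund decomposition, Chebyshev plus the $L^2_\alpha$ bound for $\varphi_0$, the cancellation (vii), then H\"older on shells with $H(t)$, with the shell carrying the singular weight treated separately (the paper's $R^1_j/R^2_j$ split). You also note that the $L^2_\alpha$ bound for $T_{\sigma_k}$ must be uniform in $k$, which the paper skips; what that needs is $\mu_\alpha(G_k)\mu_{-\alpha}(G_k)\lesssim |G_k|^2$, i.e.\ Proposition \ref{promeasurewighted}(i) for $\pm\alpha$, not (iii).

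\textbf{Gap: matching the kernel increment with $H(t)$.} This step fails as you justify it. Your kernel $K_k(y^{-1}\smallstar x)$ is indeed what $\widehat f(\pi)=\int_G f(x)\pi^*(x)\,dx$ and $\mathrm{Tr}[\pi(x)\sigma(\pi)\widehat f(\pi)]$ give. Substituting $y=g_j\smallstar w$ and $u=g_j^{-1}\smallstar x$ then produces the \emph{left} increment $K_k(w^{-1}\smallstar u)-K_k(u)$ with $w\in G_{m(j)}$. But $H(t)$ controls $K_k(u\smallstar y^{-1})-K_k(u)$, and its supremum over $y\in G_l$ sits \emph{outside} the $u$-integral. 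Normality only gives $w^{-1}\smallstar u=u\smallstar(u^{-1}\smallstar w^{-1}\smallstar u)$, and this conjugate varies with $u$, so $H(t)$ cannot be inserted.

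The paper avoids the issue by writing $T_{\sigma_k}\varphi_j(x)=\int \mathcal{F}_G^{-1}[\sigma_k](x\smallstar y^{-1})\varphi_j(y)\,dy$ and using $I_j=G_{m(j)}\smallstar g_j$. Then $y=w\smallstar g_j$, $x=u\smallstar g_j$ give exactly the $H(t)$ increment over $u\in G\setminus G_{m(j)}$. If you keep your orientation, you must assume $H(t)$ with the translate on the left. Shells are symmetric and Haar measure is inversion invariant, and $\overline{K_k(u^{-1})}=\mathcal{F}_G^{-1}[\widehat{\epsilon}_{G_k}\sigma^*](u)$ with $\sigma^*(\pi):=\sigma(\pi)^*$. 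So the map $u\mapsto u^{-1}$ shows this left condition is exactly $H(t)$ for the adjoint symbol $\sigma^*$.

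\textbf{Smaller point: the singular shell.} Suppose $e\notin I_j$ and $|g_j|_{\mathcal G}=|G_{k(j)}|$. Then the shell $g_j\smallstar(G_{k(j)}\setminus G_{k(j)+1})$ contains $e$, so $\sup_{\mathrm{shell}}|x|^{\alpha}=+\infty$ for $\alpha<0$. Your bound $(\int_{\mathrm{shell}}|x|^{\alpha t'}dx)^{1/t'}\lesssim |G_n|^{1/t'}\sup_{\mathrm{shell}}|x|^{\alpha}$ is therefore false there. On that shell you need two facts:
\begin{itemize}
\item $(\int_{G_{k(j)}}|x|^{\alpha t'}dx)^{1/t'}\asymp |G_{k(j)}|^{\alpha+1/t'}$, from Proposition \ref{promeasurewighted}(i) with exponent $\alpha t'>-1$;
\item $|G_{k(j)}|^{\alpha}=|g_j|^\alpha=|y|^\alpha$ for every $y\in I_j$.
\end{itemize}
This is precisely the paper's estimate on $R^1_j$. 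On every other shell the weight is constant and at most $|y|^\alpha$. That includes the shells with $k(j)<n<m(j)$, where $|x|=|g_j|$, not only the outer ones. When $e\in I_j$, every shell has constant weight $|G_n|^\alpha\le |G_{m(j)}|^\alpha\le |y|^\alpha$. So Proposition \ref{promeasurewighted}(ii) is not actually needed for this bookkeeping.
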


\begin{proof}
Take any $\varphi \in L_{\alpha}^{1}(G)$. Fix $\gamma>0$ and apply Proposition 4.2 to get a Calderon-Zygmund decomposition for $\varphi$. Let us write
\[
\varphi=\varphi_{0}+\sum_{j \in \mathbb{N}} \varphi_{j}=\varphi_{0}+\psi.
\]
Then
\[
\{x \in G:|T_{\sigma_{k}} \varphi(x)|>\gamma\} \subset\{x \in G:|T_{\sigma_{k}} \varphi_{0}(x)|>\gamma / 2\} \cup\{|T_{\sigma_{k}} \psi(x)|>\gamma / 2\}:=E_{\gamma} \cup F_{\gamma}.
\]
For $E_{\gamma}$, using the boundedness of $T_\sigma$ on $L^2_\alpha (G)$, we have
\begin{align*}
\mu_{\alpha}\left(E_{\gamma}\right)&=\mu_{\alpha}\left(\left\{x \in G:2 \gamma^{-1}\left|T_{\sigma_{k}} \varphi_{0}(x)\right|>1\right\}\right)
\leq 4 \gamma^{-2}\left\|T_{\sigma_{k}} \varphi_{0}\right\|_{L_{\alpha}^{2}(G)}^{2} \\ & \lesssim 4 \gamma^{-2}\left\|\varphi_{0}\right\|_{L_{\alpha}^{2}(G)}^{2} \lesssim \gamma^{-1}\|\varphi_{0}\|_{L_{\alpha}^{1}(G)}.
\end{align*}

Next we observe that, using the set $D_\gamma$ defined in Proposition \ref{proCalderonZygmund},
\begin{align*}
\mu_{\alpha}\left(F_{\gamma}\right)&=\mu_{\alpha}\left(F_{\gamma} \cap D_{\gamma}\right)+\mu_{\alpha}\left(F_{\gamma} \backslash D_{\gamma}\right)
\\ &\leq \mu_{\alpha}\left(D_{\gamma}\right)+2 \gamma^{-1} \int_{G \backslash D_{\gamma}}\left|T_{\sigma_{k}} \psi(x)\right| |x|^{\alpha} d x
\leq \gamma^{-1}\|\varphi\|_{L_{\alpha}^{1}(G)}+2 \gamma^{-1} \int_{G \backslash D_{\gamma}}\left|T_{\sigma_{k}} \psi(x)\right||x|^{\alpha} d x,
\end{align*}
where in the above estimation we used the properties of the Calderon--Zygmund decomposition from Proposition \ref{proCalderonZygmund} and the hypothesis $2 \gamma^{-1} \left|T_{\sigma_{k}} \psi(x)\right|>1$.

Recall now the sequence $\{ g_j\}_{j \in \N}$ such that $I_j = g_j \smallstar G_{m(j)}$. For the previous integral we have:
\[
\begin{aligned}
\int_{G \backslash D_{\gamma}}|T_{\sigma_{k}} \psi(x)|&|x|^{\alpha} d x 
= \int_{G \backslash D_{\gamma}} \left|\sum_{j \in \mathbb{N}} \int_{I_{j}} \mathcal{F}_{G}^{-1} [\sigma_{k}](x \smallstar y^{-1}) \varphi_{j}(y) d y \right|\, |x|^{\alpha} d x \\
&= \int_{G \backslash D_{\gamma}} \left|\sum_{j \in \mathbb{N}} \int_{I_{j}} \left(\mathcal{F}_{G}^{-1}[\sigma_{k}](x \smallstar y^{-1})-\mathcal{F}_{G}^{-1}[\sigma_{k}](x  \smallstar g_j^{-1})\right)\varphi_{j}(y) d y \right|\, |x|^{\alpha} d x \\ &\leq  \int_{G \backslash D_{\gamma}} \sum_{j \in \mathbb{N}} \int_{I_{j}} \left| \mathcal{F}_{G}^{-1}[\sigma_{k}](x \smallstar y^{-1})-\mathcal{F}_{G}^{-1}[\sigma_{k}](x  \smallstar g_j^{-1})\right| | \varphi_{j}(y)|  d y \, |x|^{\alpha} d x \\ & = \sum_{j \in \mathbb{N}}   \int_{I_{j}} | \varphi_{j}(y)| \int_{G \backslash D_{\gamma}} \left| \mathcal{F}_{G}^{-1}[\sigma_{k}](x \smallstar y^{-1})-\mathcal{F}_{G}^{-1}[\sigma_{k}](x  \smallstar g_j^{-1} )\right|   |x|^{\alpha} d x dy \\ & \leq \sum_{j \in \mathbb{N}}   \int_{I_{j}} | \varphi_{j}(y)| \int_{G \backslash I_j}  \left| \mathcal{F}_{G}^{-1}[\sigma_{k}](x \smallstar y^{-1})-\mathcal{F}_{G}^{-1}[\sigma_{k}](x \smallstar g_j^{-1} )\right|   |x|^{\alpha} d x dy \\ & = \sum_{j \in \mathbb{N}}   \int_{G_{m(j)}} | \varphi_{j}(y \smallstar g_j)| \int_{G \backslash G_{m(j)}}  \left| \mathcal{F}_{G}^{-1}[\sigma_{k}](x \smallstar y^{-1})-\mathcal{F}_{G}^{-1}[\sigma_{k}](x )\right|   |x \smallstar g_j|^{\alpha} d x dy.
\end{aligned}
\]

Now we have to estimate the inside integral in the above expression, let us denote it by $K_{j,\alpha}$. If $\alpha=0$ and the condition $H(1)$ holds then
\[
\int_{G \backslash G_{m(j)}}\left|\mathcal{F}_{G}^{-1}[\sigma_{k}](x \smallstar y^{-1})-\mathcal{F}_{G}^{-1}[\sigma_{k}](x)\right| |x \smallstar g_{j}|^{\alpha} d x \lesssim 1,
\]
so we can conclude
\[
\int_{G \backslash D_{\gamma}}\left|T_{\sigma_{k}} \psi(x)\right| |x|^{\alpha} d x \lesssim \sum_{j \in \mathbb{N}} \int_{G_{m(j)}}\left|\varphi_{j}\left(y \smallstar g_{j}\right)\right| d y \lesssim \|\varphi\|_{L^{1}(G)}.
\]

If $\alpha<0$, let $k(j)$ be the natural number such that $|g_j|=|G/G_{k(j)}|^{-1}$. Let us define the sets $$R_j^1 := \{ x \in G \setminus G_{m(j)} \, : \, x \smallstar g_j \in G_{k(j) +1} \}, \quad R_j^2 := \{ x \in G \setminus G_{m(j)} \, : \, x \smallstar g_j \in G \setminus G_{k(j) +1} \}.$$First of all, for $x\in R_j^2$, we either have $$|x \smallstar g_{j}| = |x|\geq|g_j|, \quad \text{or} \quad |x \smallstar g_{j}|= |g_j|>|x|.$$In any case $|x \smallstar g_{j}|^\alpha \leq |g_j|^\alpha = |y \smallstar g_j|^\alpha$. For $R_j^2$, notice how $R_j^2 \subset G_{k(j)} \setminus G_{k(j)+1},$ therefore \begin{align*}
    \int_{R_j^2} & \left|  \mathcal{F}_{G}^{-1}[\sigma_{k}](x \smallstar y^{-1})-\mathcal{F}_{G}^{-1}[\sigma_{k}](x)\right|  |x \smallstar g_{j}|^{\alpha} d x  \leq  \\ & \Big( \int_{G \setminus G_{m(j)}}\left|\mathcal{F}_{G}^{-1}[\sigma_{k}](x \smallstar y^{-1})-\mathcal{F}_{G}^{-1}[\sigma_{k}](x)\right|^t d x\Big)^{1/t} \Big( \int_{G_{k(j)} \setminus G_{k(j)+1}} |x \smallstar g_{j}|^{t' \alpha} dx  \Big)^{1/t'} .
\end{align*}
Recall how, by Proposition \ref{promeasurewighted}, for $\alpha t' > -1$ $$\Big( \int_{G_{k(j)} \setminus G_{k(j)+1}} |x \smallstar g_{j}|^{t' \alpha} dx  \Big)^{1/t'} \lesssim  |G_{k(j)}|^{1/t'}\inf\{|y|^{\alpha}:y\in g_j \smallstar G^{*}_{k(j)}\} ,$$
and, assuming here the condition $H(t)$ holds, $t>1$, we have:
\begin{align*}
    \int_{R_j^1} \left|\mathcal{F}_{G}^{-1}[\sigma_{k}](x \smallstar y^{-1}) -\mathcal{F}_{G}^{-1}[\sigma_{k}](x) \right| &d x  \leq \int_{G \setminus G_{m(j)}} \left|\mathcal{F}_{G}^{-1}[\sigma_{k}](x \smallstar y^{-1})-\mathcal{F}_{G}^{-1}[\sigma_{k}](x)\right|d x \\ & \leq \sum_{k=0}^{m(j) -1 } \Big( \int_{G_k \setminus G_{k+1}} \left|\mathcal{F}_{G}^{-1}[\sigma_{k}](x \smallstar y^{-1})-\mathcal{F}_{G}^{-1}[\sigma_{k}](x)\right|^td x \Big)^{1/t} |G_k|^{1/t'} \\ &\lesssim \sum_{k=0}^{m(j) -1 } |G_k|^{- \varepsilon - \frac{1}{t'}} |G_{m(j)}|^{\varepsilon} |G_k|^{1/t'} \lesssim 1.
\end{align*}

In total we have:

\begin{align*}
K_{j,\alpha} &\lesssim |y \smallstar g_j|^\alpha \int_{R_j^1} \left|\mathcal{F}_{G}^{-1}[\sigma_{k}](x \smallstar y^{-1})-\mathcal{F}_{G}^{-1}[\sigma_{k}](x)\right|d x \\  &+ \left(\int_{G_{k(j)} \backslash G_{k(j)+1}} \left|\mathcal{F}_{G}^{-1}[\sigma_{k}](x \smallstar y^{-1})-\mathcal{F}_{G}^{-1}[\sigma_{k}](x)\right|^{t} d x \right) \left(\int_{G_{k(j)} \backslash G_{k(j)+1}}|x \smallstar g_{j}|^{\alpha t^{\prime}} d x\right)^{1 / t^{\prime}}  \\ &\lesssim |y \smallstar g_j|^\alpha + |G_{k(j)}|^{- \varepsilon - \frac{1}{t'}} |G_{m(j)}|^{\varepsilon} \inf\{|y|^{\alpha}:y\in g_j \smallstar G^{*}_{k(j)}\} \lesssim |y \smallstar g_j|^\alpha.
\end{align*}

Summing up:
\[
\int_{G \backslash D_{\gamma}}\left|T_{\sigma_{k}} \psi(x)\right| |x|^{\alpha} d x \lesssim \sum_{j \in \mathbb{N}} \int_{G_{m(j)}}\left|\varphi_{j}\left(y \smallstar g_{j}\right)\right| |y \smallstar g_j|^\alpha d y
\lesssim \|\varphi\|_{L_{\alpha}^{1}(G)}.
\]
This concludes the proof. 
\end{proof}

Along the last steps of the above proof, we tacitly established the following corollary.

\begin{coro}\label{coroalternativeestimate}
    The same conclusion holds if we replace condition $H(t)$ by
    \begin{equation}\label{eqaternativeLrproof}
        \sup\Big\{\left(\int_{G \setminus G_{l}} \left|\mathcal{F}_{G}^{-1}[ \sigma_{k}](x \smallstar y^{-1})-\mathcal{F}_{G}^{-1}[\sigma_{k}](x) \right|^{t} d x \right)^{1/t} \, : \, y \in G_l \Big\} \leq C .
    \end{equation}
\end{coro}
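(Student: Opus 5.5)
The plan is to re-run the proof of Lemma \ref{lemaweak11} verbatim and check that condition $H(t)$ was only used through the estimate \eqref{eqaternativeLrproof}. The proof splits $\varphi=\varphi_0+\psi$ via the Calderón--Zygmund decomposition of Proposition \ref{proCalderonZygmund}. The part $\mu_\alpha(E_\gamma)$ uses only the $(2,2)$ hypothesis on $L^2_\alpha(G)$ together with $|\varphi_0|\lesssim\gamma$, so it is untouched. For $F_\gamma$, the bound $\mu_\alpha(D_\gamma)\le\gamma^{-1}\|\varphi\|_{L^1_\alpha(G)}$ is likewise independent of the kernel. Everything therefore reduces to bounding, uniformly in $j$, $y\in G_{m(j)}$ and $k$, the inner integral
\[
K_{j,\alpha}=\int_{G\setminus G_{m(j)}}\left|\mathcal{F}_G^{-1}[\sigma_k](x\smallstar y^{-1})-\mathcal{F}_G^{-1}[\sigma_k](x)\right|\,|x\smallstar g_j|^{\alpha}\,dx
\lesssim |y\smallstar g_j|^{\alpha}.
\]

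I will keep the same splitting of $G\setminus G_{m(j)}$ into $R^1_j$ and $R^2_j$. On the region where $|x\smallstar g_j|^\alpha\le|g_j|^\alpha=|y\smallstar g_j|^\alpha$, I pull out the weight and bound the unweighted $L^1$ integral over $G\setminus G_{m(j)}$. In the original proof this was the step that summed the shell estimates of $H(t)$ over $k<m(j)$. Here I replace it by Hölder on the probability space $G$: the $L^1$ integral is at most the $L^t$ norm over $G\setminus G_l$ with $l=m(j)$, which \eqref{eqaternativeLrproof} bounds by $C$. On the complementary shell $G_{k(j)}\setminus G_{k(j)+1}$, I apply Hölder with exponents $t,t'$. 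The $L^t$ factor is again at most $C$ by \eqref{eqaternativeLrproof}, since that shell lies inside $G\setminus G_{m(j)}$. The weight factor $\big(\int|x\smallstar g_j|^{\alpha t'}dx\big)^{1/t'}$ is controlled by Proposition \ref{promeasurewighted}(ii), applied with $\alpha t'>-1$, which is exactly the restriction $-1/t'<\alpha\le0$.

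The case $t=1$ (and $\alpha=0$) is immediate, since \eqref{eqaternativeLrproof} is then literally the $H(1)$ bound. Summing over $j$ and using property (iii) of Proposition \ref{proCalderonZygmund} then gives $\int_{G\setminus D_\gamma}|T_{\sigma_k}\psi|\,|x|^\alpha dx\lesssim\|\varphi\|_{L^1_\alpha(G)}$, and hence the weak-type bound, uniformly in $k$.

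The only delicate point is the weight factor on the shell. One must check that $|G_{k(j)}|^{1/t'}\inf\{|z|^\alpha : z\in g_j\smallstar G^*_{k(j)}\}$ is $\lesssim|y\smallstar g_j|^\alpha$ without the decay factor $|G_{k(j)}|^{-\varepsilon-1/t'}|G_{m(j)}|^\varepsilon$ that $H(t)$ previously supplied. This holds because $|G_{k(j)}|^{1/t'}\le1$ and the infimum is comparable to $|g_j|^\alpha$ on that coset. If that comparison fails in some degenerate case, I would instead use Hölder over all of $G\setminus G_{m(j)}$ against $\mu_{\alpha t'}$, which is finite by Proposition \ref{promeasurewighted}(i).
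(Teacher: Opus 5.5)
Your proposal is correct and follows the paper's proof essentially step for step. The paper likewise goes back to the estimate for $K_{j,\alpha}$ from Lemma \ref{lemaweak11}: on the region where the weight is at most $|y\smallstar g_j|^\alpha$ it bounds the $L^1$ integral by the $L^t$ norm (H\"older on the probability space), and on the shell it applies H\"older with exponents $t,t'$ together with Proposition \ref{promeasurewighted}, absorbing $|G_{k(j)}|^{1/t'}\inf\{|z|^{\alpha}: z\in g_j\smallstar G^{*}_{k(j)}\}$ into $|y\smallstar g_j|^\alpha$ and controlling both $L^t$ norms by the supremum in \eqref{eqaternativeLrproof}; your check that this infimum equals $|g_j|^\alpha=|y\smallstar g_j|^\alpha$ and that $|G_{k(j)}|^{1/t'}\le 1$ is exactly the implicit justification of the paper's final inequality.
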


\begin{proof}
Following the notation of the previous proof, notice how we reached estimate
    \begin{align*}
K_{j,\alpha} &\lesssim |y \smallstar g_j|^\alpha \int_{R_j^1} \left|\mathcal{F}_{G}^{-1}[\sigma_{k}](x \smallstar y^{-1})-\mathcal{F}_{G}^{-1}[\sigma_{k}(x)]\right|d x \\  &+ \left(\int_{R_j^2} \left|\mathcal{F}_{G}^{-1}[\sigma_{k}](x \smallstar y^{-1})- \mathcal{F}_{G}^{-1} [\sigma_{k}](x)\right|^{t} d x \right)^{1/t} \left(\int_{G_{k(j)} \backslash G_{k(j)+1}}|x \smallstar g_{j}|^{\alpha t^{\prime}} d x\right)^{1 / t^{\prime}}  \\ &\lesssim |y \smallstar g_j|^\alpha \Big(  \int_{R_j^1} \left|\mathcal{F}_{G}^{-1}[\sigma_{k}](x \smallstar y^{-1})- \mathcal{F}_{G}^{-1}[ \sigma_{k}](x)\right|^t d x \Big)^{1/t} \\  &+ \left(\int_{R_j^2} \left|\mathcal{F}_{G}^{-1}[\sigma_{k}](x \smallstar y^{-1})- \mathcal{F}_{G}^{-1}[\sigma_{k}](x)\right|^{t} d x \right)^{1/t}|G_{k(j)}|^{1/t'} \inf\{|y|^{\alpha}:y\in g_j \smallstar G^{*}_{k(j)}\} \\ & \lesssim \left(\int_{G \setminus G_{m(j)}} \left|\mathcal{F}_{G}^{-1}[\sigma_{k}](x \smallstar y^{-1})- \mathcal{F}_{G}^{-1}[\sigma_{k}](x)\right|^{t} d x \right)^{1/t} |y \smallstar g_j|^\alpha \lesssim |y \smallstar g_j|^\alpha.
\end{align*}
This concludes the proof.
\end{proof}

\begin{rem}
    Notice how condition $H(t)$ implies the condition of the above corollary. 
\end{rem}
As a corollary of Lemma \ref{lemaweak11}, by using standard interpolation arguments, we obtain the following result:

\begin{teo}
Let $\sigma \in L^\infty_{\mathrm{op}}(G)$. Assume that $\sigma$ satisfies condition $H(t)$ for some $t \geq 1$, and that $T_\sigma$
is bounded on $L^2_{\alpha_0}(G)$ for some $-1/t' < \alpha_0 < 1/t'$. Then the Fourier multiplier $T_\sigma$ is bounded on
$L^r_\alpha(G)$ for $1 < r < \infty$ and $-|\alpha_0|\leq \alpha \leq (r-1) \cdot |\alpha_0|$. If the condition $H(1)$ holds then $T_\sigma$ is bounded
on $L^r(G)$ for $1 < r < \infty$.
\end{teo}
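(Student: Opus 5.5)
The proof combines Lemma \ref{lemaweak11}, weighted interpolation, and the duality of Proposition \ref{prointerpolationDual}, along the lines of the sketch (i)--(vi) at the start of this section.

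\textbf{Reduction to a nonpositive weight.} By Proposition \ref{prointerpolationDual} with $r=2$, we have $r'=2$ and $(1-r')\alpha_0=-\alpha_0$. So boundedness on $L^2_{\alpha_0}(G)$ is equivalent to boundedness on $L^2_{-\alpha_0}(G)$. Hence we may assume $T_\sigma$ is bounded on $L^2_{-|\alpha_0|}(G)$, where $-1/t'<-|\alpha_0|\le 0$.

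\textbf{The range $1<r\le 2$.} Lemma \ref{lemaweak11}, applied with $\alpha=-|\alpha_0|$, shows that $T_\sigma$ is of weak type $(1,1)$ on $L^1_{-|\alpha_0|}(G)$. The weighted Marcinkiewicz theorem (Theorem \ref{teoMarcinkiewicz}) then lets us interpolate between endpoints whose weights are powers $|x|_{\mathcal{G}}^{\beta_0}$ and $|x|_{\mathcal{G}}^{\beta_1}$; the interpolated weight is $|x|_{\mathcal{G}}^{(1-\theta)\beta_0+\theta\beta_1}$ with $1/r=1-\theta/2$. The admissible endpoint pairs are the weak $(1,1)$ estimate at $\beta_0\in[-|\alpha_0|,0]$ (Lemma \ref{lemaweak11} applies to every weight in that interval) and the $L^2$ estimate at $\beta_1=\pm|\alpha_0|$.

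For the weak endpoint to hold at an intermediate weight $\beta\in[-|\alpha_0|,0]$, Lemma \ref{lemaweak11} also needs the $L^2_\beta$ bound. I will obtain it from Stein--Weiss (Theorem \ref{teoSteinWeiss}) applied to $L^2_{-|\alpha_0|}$ and $L^2_{|\alpha_0|}$, which gives $L^2_\beta$ for all $|\beta|\le|\alpha_0|$.

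Interpolating, for $1<r<2$, the weak $(1,1)$ estimate at $-|\alpha_0|$ or at $0$ with the $L^2$ estimates at $\pm|\alpha_0|$ gives every exponent in $[-|\alpha_0|,(r-1)|\alpha_0|]$. The key point is that $\theta=2(1-1/r)=2(r-1)/r$, so taking $\beta_0=0$ and $\beta_1=|\alpha_0|$ gives $\theta|\alpha_0|$. This is at least $(r-1)|\alpha_0|$ when $r\le2$ (since $\theta\ge r-1$), so the upper endpoint is covered. A further Stein--Weiss interpolation within fixed $r$ then fills in the whole interval $[-|\alpha_0|,(r-1)|\alpha_0|]$.

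\textbf{The range $2<r<\infty$.} Proposition \ref{prointerpolationDual} maps $L^r_\alpha$ to $L^{r'}_{(1-r')\alpha}$. If $\alpha\in[-|\alpha_0|,(r-1)|\alpha_0|]$, then $(1-r')\alpha$ lies in $[-|\alpha_0|,(r'-1)|\alpha_0|]$, because $(1-r')(r-1)=-1$ and $(r'-1)=1/(r-1)$. Since $r'<2$, this is already covered by the previous range. Note that the condition $-1<\alpha<r-1$ required by Proposition \ref{prointerpolationDual} holds because $|\alpha_0|<1$.

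\textbf{The case $H(1)$.} Here $t'=\infty$, so we take $\alpha_0=0$, using only unweighted $L^2$ boundedness (which holds since $\sigma\in L^\infty_{op}$). Lemma \ref{lemaweak11} then gives weak $(1,1)$, Marcinkiewicz gives $1<r<2$, and duality gives $r>2$.

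\textbf{Expected main obstacle.} The main difficulty is the bookkeeping in the middle step. One has to check that the union of all interpolated weight exponents covers the full closed range $[-|\alpha_0|,(r-1)|\alpha_0|]$, including the endpoint $-|\alpha_0|$ for every $r$. I will try to get that endpoint by interpolating weak $(1,1)$ at $-|\alpha_0|$ with $L^2$ at $-|\alpha_0|$. A secondary obstacle is confirming that Lemma \ref{lemaweak11} applies uniformly to the truncations $\sigma_k$ at each intermediate weight.
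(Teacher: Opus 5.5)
The gap is the step that produces positive weights for $1<r<2$. There you interpolate the weak $(1,1)$ bound on $L^1_{\beta_0}$ against the $(2,2)$ bound on $L^2_{\beta_1}$ with $\beta_0\neq\beta_1$, and assign the weight exponent $(1-\theta)\beta_0+\theta\beta_1$, $\theta=2(r-1)/r$. That rule is wrong.

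Marcinkiewicz interpolation is a statement on one measure space: both endpoints are taken with respect to the same $\mu_\beta$. Under a change of weight, the general tool is Stein--Weiss for strong-type endpoints. There the interpolated weight is determined by $w^{1/r}=w_0^{(1-\theta)/r_0}w_1^{\theta/r_1}$. For $r_0=1$, $r_1=2$ this gives the exponent $(2-r)\beta_0+(r-1)\beta_1$. It agrees with $(1-\theta)\beta_0+\theta\beta_1$ only when $r_0=r_1$, which is why your final fixed-$r$ step is harmless.

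Your inequality $\theta\ge r-1$, which lets you overshoot to $\frac{2(r-1)}{r}|\alpha_0|>(r-1)|\alpha_0|$, is an artifact of the wrong rule, and the overshoot can genuinely fail. Take the multiplier $Tf=\int_G f\,dx$, whose symbol is supported at the trivial representation.
\begin{itemize}
\item It satisfies $H(t)$ for every $t$, since its truncated kernels are constant.
\item It is bounded on $L^1(G)$ and on $L^2_\beta(G)$ for all $|\beta|<1$.
\item It is bounded on $L^r_\gamma(G)$ only for $-1<\gamma<r-1$.
\end{itemize}
Your rule with $\beta_0=0$, $\beta_1=0.9$ (admissible with $t=20$) and $r=3/2$ would give boundedness on $L^{3/2}_{0.6}(G)$, which fails. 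So Theorem \ref{teoMarcinkiewicz}, read with $w_0\neq w_1$, cannot be used this way.

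The paper never interpolates a weak-type bound across different weights.
\begin{itemize}
\item In its Case 1 it applies Marcinkiewicz with the \emph{same} weight at both ends: weak $(1,1)$ on $L^1_\alpha$ from Lemma \ref{lemaweak11} and $(2,2)$ on $L^2_\alpha$, for $-|\alpha_0|\le\alpha\le 0$. This gives strong bounds on $L^r_\alpha$ for $1<r<2$, in particular on unweighted $L^{r_0}$.
\item Proposition \ref{prointerpolationDual} converts these into $r>2$, $0\le\alpha\le(r-1)|\alpha_0|$.
\item Positive weights for $1<r<2$ (its Case 3) come from strong-type Stein--Weiss between $L^{r_0}$ and $L^2_{\alpha_1}$, with $\alpha_1=\frac{(2-r_0)\alpha}{r-r_0}\le|\alpha_0|$, i.e.\ $\alpha=\frac{r-r_0}{2-r_0}\alpha_1$. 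This is exactly the correct rule above.
\item Duality then covers $r>2$, $\alpha\le 0$.
\end{itemize}
The rest of your plan is consistent with the paper: the reduction $L^2_{\alpha_0}\Leftrightarrow L^2_{-\alpha_0}$, the $L^2_\beta$ bounds for $|\beta|\le|\alpha_0|$, weak $(1,1)$ for $\beta\in[-|\alpha_0|,0]$, the duality bookkeeping, and the $H(1)$ case.

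On the endpoint you flagged: $\frac{r-r_0}{2-r_0}<r-1$ for every $r_0>1$. So interpolating strong-type bounds in this way never reaches $\alpha=(r-1)|\alpha_0|$ for $r<2$, nor, dually, $\alpha=-|\alpha_0|$ for $r>2$. The paper's admissible range for $r_0$ in its Case 3 is also empty at that endpoint.
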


\begin{proof}

\quad

\textbf{Case 1: $1<r<2$, $- |\alpha_0| \leq \alpha \leq 0$.}  For $\sigma \in
L^\infty_{\mathrm{op}}(\widehat{G})$ we have by hypothesis hat $T_\sigma$ is $L^2_{\alpha_0}$-bounded, so by Stein’s interpolation theorem and Proposition \ref{prointerpolationDual}, 
$T_\sigma$ is a multiplier on $L^2_\alpha(G)$ for $-|\alpha_0|\leq \alpha \leq |\alpha_0|$. If additionally $\sigma$ satisfies condition $H(t)$, then
by Lemma \ref{lemaweak11} $T_\sigma$ is of weak type $(1,1)$ on $L^1_\alpha(G)$ for $-|\alpha_0| \leq  \alpha \leq 0$. Therefore, by Stein–Weiss interpolation, $T_\sigma$ is bounded on $L^r_\alpha(G)$ for $-|\alpha_0| \leq  \alpha \leq 0$.

\textbf{Case 2: $2<r<\infty$, $0 \leq \alpha \leq (r-1) \cdot|\alpha_0|$.} Follows directly from Proposition \ref{prointerpolationDual} and the \textbf{Case 1}. The reason is that $1<r'<2$, so that $$T_\sigma \in \mathcal{L}(L^{r'}_{-\alpha}(G)) \iff T_\sigma \in \mathcal{L}(L^{r}_{(r-1)\alpha}(G)).$$

\textbf{Case 3: $1<r<2$, $0 \leq \alpha \leq (r-1) \cdot  |\alpha_0|$.} For the case $0 < \alpha \leq (r-1) \cdot |\alpha_0|$, choose $r_0$ such that 
\[
1 < r_0 < \frac{r|\alpha_0| - 2\alpha}{|\alpha_0| - |\alpha|} \leq r <2, \qquad 
\alpha_1 = \frac{2\alpha - r_0\alpha}{r-r_0} , \qquad 
\theta = \frac{2r - 2r_0}{2r - rr_0}.
\]If $\sigma$ satisfies condition $H(t)$, then in particular it satisfies condition $H(1)$, so we can deduce $T_\sigma \in \mathcal{L}(L^r (G))$ from \textbf{Case 1}. Also, since $0 <\frac{r}{2} \alpha_1< \alpha_1 \leq |\alpha_0|$, we know that $T_\sigma$ is bounded on $L^2_{( \frac{r}{2} \alpha_1)}(G)$, and since 
\[
\frac{1}{r} = \frac{1-\theta}{r_0} + \frac{\theta}{2}, \qquad 
\alpha = \frac{ r }{2}\alpha_1\theta,
\]
it follows from Theorem \ref{teoSteinWeiss} that $T_\sigma$ is bounded on $L^r_\alpha(G)$.

\textbf{Case 4: $2<r<\infty$, $-|\alpha_0| \leq \alpha \leq 0$.} Once again we use Proposition \ref{prointerpolationDual}. From \textbf{Case 3} we get $$T_\sigma \in \mathcal{L}(L^{r'}_{(1-r') \alpha }(G)) \iff T_\sigma \in \mathcal{L}(L^{r}_{ \alpha }(G)).$$ 

This concludes the proof. 
\end{proof}

\section{Multipliers on $L^r_\alpha (\G)$, $1<r<\infty$, $-d \leq \alpha \leq (r-1)d$.}
Now it is finally time to prove our results on compact $p$-adic Lie groups. For starters, let us point to the fact that the crucial estimate we need for the $L^r_\alpha$-boundedness, given in Corollary \ref{coroalternativeestimate}, follows easily from the equivalence
\begin{equation}\label{eqequimultiplier}
    \|\mathbf{x}\|_p^\alpha
 \asymp
\Tilde{I}_\alpha(\mathbf{x})
:=
\sum_{\eta \in \widehat{\G},\,
\|\eta\|_p \le \|\mathbf{y}\|_p^{-1}}
d_\eta 
\|\eta\|_p^{-(\alpha+d)}
\,
\|\pi_\eta(\mathbf{x})-\mathrm{I}_{d_\eta}\|_{HS}^2,
\qquad
\|\mathbf{x}\|_p > \|\mathbf{y}\|_p.
\end{equation}

Using this fact we intend now to prove how the hypothesis in Theorem \ref{TeoMultiplierLr} implies the necessary conditions to apply Theorem \ref{teoL2impliesLr}.
\begin{proof}[Proof of Theorem \ref{TeoMultiplierLr}]
We simply show how the hypothesis of Lemma \ref{lemaweak11} and Corollary \ref{coroalternativeestimate} hold. To simplify our notation along the proof, let us write
\[
\check{\sigma}_{k,\mathbf{y}}(\mathbf{x})
:=
\mathcal{F}^{-1}_{\G}[\sigma_k](\mathbf{x}\smallstar \mathbf{y}^{-1})
-
\mathcal{F}^{-1}_{\G}[\sigma_k](\mathbf{x}),
\] 

\textbf{Part 1 :}First, apply the Cauchy-Schwartz inequality:

\begin{align*}
    \int_{\G \setminus \G (p^\ell \Z_p)}|\check{\sigma}_{k,\mathbf{y}}(\mathbf{x})| d \mathbf{x} & \leq \Big(\int_{\G \setminus \G (p^\ell \Z_p)}|\check{\sigma}_{k,\mathbf{y}}(\mathbf{x})|^2 \| \mathbf{x}\|_p^{2 \alpha_0} d \mathbf{x} \Big)^{1/2} \Big( \int_{\G \setminus \G (p^\ell \Z_p)}\| \mathbf{x} \|_p^{-2 \alpha_0} d \mathbf{x} \Big)^{1/2}.
\end{align*}In one hand 
\begin{align*}
    \Big( \int_{\G \setminus \G (p^\ell \Z_p)}\| \mathbf{x} \|_p^{-2 \alpha_0} d \mathbf{x} \Big)^{1/2} = \Big( \sum_{k=0}^{ \ell -1} p^{-k(d- 2\alpha_0)}(1-p^{-d}) \Big)^{1/2} \lesssim p^{\ell(\alpha_0 - \frac{d}{2})} \leq \| \mathbf{y} \|_p^{ \frac{d}{2} - \alpha_0},
\end{align*}for any $\mathbf{y} \in \G (p^\ell \Z_p)$.  On the other hand, with the equivalence given in Equation \ref{eqequimultiplier} we obtain: 
\[
\int_{\G \setminus \G(p^\ell \mathbb{Z}_p)}
|\check{\sigma}_{k,\mathbf{y}}(\mathbf{x})|^2
\|\mathbf{x}\|_p^{2 \alpha_0 }
\,d\mathbf{x}
\lesssim
\int_{\G}
|\check{\sigma}_{k,\mathbf{y}}(\mathbf{x})|^2
\left(
\sum_{\|\eta\|_p \le \|\mathbf{y}\|_p^{-1}}
d_\eta 
\|\eta\|_p^{-(2 \alpha_0 +d)}
\|\pi_\eta(\mathbf{x})-\mathrm{I}_{d_\eta} \|_{HS}^2
\right)
\,d\mathbf{x}
\]
\[
=
\sum_{\|\xi\|_p > \|\mathbf{y}\|_p^{-1}}
\sum_{\|\eta\|_p \le \|\mathbf{y}\|_p^{-1}}
d_\xi d_\eta 
\|\eta\|_p^{-(2 \alpha_0 +d)}
\,
\|\Delta_\eta(\widehat{f}_{\mathbf{y}}\sigma_k)(\xi)\|_{HS}^2,
\]
where we used the notation
\[
\widehat{f}_{\mathbf{y}}(\xi)
:=
\pi_\xi(\mathbf{y}^{-1})-\mathrm{I}_{d_\xi},
\]
so that
\[
\Delta_\eta \widehat{f}_{\mathbf{y}}(\xi)
=
\pi_\eta\otimes\pi_\xi(\mathbf{y}^{-1})
-
\mathrm{I}_{d_\eta} \otimes\pi_\xi(\mathbf{y}^{-1})
=
0_{d_\eta\times d_\xi},
\qquad
\text{for } \|\eta\|_p \le \|\mathbf{y}\|_p^{-1}.
\]

By the product rule,
\[
\Delta_\eta(\widehat{f}_{\mathbf{y}}\sigma_k)(\xi)
=
\widehat{f}_{\mathbf{y}}(\eta\otimes\xi)\Delta_\eta\sigma_k(\xi)
+
\Delta_\eta\widehat{f}_{\mathbf{y}}(\xi)
(\mathrm{I}_{d_\eta}\otimes\sigma_k(\xi)).
\]
Since $\Delta_\eta\widehat{f}_{\mathbf{y}}(\xi)=0$ for
$\|\eta\|_p \le \|\mathbf{y}\|_p^{-1}$, we obtain
\[
\Delta_\eta(\widehat{f}_{\mathbf{y}}\sigma_k)(\xi)
=
\widehat{f}_{\mathbf{y}}(\eta\otimes\xi)\Delta_\eta\sigma_k(\xi),
\]
and therefore
\[
\|\Delta_\eta(\widehat{f}_{\mathbf{y}}\sigma_k)(\xi)\|_{HS}
\le
2\|\Delta_\eta\sigma_k(\xi)\|_{HS}.
\]

Summing up, for $\mathbf{y} \in \G(p^\ell \Z_p )$
\begin{align*}    
\int_{\G \setminus \G(p^\ell \mathbb{Z}_p)}
|\check{\sigma}_{k,\mathbf{y}}(\mathbf{x})|^2
\|\mathbf{x}\|_p^{2 \alpha_0}
\,d\mathbf{x}
&\lesssim
\sum_{\|\xi\|_p > \|\mathbf{y}\|_p^{-1}}
\sum_{\|\eta\|_p \le \|\mathbf{y}\|_p^{-1}}
d_\xi d_\eta 
\|\eta\|_p^{-(2\alpha_0+d)}
\|\Delta_\eta\sigma_k(\xi)\|_{HS}^2
\\&\leq  \sum_{\|\xi\|_p > \|\mathbf{y}\|_p^{-1}}
\sum_{\|\eta\|_p \le \|\mathbf{y}\|_p^{-1}}
d_\xi^2 d_\eta^2 
\|\Delta_\eta^{\alpha_0 + \frac{d}{2}}\sigma_k(\xi)\|_{op}^2 \\ & \lesssim \sum_{\|\xi\|_p > \|\mathbf{y}\|_p^{-1}}
\sum_{\|\eta\|_p \le \|\mathbf{y}\|_p^{-1}}
d_\xi^2 d_\eta^2 
\|\xi \|_p^{-2\alpha_0 -d} \\ &\lesssim \|\mathbf{y}\|_p^{-d} \sum_{\|\xi\|_p > \|\mathbf{y}\|_p^{-1}} d_\xi^2  
\|\xi \|_p^{-2\alpha_0 -d}  \lesssim \|\mathbf{y}\|_p^{2 \alpha_0 -d} .
\end{align*}
This concludes the proof.

\textbf{Part 2:} Assume $1<t<2$. First, apply the H{\"o}lder inequality:

\begin{align*}
    \Big( \int_{\G \setminus \G (p^\ell \Z_p)}|\check{\sigma}_{k,\mathbf{y}}(\mathbf{x})|^t d \mathbf{x} \Big)^{1/t} & \leq \Big(\int_{\G \setminus \G (p^\ell \Z_p)}|\check{\sigma}_{k,\mathbf{y}}(\mathbf{x})|^2 \| \mathbf{x}\|_p^{2 \alpha_0} d \mathbf{x} \Big)^{1/2} \Big( \int_{\G \setminus \G (p^\ell \Z_p)}\| \mathbf{x} \|_p^{-\frac{2t \alpha_0}{2-t}} d \mathbf{x} \Big)^{\frac{2-t}{2t}}.
\end{align*}In one hand 
\begin{align*}
    \Big( \int_{\G \setminus \G (p^\ell \Z_p)}\| \mathbf{x} \|_p^{-\frac{2t \alpha_0}{2-t}} d \mathbf{x} \Big)^{\frac{2-t}{2t}} = \Big( \sum_{k=0}^{ \ell -1} p^{-k(d- \frac{2t \alpha_0}{2-t})}(1-p^{-d}) \Big)^{\frac{2-t}{2t}} \lesssim p^{\ell(\alpha_0 - d(\frac{1}{t}- \frac{1}{2}))} \leq \| \mathbf{y} \|_p^{ d(\frac{1}{t}- \frac{1}{2}) - \alpha_0},
\end{align*}for any $\mathbf{y} \in \G (p^\ell \Z_p)$.  On the other hand, with the equivalence given in Equation \ref{eqequimultiplier} we obtain again: 
\[
\int_{\G \setminus \G(p^\ell \mathbb{Z}_p)}
|\check{\sigma}_{k,\mathbf{y}}(\mathbf{x})|^2
\|\mathbf{x}\|_p^{2 \alpha_0 }
\,d\mathbf{x}
\lesssim
\int_{\G}
|\check{\sigma}_{k,\mathbf{y}}(\mathbf{x})|^2
\left(
\sum_{\|\eta\|_p \le \|\mathbf{y}\|_p^{-1}}
d_\eta 
\|\eta\|_p^{-(2 \alpha_0 +d)}
\|\pi_\eta(\mathbf{x})-\mathrm{I}_{d_\eta} \|_{HS}^2
\right)
\,d\mathbf{x}
\]
\[
=
\sum_{\|\xi\|_p > \|\mathbf{y}\|_p^{-1}}
\sum_{\|\eta\|_p \le \|\mathbf{y}\|_p^{-1}}
d_\xi d_\eta 
\|\eta\|_p^{-(2 \alpha_0 +d)}
\,
\|\Delta_\eta(\widehat{f}_{\mathbf{y}}\sigma_k)(\xi)\|_{HS}^2,
\]
so for $\mathbf{y} \in \G(p^\ell \Z_p )$, let us say $\| \mathbf{y} \|_p= p^{-\vartheta(\mathbf{y})}$
\begin{align*}    
\int_{\G \setminus \G(p^\ell \mathbb{Z}_p)}
|\check{\sigma}_{k,\mathbf{y}}(\mathbf{x})|^2
\|\mathbf{x}\|_p^{2 \alpha_0}
\,d\mathbf{x}
&\lesssim
\sum_{\|\xi\|_p > \|\mathbf{y}\|_p^{-1}}
\sum_{\|\eta\|_p \le \|\mathbf{y}\|_p^{-1}}
d_\xi d_\eta 
\|\eta\|_p^{-(2\alpha_0+d)}
\|\Delta_\eta\sigma_k(\xi)\|_{HS}^2
\\&\leq  \sum_{\|\xi\|_p > \|\mathbf{y}\|_p^{-1}}
\sum_{\|\eta\|_p \le \|\mathbf{y}\|_p^{-1}}
d_\xi^2 d_\eta^2 
\|\Delta_\eta^{\alpha_0 + \frac{d}{2}}\sigma_k(\xi)\|_{op}^2 \\ & \lesssim \sum_{\|\xi\|_p > \|\mathbf{y}\|_p^{-1}}
\sum_{\|\eta\|_p \le \|\mathbf{y}\|_p^{-1}}
d_\xi^2 d_\eta^2 \|\xi\|_p^{-2\alpha_0-2d(
    \frac{3}{2}-\frac{1}{t})} \\ &\lesssim \|\mathbf{y}\|_p^{-d} \sum_{\|\xi\|_p > \|\mathbf{y}\|_p^{-1}} d_\xi^2  
\|\xi \|_p^{-2\alpha_0-2d(
    \frac{3}{2}-\frac{1}{t})} \\ &= \|\mathbf{y}\|_p^{-d}  \sum_{k= \vartheta(\mathbf{y})+1}^{\infty}  p^{k(-2\alpha_0-2d(
    \frac{3}{2}-\frac{1}{t}))} \sum_{\| \xi \|_p = p^k} 
    d_\xi^2 \\ & \leq \|\mathbf{y}\|_p^{-d}  \sum_{k= \vartheta(\mathbf{y})+1}^{\infty}  p^{k(d -2\alpha_0-2d(
    \frac{3}{2}-\frac{1}{t}))} \\ & = \|\mathbf{y}\|_p^{-d}  \sum_{k= \vartheta(\mathbf{y})+1}^{\infty}  p^{k(-2\alpha_0-2d(
    1-\frac{1}{t}))} \lesssim  \|\mathbf{y}\|_p^{2 \alpha_0 +2d(\frac{1}{2}-\frac{1}{t})}  = \|\mathbf{y}\|_p^{2 \alpha_0 -2d(\frac{1}{t} - \frac{1}{2})} .
\end{align*}
This concludes the proof.\end{proof}

We conclude this section with the proof of Theorem \ref{teolittlewood}. We write in the context of compact $p$-adic Lie groups, but the arguments works the same for general compact Vilenkin groups.

\begin{rem}
By a standard duality argument, see for instance \cite[p.p.\ 105, 5.3.1]{Stein1971}, it is only 
necessary to prove
\[
\|Sf\|_{L^r_{\alpha}(G)} \lesssim \|f\|_{L^r_{\alpha}(G)}.
\]
\end{rem}

\begin{proof}[Proof of Theorem \ref{teolittlewood}.]
The proof is just an application of the properties of the Rademacher
functions, more precisely Khintchine inequality. For $n \in \mathbb{N}_0$ denote by
\[
r_n(s) = \operatorname{sgn}(\sin(2^n \pi s))
\]
the $n$-th Rademacher function. Let us define the Fourier multiplier $T_{\sigma_s}$ via the symbol
\[
\sigma_s(\xi)
:=
\sum_{n \in \mathbb{N}_0}
r_n(s)\,\delta_{\widehat{\G}(n)}(\xi)\,\mathrm{I}_{d_\xi}
=
\sum_{n \in \mathbb{N}_0}
r_n(s)\big(\widehat{\epsilon}_{\G_{n+1}}(\xi)
-
\widehat{\epsilon}_{\G_n}(\xi)\big).
\]

First we claim that $\sigma_t$ satisfies the hypothesis of Theorem \ref{teomultL2v1}.
To see this just compute $\Delta_\eta \sigma_t$:
\[
\Delta_\eta \sigma_s(\xi)
=
\sum_{n \in \mathbb{N}_0}
r_n(s)\big(
\Delta_\eta \widehat{\epsilon}_{\G_{n+1}}(\xi)
-
\Delta_\eta \widehat{\epsilon}_{\G_n}(\xi)
\big),
\]
where
\[
\Delta_\eta \widehat{\epsilon}_{\G_n}(\xi)
=
\widehat{\epsilon}_{\G_n}(\eta \otimes \xi)
-
\widehat{\epsilon}_{\G_n}(\mathrm{I}_{d_\eta} \otimes \xi)
=
\widehat{\epsilon}_{\G_n}(\eta \otimes \xi)
-
\mathrm{I}_{d_\eta} \otimes
\widehat{\epsilon}_{\G_n}(\xi).
\]

Since
\[
\widehat{\epsilon}_{\G_n}(\eta \otimes \xi)
=
\int_{\G_n}
\frac{1}{|\G_n|}
\,\eta(x) \otimes \xi(x)\,dx
=
\mathrm{I}_{d_\eta} \otimes
\delta_{\G_n^{\bot}}(\xi)\,\mathrm{I}_{d_\xi},
\quad \text{for } \| \eta \|_p < \| \xi \|_p,
\]
we see that $\Delta_\eta \widehat{\epsilon_{\G_n}}(\xi)$ is identically
zero. Consequently, by Theorem \ref{teomultL2v1} and Lemma \ref{prointerpolationDual}, $T_{\sigma_s}$ is bounded on
$L^2_\alpha(\G)$ for $-d < \alpha < d.$ Now we claim that $\sigma_s$ satisfies the hypothesis of Theorem \ref{teoL2impliesLr}.
Indeed, we write
\[
\mathcal{F}^{-1}_{\G}[\sigma_s](\mathbf{x} \smallstar \mathbf{y}^{-1})
-
\mathcal{F}^{-1}_{\G}[\sigma_s](\mathbf{x})
=
\sum_{n \in \mathbb{N}_0}
r_n(s)
\big(
\epsilon_{\G_{n+1}}(\mathbf{x} \smallstar \mathbf{y}^{-1})
-
\epsilon_{\G_{n+1}}(\mathbf{x})
\big)
+
\sum_{n \in \mathbb{N}_0}
r_n(s)
\big(
\epsilon_{\G_n}(\mathbf{x} \smallstar \mathbf{y}^{-1})
-
\epsilon_{\G_n}(\mathbf{x})
\big).
\]

Since $\|\mathbf{x} \smallstar \mathbf{y}^{-1}\|_p = \|\mathbf{x}\|_p$ whenever
$\|\mathbf{x}\|_p > \|\mathbf{y}\|_p$, we obtain in any case
\[
\mathcal{F}^{-1}_{\G}[\sigma_s](\mathbf{x} \smallstar \mathbf{y}^{-1})
-
\mathcal{F}^{-1}_{\G}[\sigma_s](\mathbf{x})
=
0,
\quad
\text{for } x \in \G \setminus \G_\ell,\; y \in \G_\ell:= \G \cap (\mathrm{I}_m + \mathcal{M}(p^\ell \Z_p)).
\]
Hence $\sigma_s$ satisfies condition $H(t)$ for $1 \le t < \infty$.
Thus, by Theorem \ref{teoL2impliesLr}, there exists a constant $C_r$ depending only on
$r$ such that
\[
\|T_{\sigma_s} f\|_{L^r_\alpha(\G)}^r
\le
C_r
\|f\|_{L^r_\alpha(\G)}^r,
\qquad
f \in \mathcal{D}(\G).
\]

Integrating the left-hand side with respect to $s \in [0,1]$ we obtain
\[
\|Sf\|_{L^r_\alpha(\G)}
\lesssim
\|f\|_{L^r_\alpha(\G)},
\]
where
\[
Sf(\mathbf{x})
=
\left(
\sum_{n \in \mathbb{N}_0}
\left|
\sum_{\|\pi\|_p = p^n}
d_\xi 
\operatorname{Tr}
\big(
\pi(\mathbf{x})\widehat{f}(\pi)
\big)
\right|^2
\right)^{1/2}.
\]

More generally, for compact Vilenkin groups we would write \[
Sf(x)
=
\left(
\sum_{n \in \mathbb{N}_0}
\left|
\sum_{[\pi] \in \widehat{G}(n)}
d_\pi
\operatorname{Tr}
\big(
\pi(x)\widehat{f}(\pi)
\big)
\right|^2
\right)^{1/2}.
\]
This concludes the proof.
\end{proof}

\section{Final remarks}

\subsection{Functions of the VT operator}

Let us start by defining radial functions on the unitary dual.

\begin{defi}\normalfont
A symbol \(\sigma : \widehat{\G} \to \bigcup_{[\pi] \in \widehat{\G}} \mathcal{L}(\mathcal{H}_\pi)\) is said to be \emph{radial} if there exists a function
\[
\varphi : \{|\G / \G_n|^{1/d} : n \in \mathbb{N}_0\} \to \mathbb{C}
\]
such that
\[
\sigma(\pi) = \varphi\big(\|\pi\|_p\big) \mathrm{I}_{d_\pi}, \quad \text{for all } \pi \in \widehat{G}.
\]

Equivalently, a radial function is constant on the ultrametric spheres
\[\widehat{\G}(n):=
\{\pi \in \widehat{\G} : \|\pi\|_p = |\G / \G_n|^{1/d}\}, \quad n \in \mathbb{N}_0.
\]
\end{defi}

The Littlewood-Paley decomposition in Theorem \ref{teolittlewood} implies that bounded radial mappings in the unitary dual define $L^r_\alpha$-bounded operators. To be more precise, remember how a pseudo-differential operator $T_\sigma$ on $\G$ is a linear operator with the form $$T_\sigma f(\mathbf{x}) := \sum_{n \in \N_0 } \sum_{ [ \pi] \in \widehat{\G}, \, \| \pi \|_p = |\G / \G_n|^{1/d} } d_\pi \, \mathrm{Tr}[\pi(\mathbf{x}) \sigma(\mathbf{x}, \xi) \widehat{f}(\pi)], \quad f \in L^2(\mathbb{G}),$$where the mapping $$\sigma: \G \times \widehat{\G} \to  \bigcup_{[\pi] \in \widehat{\G}} \mathcal{L}(\mathcal{H}_\pi), \quad (\mathbf{x}, \pi) \mapsto \sigma(\mathbf{x}, \pi) \in \mathcal{L}(\mathcal{H}_\pi),$$is called the symbol of the operator. If the symbol is radial, that is $$\sigma(\mathbf{x}, \pi) = \sigma(\mathbf{x}, \| \pi \|_p) \mathrm{I}_{d_\pi},$$then clearly \begin{align*}
    \| T_\sigma f \|_{L^r_\alpha(\G)} &\lesssim \| S(T_\sigma f)\|_{L^r_\alpha(\G)}
= \left( \int_{\G}
\left(
\sum_{n \in \mathbb{N}_0}
\left|
\sum_{\|\pi\|_p = p^n}
d_\xi 
\operatorname{Tr}
\big(
\pi(\mathbf{x})\sigma(\mathbf{x}, \| \pi \|_p) \widehat{f}(\pi)
\big)
\right|^2
\right)^{r/2} \| \mathbf{x}\|_p^\alpha d \mathbf{x} \right)^{1/r}  \\ &= \left( \int_{\G}
\left(
\sum_{n \in \mathbb{N}_0}|\sigma(\mathbf{x}, \| \pi \|_p) |^2
\left|
\sum_{\|\pi\|_p = p^n}
d_\xi 
\operatorname{Tr}
\big(
\pi(\mathbf{x})\widehat{f}(\pi)
\big)
\right|^2
\right)^{r/2} \| \mathbf{x}\|_p^\alpha d \mathbf{x} \right)^{1/r}\\ & \leq \sup_{(\mathbf{x},\pi) \in \G \times \widehat{\G}}  |\sigma(\mathbf{x}, \| \pi \|_p) | \| S f \|_{L^r_\alpha (\G)} ,
\end{align*}
proving that $$\sup_{(\mathbf{x},\pi) \in \G \times \widehat{\G}}  |\sigma(\mathbf{x}, \| \pi \|_p) | < \infty \implies T_\sigma \in \mathcal{L}( L^r_\alpha(\G)).$$ 
In particular, the Vladimirov-Taibleson operator from Definition \ref{defivladimirov} has a radial symbol, so for any function $G$ bounded on $[0, \infty)$ the operator $G(D^\alpha)$ defines a bounded operator on $L^r_\alpha (\G)$. In particular this is true for the inverse operators $\mathbb{D}^{-\alpha}$, proving how the negative powers of the VT operator are $L^r_\alpha$-bounded.

\subsection{Spaces adapted to the Vladimirov--Taibleson sub-Laplacian} For the rest of this section, let $\G$ be a compact nilpotent $p$-adic Lie group, with nilpotency class $\mathscr{N}(\G) <p $. We want to study now the weighted spaces associated to the weight $$w_\alpha (\mathbf{x}) = \| \mathbf{x} \|_{p, sub}^\alpha : = \|(x_1,...,x_\kappa) \|_p^\alpha,$$where we are choosing a basis $\{X_1,...,X_\kappa,...,X_d \}$ for the $\Z_p$-Lie algebra $\g$, and $X_1,...,X_\kappa$ is a basis for $\g/[\g,\g]$. For this weight function, we have the estimate $$\| \mathbf{x} \|_{p,sub}^\alpha \asymp \sum_{\{ \eta \in \widehat{\Z}_p^d \, : \, \eta_{\kappa+1} = ...= \eta_d = 1 \}} |e^{2 \pi i \{ \mathbf{x} \cdot \eta \}_p}-1|^2 \| \eta \|_p^{-(\alpha + \kappa)},\quad \kappa:= \mathrm{dim}_{\Q_p} (\g/[\g , \g]),$$
so we also have the following equivalence of norms $$\| f \|_{L^2_{\alpha, sub}(\G)}:= \Big(\int_{\G} |f(\mathbf{x})|^2 \| \mathbf{x} \|^\alpha_{p,sub} d \mathbf{x}\Big)^{1/2} \asymp \Big( \sum_{\xi \in \widehat{\G}} \sum_{\{ \eta \in \widehat{\Z}_p^d \, : \, \eta_{\kappa+1} = ...= \eta_d = 1 \}} d_\xi \| \eta \|_p^{-(\alpha + \kappa)}\| \Delta_\eta \widehat{f}(\xi) \|_{HS}^2 \Big)^{1/2}.$$
With similar techniques to the ones used for our main results, we can prove the following theorem: 

\begin{teo}\label{teosublaplacianspaces}
Let $\sigma \in L^\infty (\widehat{\G})$ be a symbol, al let $1 <t <2$ be a real number.
\begin{enumerate}
    \item Assume there is a $C = C(\sigma,\alpha) > 0$ such that for $\|\eta\|_p < \|\xi\|_p$,
    \[
    \left\| \Delta_{\eta}^{\frac{\alpha + \kappa}{2}} \sigma(\xi) \right\|_{\mathrm{op}} \leq C_2 \|\xi\|_p^\frac{-(\alpha + \kappa)}{2}, \quad \forall \xi \in \widehat{\mathbb{G}}.
    \] Then $T_\sigma$ extends to a bounded operator on $L^2_{\alpha, sub}(\mathbb{G})$.
    \item If there is a certain $\alpha_0> \frac{d}{2}$ and a certain constant  $C = C(\sigma,\alpha_0) > 0$ such that for $\|\eta\|_p < \|\xi\|_p$,
    \[
    \left\| \Delta_{\eta}^{\alpha_0 +\frac{ \kappa}{2}} \sigma(\xi) \right\|_{\mathrm{op}} \leq C_2 \|\xi\|_p^{-(\alpha_0+\frac{ d}{2})}, \quad \forall \xi \in \widehat{\mathbb{G}},
    \]Then $T_\sigma$ extends to a bounded operator on $L^r(\G)$ with $1 <r<\infty$.
    \item If there is a certain $\alpha_0> d \big( \frac{1}{t} - \frac{1}{2} \big)$ and a certain constant  $C = C(\sigma,\alpha_0,t) > 0$ such that for $\|\eta\|_p < \|\xi\|_p$,
    \[
    \left\| \Delta_{\eta}^{\alpha_0 +\frac{ d}{2}} \sigma(\xi) \right\|_{\mathrm{op}} \leq C_2 \|\xi\|_p^{-(\alpha_0+\frac{d}{2} + \kappa (
    \frac{1}{t}+\frac{1}{2}))}, \quad \forall \xi \in \widehat{\mathbb{G}},
    \]Then $T_\sigma$ extends to a bounded operator on $L^r_{\alpha, sub} (\G)$ with $1 <r<\infty$ and any $$ -d \leq \alpha \leq (r-1) d.$$
\end{enumerate}
\end{teo}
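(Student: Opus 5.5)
The plan is to run the same three-stage argument used for Theorems \ref{teomultL2v1}, \ref{teoL2impliesLr} and \ref{TeoMultiplierLr}. The full Haar-weight $\|\mathbf{x}\|_p$ is replaced by the horizontal weight $\|\mathbf{x}\|_{p,sub}$, and the sum over $\widehat{\G}$ in the weight identity is replaced by the sum over the horizontal characters $\eta\in\widehat{\Z}_p^d$ with $\eta_{\kappa+1}=\dots=\eta_d=1$. These characters factor through the abelianization $\G/[\G,\G]\cong\Z_p^\kappa$. They are therefore one-dimensional representations of $\G$, so $\Delta_\eta$ acts on them without changing matrix sizes, and $d_\eta=1$.

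\textbf{Step 1 (part (1)).} First I will establish the pointwise equivalence
\[
\|\mathbf{x}\|_{p,sub}^\alpha\asymp\sum_{\eta}|e^{2\pi i\{\mathbf{x}\cdot\eta\}_p}-1|^2\|\eta\|_p^{-(\alpha+\kappa)} .
\]
This is exactly Lemma \ref{lemma3.1}(i) on the abelian group $\Z_p^\kappa$, evaluated at the projection $(x_1,\dots,x_\kappa)$. For the lower bound I use Proposition \ref{pronormAbelian} in place of the lower bound property. Then I multiply by $|f|^2$ and apply Plancherel exactly as in Lemma \ref{lemma3.1}(ii) to obtain the stated equivalent norm on $L^2_{\alpha,sub}(\G)$.

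Next I apply the product rule for $\Delta_\eta$ as in the proof of Theorem \ref{teomultL2v1}. The first term is controlled by $\|\sigma\|_{L^\infty_{op}}^2\|f\|_{L^2_{\alpha,sub}}^2$. For the second term I split into $\|\eta\|_p<\|\xi\|_p$ and $\|\eta\|_p\ge\|\xi\|_p$. The hypothesis gives $\sum_{\|\eta\|_p<\|\xi\|_p}\|\eta\|_p^{-(\alpha+\kappa)}\|\Delta_\eta\sigma(\xi)\|_{op}^2\lesssim \|\xi\|_p^{-(\alpha+\kappa)}\#\{\eta:\|\eta\|_p<\|\xi\|_p\}\lesssim\|\xi\|_p^{-\alpha}$, since only $\asymp\|\xi\|_p^\kappa$ horizontal $\eta$ lie in that ball. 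This reduces the bound to $\|f\|_{H^{-\alpha/2}_2}$. It remains to compare this with $\|f\|_{L^2_{\alpha,sub}}$, the analogue of Lemma \ref{lemma3.1}(iii), which I expect to be the main obstacle. The weight now vanishes on the whole center-type subgroup $\{x_1=\dots=x_\kappa=0\}$, not just at $e$, so the computation with $\int_{\G_n}\|\mathbf{x}\|^\alpha$ must be redone on cosets of $\G_n$ intersected with that subgroup. I would first try to prove the weaker inequality with the horizontal Sobolev scale $\|\xi\|_{p,sub}$ (the norm of the first $\kappa$ coordinates of $\xi$), and accept an exponent loss if necessary, since $\|\xi\|_{p,sub}\le\|\xi\|_p$ only helps in the $\|\eta\|_p\ge\|\xi\|_p$ tail.

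\textbf{Steps 2–3 (parts (2),(3)).} Here I verify the kernel condition of Corollary \ref{coroalternativeestimate} as in the proof of Theorem \ref{TeoMultiplierLr}. I apply Hölder with the weight $\|\mathbf{x}\|_{p,sub}^{2\alpha_0}$ on $\G\setminus\G(p^\ell\Z_p)$. Then I replace that weight by the truncated horizontal sum over $\|\eta\|_p\le\|\mathbf{y}\|_p^{-1}$, the analogue of (\ref{eqequimultiplier}), valid for $\|\mathbf{x}\|_{p}>\|\mathbf{y}\|_p$. Using Plancherel and the vanishing $\Delta_\eta\widehat f_{\mathbf{y}}=0$, this reduces to $\sum_\xi\sum_\eta d_\xi^2\|\Delta_\eta^{\alpha_0+\kappa/2}\sigma_k(\xi)\|_{op}^2$. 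The hypothesis bound, together with $\#\{\eta\}\lesssim\|\mathbf{y}\|_p^{-\kappa}$ and $\sum_{\|\xi\|_p=p^k}d_\xi^2\lesssim p^{kd}$, makes the geometric series converge precisely under the stated decay exponents. That yields a uniform bound and hence condition (\ref{eqaternativeLrproof}). Finally I conclude with Lemma \ref{lemaweak11} and the interpolation Theorem \ref{teoL2impliesLr}, whose proof uses only the Calderón–Zygmund structure and Propositions \ref{promeasurewighted}–\ref{prointerpolationDual}. One point must be checked: Proposition \ref{promeasurewighted} has to hold for the measure $\|\mathbf{x}\|_{p,sub}^\alpha d\mathbf{x}$. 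I expect this to follow from the direct computation $\mu(\G_k)\asymp|\G_k|\,p^{-k\alpha}$, which is doubling, for $\alpha>-\kappa$.
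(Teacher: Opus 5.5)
Your plan follows the paper's own proof of this theorem essentially step by step, and it breaks at the same place, in Steps 2--3.

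The truncated horizontal equivalence does not hold for $\|\mathbf{x}\|_p>\|\mathbf{y}\|_p$. It holds only for $\|x'\|_p>\|\mathbf{y}\|_p$, where $x'=(x_1,\dots,x_\kappa)$. Horizontal characters see only $x'$. So on the set of $\mathbf{x}\in\G\setminus\G(p^\ell\Z_p)$ with $\|x'\|_p\le\|\mathbf{y}\|_p$, which has positive measure for $\ell\ge1$, every term of the sum over horizontal $\eta$ with $\|\eta\|_p\le\|\mathbf{y}\|_p^{-1}$ vanishes, while $\|\mathbf{x}\|_{p,sub}^{2\alpha_0}$ does not. Worse, the region where $x'\in p^\ell\Z_p^\kappa$ but some non-horizontal coordinate lies outside $p^\ell\Z_p$ is contained in $\G\setminus\G(p^\ell\Z_p)$. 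On it your dual H\"older factor contains $\int_{p^\ell\Z_p^\kappa}\|x'\|_p^{-2\alpha_0}\,dx'$ (resp. with exponent $2t\alpha_0/(2-t)$). This integral diverges, because $\alpha_0>d/2$ (resp. $\alpha_0>d(\frac1t-\frac12)$) and $d>\kappa$. The paper passes over this with the bound $\int_{\G\setminus\G(p^\ell\Z_p)}\|\mathbf{x}\|_{p,sub}^{-2\alpha_0}\,d\mathbf{x}\le\int_{(\Z_p^\kappa\setminus p^\ell\Z_p^\kappa)\times\Z_p^{d-\kappa}}\|\mathbf{x}\|_{p,sub}^{-2\alpha_0}\,d\mathbf{x}$, whose left side is in fact infinite.

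This cannot be patched, because horizontal differences cannot detect non-horizontal frequencies. On $\mathbb{H}_1$ put $\sigma(\pi_\xi):=m(\xi_3)\mathrm{I}_{d_\xi}$, with $m\in\ell^\infty(\widehat{\Z}_p)$ not an $L^r(\Z_p)$-multiplier; such $m$ exist for every $r\neq2$. For horizontal $\eta$, $\eta\otimes\pi_\xi\cong\pi_{\xi+\eta}$ has the same central parameter $\xi_3$. Hence $\Delta_\eta\sigma\equiv0$, and the hypotheses of (2) and (3) hold with any constant. Now take $f(\mathbf{x})=g(x_3)$; decomposing by central characters gives $T_\sigma f(\mathbf{x})=(T_mg)(x_3)$. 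Haar measure is the product measure and $\|\mathbf{x}\|_{p,sub}$ does not depend on $x_3$. So $T_\sigma$ is unbounded on $L^r(\mathbb{H}_1)$, and on $L^r_{\alpha,sub}(\mathbb{H}_1)$ for every $\alpha>-\kappa$.

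Thus, with $\eta$ horizontal (your reading, and the one the paper's proof uses), (2) and (3) are false as stated. If instead $\eta$ ranges over all of $\widehat{\G}$, then $\|\eta\|_p\ge1$ and $\kappa<d$ make the hypothesis of (2) imply that of Theorem \ref{TeoMultiplierLr}(1). In that case (2) is just that theorem, and horizontal characters play no role.

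In (1), the weight equivalence and the product-rule reduction are fine and match the paper. The paper's final step simply asserts $\sum_{\xi}d_\xi\|\xi\|_p^{-\alpha}\|\widehat f(\xi)\|_{HS}^2\lesssim\|f\|_{L^2_{\alpha,sub}(\G)}^2$. You correctly single out this Pitt-type inequality as the crux, but you do not prove it, and your fallback points the wrong way. Since $\|\xi\|_{p,sub}\le\|\xi\|_p$, putting $\|\xi\|_{p,sub}^{-\alpha}$ on the left gives a stronger inequality, not a weaker one. Also, any exponent loss would not yield (1) as stated.

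Finally, transferring Lemma \ref{lemaweak11} to the measure $\|\mathbf{x}\|_{p,sub}^{\alpha}\,d\mathbf{x}$ needs more than Proposition \ref{promeasurewighted}. Its proof compares $|x\star g_j|_{\mathcal{G}}$ with $|g_j|_{\mathcal{G}}$ using the ultrametric structure. That comparison breaks down for a weight that is singular (for $\alpha<0$) along the whole subgroup $[\G,\G]$ rather than only at $e$. Moreover, $\|\mathbf{x}\|_{p,sub}^{\alpha}$ is not locally integrable for $\alpha\le-\kappa$, so your own restriction $\alpha>-\kappa$ already conflicts with the range $-d\le\alpha$ claimed in (3).
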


\begin{proof}

\quad

\textbf{Part 1:} Similar to the proof of Theorem \ref{teomultL2v1}, using the product rule we get to the estimation 
\begin{align*}
\quad & \|T_\sigma f \|^2_{L^2_{\alpha. sub}(\mathbb{G})} 
\lesssim \sup_{[\xi]\in \widehat{\mathbb{G}}} \|\sigma(\xi)\|^2_{op}\,\|f\|^2_{L^2_{\alpha, sub}(\mathbb{G})}
+ \sum_{[\xi]\in \widehat{\mathbb{G}}}
\sum_{\{ \eta \in \widehat{\Z}_p^d \, : \, \eta_{\kappa+1} = ...= \eta_d = 1 \}}
d_\xi \,\| \eta \|_p^{-(\alpha + \kappa)}
\|\Delta_\eta \sigma(\xi)\|^2_{op}\,\|\hat{f}(\xi)\|^2_{HS} .
\end{align*}
For the right hand side, notice how \begin{align*}
    \sum_{[\xi]\in \widehat{\mathbb{G}}}
\sum_{\{ \| \eta \|_p < \| \xi \|_p  \, : \, \eta_{\kappa+1} = ...= \eta_d = 1 \}}&
d_\xi \,\| \eta \|_p^{-(\alpha + \kappa)}
\|\Delta_\eta \sigma(\xi)\|^2_{op}\,\|\hat{f}(\xi)\|^2_{HS} \\ &= \sum_{[\xi]\in \widehat{\mathbb{G}}}
\sum_{\{ \| \eta \|_p < \| \xi \|_p  \, : \, \eta_{\kappa+1} = ...= \eta_d = 1 \}}
d_\xi \,
\|\Delta_\eta^{\frac{\alpha + \kappa}{2}} \sigma(\xi)\|^2_{op}\,\|\hat{f}(\xi)\|^2_{HS} \\ &\lesssim \sum_{[\xi]\in \widehat{\mathbb{G}}}
\sum_{\{ \| \eta \|_p < \| \xi \|_p  \, : \, \eta_{\kappa+1} = ...= \eta_d = 1 \}}
d_\xi \,
\| \xi  \|_p^{-(\alpha + \kappa)} \,\|\hat{f}(\xi)\|^2_{HS}
\\& \lesssim \sum_{[\xi]\in \widehat{\mathbb{G}}} d_\xi \,
\| \xi  \|_p^{- \alpha } \,\|\hat{f}(\xi)\|^2_{HS} \lesssim \| f \|_{L^2_{\alpha, sub}(\G)}^2.
\end{align*}

\textbf{Part 2 :}First, apply the Cauchy-Schwartz inequality:

\begin{align*}
    \int_{\G \setminus \G (p^\ell \Z_p)}|\check{\sigma}_{k,\mathbf{y}}(\mathbf{x})| d \mathbf{x} & \leq \Big(\int_{\G \setminus \G (p^\ell \Z_p)}|\check{\sigma}_{k,\mathbf{y}}(\mathbf{x})|^2 \| \mathbf{x}\|_{p, sub}^{2 \alpha_0} d \mathbf{x} \Big)^{1/2} \Big( \int_{\G \setminus \G (p^\ell \Z_p)}\| \mathbf{x} \|_{p, sub}^{-2 \alpha_0} d \mathbf{x} \Big)^{1/2}.
\end{align*}In one hand 
\begin{align*}
    \Big( \int_{\G \setminus \G (p^\ell \Z_p)}\| \mathbf{x} \|_{p, sub}^{-2 \alpha_0} d \mathbf{x} \Big)^{1/2} \leq \Big( \int_{(\Z_p^\kappa \setminus p^\ell \Z_p^\kappa) \times \Z_p^{d- \kappa}}\| \mathbf{x} \|_{p, sub}^{-2 \alpha_0} d \mathbf{x} \Big)^{1/2}  \lesssim p^{\ell(\alpha_0 - \frac{\kappa}{2})} \leq \| \mathbf{y} \|_p^{ \frac{\kappa}{2} - \alpha_0},
\end{align*}for any $\mathbf{y} \in \G (p^\ell \Z_p)$.  On the other hand, we know that 

$$\| \mathbf{x} \|_{p,sub}^\alpha \asymp \sum_{\{ \| \eta \|_p \leq \| y \|_p^{-1}  \, : \, \eta_{\kappa+1} = ...= \eta_d = 1 \}} |e^{2 \pi i \{ \mathbf{x} \cdot \eta \}_p}-1|^2 \| \eta \|_p^{-(\alpha + \kappa)},\quad \mathbf{y} \in  \G(p^\ell \Z_p), \, \mathbf{x} \in \G \setminus  \G(p^\ell \Z_p),$$
and therefore 
\begin{align*}    
\int_{\G \setminus \G(p^\ell \mathbb{Z}_p)}
|\check{\sigma}_{k,\mathbf{y}}(\mathbf{x})|^2
\|\mathbf{x}\|_{p, sub}^{2 \alpha_0}
\,d\mathbf{x}
&\lesssim
\sum_{\|\xi\|_p > \|\mathbf{y}\|_p^{-1}}
\sum_{\|\eta\|_p \le \|\mathbf{y}\|_p^{-1}}
d_\xi  
\|\eta\|_p^{-(2\alpha_0+\kappa)}
\|\Delta_\eta\sigma_k(\xi)\|_{HS}^2
\\&\leq  \sum_{\|\xi\|_p > \|\mathbf{y}\|_p^{-1}}
\sum_{\|\eta\|_p \le \|\mathbf{y}\|_p^{-1}}
d_\xi^2 
\|\Delta_\eta^{\alpha_0 + \frac{\kappa}{2}}\sigma_k(\xi)\|_{op}^2 \\ & \lesssim \sum_{\|\xi\|_p > \|\mathbf{y}\|_p^{-1}}
\sum_{\|\eta\|_p \le \|\mathbf{y}\|_p^{-1}}
d_\xi^2  
\|\xi \|_p^{-2\alpha_0 -d} \\ &\lesssim \|\mathbf{y}\|_p^{-\kappa} \sum_{\|\xi\|_p > \|\mathbf{y}\|_p^{-1}} d_\xi^2  
\|\xi \|_p^{-2\alpha_0 -d}  \lesssim \|\mathbf{y}\|_p^{2 \alpha_0 -\kappa} .
\end{align*}

\textbf{Part 3:} Assume $1<t<2$. Similarly, applying the H{\"o}lder inequality:

\begin{align*}
    \Big( \int_{\G \setminus \G (p^\ell \Z_p)}|\check{\sigma}_{k,\mathbf{y}}(\mathbf{x})|^t d \mathbf{x} \Big)^{1/t} & \leq \Big(\int_{\G \setminus \G (p^\ell \Z_p)}|\check{\sigma}_{k,\mathbf{y}}(\mathbf{x})|^2 \| \mathbf{x}\|_p^{2 \alpha_0} d \mathbf{x} \Big)^{1/2} \Big( \int_{\G \setminus \G (p^\ell \Z_p)}\| \mathbf{x} \|_p^{-\frac{2t \alpha_0}{2-t}} d \mathbf{x} \Big)^{\frac{2-t}{2t}}.
\end{align*}In one hand 
\begin{align*}
    \Big( \int_{\G \setminus \G (p^\ell \Z_p)}\| \mathbf{x} \|_p^{-\frac{2t \alpha_0}{2-t}} d \mathbf{x} \Big)^{\frac{2-t}{2t}} \lesssim p^{\ell(\alpha_0 - \kappa(\frac{1}{t}- \frac{1}{2}))} \leq \| \mathbf{y} \|_p^{ \kappa(\frac{1}{t}- \frac{1}{2}) - \alpha_0},
\end{align*}for any $\mathbf{y} \in \G (p^\ell \Z_p)$. On the other,  
\begin{align*}    
\int_{\G \setminus \G(p^\ell \mathbb{Z}_p)}
|\check{\sigma}_{k,\mathbf{y}}(\mathbf{x})|^2
\|\mathbf{x}\|_{p, sub}^{2 \alpha_0}
\,d\mathbf{x}
&\lesssim
\sum_{\|\xi\|_p > \|\mathbf{y}\|_p^{-1}}
\sum_{\|\eta\|_p \le \|\mathbf{y}\|_p^{-1}}
d_\xi  
\|\eta\|_p^{-(2\alpha_0+\kappa)}
\|\Delta_\eta\sigma_k(\xi)\|_{HS}^2
\\&\leq  \sum_{\|\xi\|_p > \|\mathbf{y}\|_p^{-1}}
\sum_{\|\eta\|_p \le \|\mathbf{y}\|_p^{-1}}
d_\xi^2 
\|\Delta_\eta^{\alpha_0 + \frac{\kappa}{2}}\sigma_k(\xi)\|_{op}^2 \\ & \lesssim \sum_{\|\xi\|_p > \|\mathbf{y}\|_p^{-1}}
\sum_{\|\eta\|_p \le \|\mathbf{y}\|_p^{-1}}
d_\xi^2  
\|\xi \|_p^{-2\alpha_0 -d - \kappa(\frac{1}{t} + \frac{1}{2})} \\ &\lesssim \|\mathbf{y}\|_p^{-\kappa} \sum_{\|\xi\|_p > \|\mathbf{y}\|_p^{-1}} d_\xi^2  
\|\xi \|_p^{-2\alpha_0 -d- \kappa(\frac{1}{t} + \frac{1}{2})}  \lesssim \|\mathbf{y}\|_p^{2 \alpha_0 -\kappa(\frac{1}{t} - \frac{1}{2})} .
\end{align*}

This concludes the proof.

\end{proof}

There are several advantages to the use of the above difference operators. For starters, they do not change the dimension of the representation space, and they are also easier to calculate. This comes in handy when studying the inverses of the following kind of operators.

\begin{defi}[Vladimirov sub-Laplacian]\label{defusublap}
Let $\mathbb{K}$ be a non-Archimedean local field with ring of integers $\mathcal{O}_{\mathbb{K}}$, prime ideal $\mathfrak{p} = p \mathcal{O}_{\mathbb{K}}$, and residue field $\mathbb{F}_q = \mathcal{O}_{\mathbb{K}} / p \mathcal{O}_{\mathbb{K}}$. Let $\mathfrak{g} = \operatorname{span}_{\mathcal{O}_{\mathbb{K}}}\{X_1,\dots,X_d\}$ be a nilpotent $\mathcal{O}_{\mathbb{K}}$-Lie algebra, and let $\mathbb{G}$ be the exponential image of $\mathfrak{g}$, so that $\mathbb{G}$ is a compact nilpotent $\mathbb{K}$-Lie group. Let $\{X_1,...,X_\kappa \}$ be a basis for $\g/[\g,\g]$.

We use the symbol $\D^{\alpha}_{sub}$ to denote the \emph{Vladimirov--Taibleson sub-Laplacian}, $\alpha>0$. defined by
\[
D_{sub}^{\alpha} f(x)
:=
\frac{1 - q^{\alpha}}{1 - q^{-(\alpha+\kappa)}}
\int_{\mathcal{O}_{\mathbb{K}}}
\frac{
f\big(\mathbf{x} \star \exp(x_1X_1 +...+x_\kappa X_\kappa)^{-1}\big) - f(\mathbf{x})
}{
\|(x_1,...,x_\kappa)\|_{\mathbb{K}}^{\alpha+\kappa}
}\, dx_1...dx_\kappa,
\, \,\;\; f \in \mathcal{D}(\mathbb{G}).
\]

Here $\mathcal{D}(\mathbb{G})$ denotes the space of smooth functions on $\mathbb{G}$, that is, the collection of locally constant functions with a fixed index of local constancy.
\end{defi}

In the particular case of the Heisenberg group we can compute 
\begin{align*}
    &\sigma_{D^\alpha_{sub}} (\xi)_{hh'}  = D^\alpha_{sub}(\pi_\xi(\mathbf{x}))_{hh'} |_{\mathbf{x}=\mathbf{e}} \\&= \frac{1-p^\alpha}{1-p^{-(\alpha + 2d)}}\int_{\Z_p^{2d}} \frac{e^{2\pi i \{x_1 \cdot \xi_1 + x_2 \cdot \xi_2 + \xi_3 h' \cdot x_2\}_p} \mathbb{1}_{h'-h + p^{-\vartheta(\xi_3)}\mathbb{Z}_p^d}(x_1) - \delta_{hh'}}{\|(x_1, x_2)\|_p^{\alpha + 2d}}dx_1 dx_2\\&=C_\alpha  \int_{\Z_p^{2d}} \frac{e^{2\pi i \{(x_1+u_1) \cdot \xi_1 + (x_2+u_2) \cdot (\xi_2 + \xi_3 h')\}_p} \mathbb{1}_{h'-h}(u_1+x_1) - e^{2\pi i \{u_1 \cdot \xi_1 + u_2 \cdot (\xi_2 + \xi_3 h')\}_p}\mathbb{1}_{h'-h }(u_1)}{\|(x_1, x_2)\|_p^{\alpha + 2d}}dx_1 dx_2 \big|_{u_1=u_2=0} \\ &= D^\alpha_{\Z_p^{2d}} (e^{2\pi i \{u_1 \cdot \xi_1 + u_2 \cdot (\xi_2 + \xi_3 h')\}_p}\mathbb{1}_{h'-h + p^{-\vartheta(\xi_3)}\mathbb{Z}_p^d}(u_1))|_{u_1=u_2=0}.
\end{align*}
Using the $\Z_p^d$-Fourier transform we can write $$\mathbb{1}_{h'-h + p^{-\vartheta(\xi_3)}\mathbb{Z}_p^d}(u_1) = \sum_{\| \tau \|_p \leq |\xi_3|_p} C_{hh'}(\tau) e^{2 \pi i \{ u_1 \cdot \tau  \}_p },$$and therefore \begin{align*}
    &D^\alpha_{\Z_p^{2d}} (e^{2\pi i \{u_1 \cdot \xi_1 + u_2 \cdot (\xi_2 + \xi_3 h')\}_p}\mathbb{1}_{h'-h + p^{-\vartheta(\xi_3)}\mathbb{Z}_p^d}(u_1)) \\ &= \sum_{\| \tau \|_p \leq |\xi_3|_p}  \Big( \|(\tau + \xi_1 , \xi_2 + \xi_3 h') \|_p^\alpha - \frac{1 - p^{-d}}{1 - p^{- (\alpha + d)}} \Big) C_{hh'}(\tau) e^{2 \pi i \{ u_1 \cdot (\tau+ \xi_1)+ u_2 \cdot ( \xi_2+ \xi_3 h' )  \}_p } 
\end{align*}
From this it is easy to deduce that $D^\alpha_{sub}$ has a non-trivial kernel, composed by all the central functions on $\mathbb{H}_d$: 
$$\mathrm{Ker}(D^\alpha_{sub}) = \mathrm{Span}_\C \{ e^{2 \pi i \{\lambda x_3 \}_p} \, : \, \lambda  \in \widehat{\Z}_p \},$$so the symbol $\sigma_{D^\alpha_{sub}} (\xi)$ has a one-dimensional kernel on each representation space. This means that $D^\alpha_{sub}$ is not invertible modulo constant functions, that is, there is no fundamental solution for $D^\alpha_{sub}$. And, even if we think on its inverse modulo kernel, it is still not clear if the resulting operator would be $L^r$-bounded. So, we consider instead the closely related operator $$\mathscr{L}^\alpha := D^\alpha_{sub} + \partial^\alpha_{X_3},$$where $$\partial^\alpha_{X_3}f(\mathbf{x}):= \frac{1-p^\alpha}{1-p^{-(\alpha + 1)}}\int_{\Z_p} \frac{f(\mathbf{x} + (0,0,t)) - f(\mathbf{x})}{|t|_p^{\alpha + 1}} dt,$$which is an alternative form of Vladimirov Laplacian. This operator is invertible (modulo constant functions) and its associated symbol satisfies for $\| \eta \|_p < \| \xi \|_p$: $$\Delta_\eta \sigma_{\mathscr{L}^\alpha} (\xi) = \mathbf{0}_{d_\xi}, \quad \eta_{\kappa+1}...=\eta_d =1,$$so we have the same conclusion for the inverse powers;   $$\Delta_\eta \sigma_{(\mathscr{L}^\alpha)^{- \beta}} (\xi) = \mathbf{0}_{d_\xi}, \quad \eta_{\kappa+1}...=\eta_d =1,$$showing how $(\mathscr{L}^\alpha)^{-\beta}$ defines a bounded operator on $L^r_{\alpha, sub} (\G)$ as a consequence of Theorem \ref{teosublaplacianspa}. More generally, any bounded function of $\mathscr{L}^\alpha$ defines a bounded operator on $L^r_{\alpha, sub}(\G)$.

\nocite{*}
\bibliographystyle{acm}
\bibliography{main}
\Addresses

\end{document}